\title[Two--chains and square roots of Thompson's group $F$]{Two--chains and square roots of Thompson's group $F$}
\author{Thomas Koberda}
\address{Department of Mathematics, University of Virginia, 
Charlottesville, VA 22904-4137, USA}
\email{thomas.koberda@gmail.com}
\urladdr{\href{http://faculty.virginia.edu/Koberda/}%
{http://faculty.virginia.edu/Koberda/}}
\author[Y. Lodha]{Yash Lodha}
\address{EPFL SB MATH EGG, Station 8, MA C3 584 (B\^atiment MA), Station 8,  CH-1015, Lausanne, Switzerland}
\email{yash.lodha@epfl.ch}
\urladdr{\href{https://people.epfl.ch/yash.lodha}{https://people.epfl.ch/yash.lodha}}
\let\@@enum@org\@@enum@
\def\@@enum@[#1]{\@@enum@org[\normalfont #1]}
\newtheorem{thm}{Theorem}[section]
\newtheorem{lem}[thm]{Lemma}
\newtheorem{cor}[thm]{Corollary}
\newtheorem{prop}[thm]{Proposition}
\theoremstyle{definition}
\newcommand\R{\mathbb{R}}
\newcommand\Z{\mathbb{Z}}
\newcommand\bR{\mathbb{R}}
\newcommand\bN{\mathbb{N}}
\newcommand\form[1]{\langle #1\rangle}
\DeclareMathOperator\Diff{Diff}
\newcommand\supp{\operatorname{supp}}
\newcommand\Homeo{\operatorname{Homeo}}
\newcommand\yh{\widehat}
\begin{document}

\date{\today}

\subjclass{}

\keywords{}

\begin{abstract}
We study two--generated subgroups $\langle f,g\rangle<\Homeo^+(I)$ such that $\langle f^2,g^2\rangle$ is isomorphic to Thompson's group $F$, and such that the supports of $f$ and $g$ form a chain of two intervals. We show that this class contains 
uncountably many isomorphism types. 
These include examples with nonabelian free subgroups, examples which do not admit faithful actions by $C^2$ diffeomorphisms on $1$--manifolds, examples which do not admit faithful actions by $PL$ homeomorphisms on an interval, and examples which are not finitely presented. We thus answer questions due to M. Brin. We also show that many relatively uncomplicated groups of homeomorphisms can have very complicated square roots, thus establishing the behavior of square roots of $F$ as part of a general phenomenon among subgroups of $\Homeo^+(I)$.
\end{abstract}
\maketitle

\section{Introduction}\label{sec:intro}

Thompson's group $F$ is a remarkable group of piecewise linear (abbreviated $PL$) homeomorphisms of the interval $I=[0,1]$ that occurs naturally and abundantly as a group of homeomorphisms of the real line, and that has been extensively studied since the 1970s.
The group $F$ has been shown to satisfy various exotic properties,
and has been proposed as a counterexample to well--known conjectures in group theory \cite{BrownGeoghegan,CFP1996,BieriStrebel16}.
Among the most well--known facts about Thompson's group $F$ are the following:

\begin{thm}[Brin--Squier,~\cite{BS1985}]\label{thm:brinsquier}
The group $F$ satisfies no law and contains no nonabelian free subgroups.
\end{thm}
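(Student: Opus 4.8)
I would treat the two assertions separately, since they exploit opposite features of piecewise linear homeomorphisms. For ``$F$ satisfies no law'' the plan is to produce, for every nontrivial word $w$ in the free group $F_n$ of rank $n$, a homomorphism $F_n\to F$ not killing $w$. Free groups are residually solvable, i.e.\ $\bigcap_{k\ge 1}F_n^{(k)}=\{1\}$, where $F_n^{(k)}$ denotes the $k$-th derived subgroup; hence there is $k$ with $w\notin F_n^{(k)}$, so $w$ has nontrivial image in the free solvable group $S=F_n/F_n^{(k)}$. By the Magnus embedding, iterated $k$ times, $S$ embeds into an iterated wreath product of finitely generated free abelian groups, so it suffices to realise such iterated wreath products inside $F$. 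This is routine, using only dyadic $PL$ maps: it is classical that $F$ contains $\Z\wr\Z$, and if $G\le F$ has support in a proper dyadic subinterval then a suitable ``shift'' $t\in F$ makes the $t$-translates of that subinterval pairwise disjoint (accumulating at the endpoints), whence $G\wr\Z\le F$; iterating embeds all the needed wreath products. Composing $F_n\twoheadrightarrow S\hookrightarrow F$ then shows $w$ is not a law.

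For ``no nonabelian free subgroup'' it is enough, since every nonabelian free group contains a copy of $F_2$, to show that for all $f,g\in PL_+(I)$ the group $H=\langle f,g\rangle$ satisfies a nontrivial relation; so assume $H$ is nonabelian, i.e.\ $w:=[f,g]\ne 1$. The essential input from the $PL$ hypothesis is that the germ of a $PL$ homeomorphism at an endpoint of $I$ is multiplication by its one-sided slope, so the germ groups at $0$ and at $1$ are abelian; consequently $w$ is the identity near $0$ and near $1$, and $\overline{\supp(w)}$ is a compact subset of $(0,1)$ with finitely many components (here I use that $PL$ maps have finitely many breakpoints). Let $[a,b]\subset(0,1)$ be its convex hull.

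Now I run a dichotomy. If some $v\in H$ pushes $a$ past $b$ (that is, $v(a)>b$, or symmetrically $v(b)<a$), then the intervals $v^i([a,b])$, $i\in\Z$, are pairwise disjoint, so the conjugates $w,\,vwv^{-1},\,v^2wv^{-2}$ have pairwise disjoint supports and hence pairwise commute: $[w,vwv^{-1}]=1$ in $H$. But $v$ cannot centralise $w$, since any element centralising $w$ preserves $\supp(w)$, hence its convex hull $[a,b]$, hence fixes $a$; and in a free group $[w,vwv^{-1}]=1$ forces $v$ to centralise $w$. So $H$ is not free. If instead no such $v$ exists, then (as then $v(a)\le b$ for every $v$) the number $c:=\sup\{v(a):v\in H\}$ lies in $[a,b]\subset(0,1)$ and is a global fixed point of $H$, because $h(c)=\sup_v h(v(a))=\sup_{v'}v'(a)=c$ for every $h\in H$ (and likewise $\inf\{v(b):v\in H\}$ is a global fixed point). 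Thus $H$ fixes an interior point $c$, so $H$ embeds into $PL_+[0,c]\times PL_+[c,1]$; since two nontrivial normal subgroups of a free group meet nontrivially, if $H$ were free then its restriction to $[0,c]$ or to $[c,1]$ would be faithful, giving again a rank-two free subgroup of $PL_+(I)$, on which some commutator still acts nontrivially with support having no more components than before. Repeating, with the number of components of the support of the chosen commutator as the quantity that must eventually drop, forces the first branch of the dichotomy, a contradiction.

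The hard part is precisely this last step: the bookkeeping needed to guarantee that the recursion in the second branch terminates, i.e.\ that one cannot indefinitely pass to smaller invariant subintervals without ever being able to shove a support off itself. This is exactly where the $PL$ hypothesis is indispensable --- finitely many breakpoints give finitely many components of any support, hence a well-founded complexity --- and indeed the analogous statement fails in $\Homeo_+(I)$, which contains free subgroups since $F_2$ is left-orderable. Organising the case analysis around the global fixed-point set of $H$ and making the termination precise is the technical core; the remaining ingredients are the elementary ``commuting conjugates'' observation and the germ computation.
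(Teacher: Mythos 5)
This theorem is quoted from Brin--Squier and not reproved in the paper, so the comparison below is against the known argument rather than a proof in the text.

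Your ``no free subgroup'' half is in the right spirit and does close up, but the termination step that you explicitly defer is where all of the content lives, and you should not leave it as a remark. Here is why it works. With $c=\sup\{v(a):v\in H\}$ as you define it, one cannot have $c=a$ or $c=b$: if $c=a$, say, then $f$ and $g$ both fix $a$, the one--sided germs of $f,g$ at $a$ are slopes, so the germ of $w=[f,g]$ at $a$ is trivial, which contradicts $a=\inf\supp(w)$. Hence $a<c<b$. Since $c$ is fixed by $w$, no component of $\supp(w)$ straddles $c$, and both $\supp(w)\cap(0,c)$ and $\supp(w)\cap(c,1)$ are nonempty because they contain the components touching $a$ and $b$ respectively; so after passing to the faithful restriction $\rho_i$, the number of components of the support of $[\rho_i(f),\rho_i(g)]=\rho_i(w)$ \emph{strictly} drops. (Also note the germ of $\rho_i(w)$ at $c$ is again trivial, so the new convex hull is a proper subinterval of $[a,c]$ or $[c,b]$.) With those details supplied the induction terminates. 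One small stylistic point: in the first branch you only need $[w,vwv^{-1}]=1$, so the third conjugate $v^2wv^{-2}$ is superfluous.

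The ``no law'' half has a genuine gap at the wreath--product step, and I do not think it is cosmetic. The iterated Magnus embedding places $F_n/F_n^{(k)}$ inside the \emph{right--nested} wreath product $\Z^n\wr\bigl(\Z^n\wr(\cdots\wr\Z^n)\bigr)$, in which the complicated quotient sits \emph{on top}, acting on a direct sum of copies of $\Z^n$. Your dynamical construction inside $F$ produces the opposite nesting: given $G\le F$ supported in a small dyadic interval and a shift $t$ with disjoint translates, you obtain $G\wr\Z=(\bigoplus_\Z G)\rtimes\Z$, i.e.\ the complicated group is the \emph{base}. Iterating yields only the left--nested towers $\bigl(\cdots(\Z\wr\Z)\wr\cdots\bigr)\wr\Z$; these embed in the right--nested towers but not conversely, so your construction reaches the smaller family. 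Building $\Z^n\wr Q$ by the analogous device with a nonabelian $Q\le F$ on top would require a free \emph{discrete} $Q$--orbit in $(0,1)$ together with a $Q$--equivariant family of disjoint intervals around it, and $F$ does not provide such an orbit for nonabelian $Q$; indeed, it is known that the solvable subgroups of $PL_+(I)$ are already constrained to live inside the left--nested towers, so it is not even clear that a free solvable group of rank $\ge 2$ embeds in $F$ at all. The phrase ``this is routine'' hides the actual difficulty. The argument can be repaired, but not by Magnus: one can instead invoke the classical fact that, for $m\ge 1$, the variety generated by the left--nested tower $G_m=\Z\wr\cdots\wr\Z$ ($m$ copies) is exactly the product variety $\mathfrak{A}^m$ of $m$--step solvable groups, so that any law $w$ holding in every $G_m$ lies in $\bigcap_m F_\infty^{(m)}=1$; alternatively, and closer to the original Brin--Squier argument, given a fixed nontrivial reduced word $w=x_{i_m}^{\varepsilon_m}\cdots x_{i_1}^{\varepsilon_1}$ one constructs dyadic $PL$ maps $f_1,\dots,f_n$ and dyadic points $p_0,\dots,p_m$ directly, demanding $f_{i_j}^{\varepsilon_j}(p_{j-1})=p_j$ and using reducedness of $w$ to keep the finitely many constraints on each $f_i$ consistent and order--preserving, so that $w(f_1,\dots,f_n)(p_0)=p_m\ne p_0$.
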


\begin{thm}[Ghys--Sergiescu,~\cite{GS1987}]\label{thm:ghyssergiescu}
The group $F$ admits a faithful action by $C^{\infty}$ diffeomorphisms of the circle.
\end{thm}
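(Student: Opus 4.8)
The plan is to upgrade the tautological faithful action of $F$ on $I=[0,1]$ --- the one in which every element is a $PL$ homeomorphism with all breakpoints at dyadic rationals, all slopes in $\{2^{n}:n\in\Z\}$, and which fixes $0$ and $1$ --- to a smooth action, by conjugating it, all at once, by a single homeomorphism $h$ of $I$. The goal is that every element of $hFh^{-1}$ be a diffeomorphism of $I$ that is $C^{\infty}$ on $(0,1)$ and infinitely tangent to the identity at $0$ and at $1$. Gluing $0$ to $1$ then turns this into a $C^{\infty}$ action of $F$ on $S^{1}$, the infinite tangency at the endpoints being exactly what makes the glued maps $C^{\infty}$ across the basepoint; and it is faithful, since conjugation by $h$ is an isomorphism onto its image and no nontrivial element of $F$ is supported on $\{0,1\}$. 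It is convenient to note first that the set of diffeomorphisms of $I$ which are $C^{\infty}$ on $(0,1)$ and infinitely tangent to $\id$ at the two endpoints forms a subgroup of $\Homeo^{+}(I)$; so it suffices to arrange this property for a generating pair $x_{0},x_{1}$ of $F$, and the whole group will follow.

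The homeomorphism $h$ is built from the dyadic combinatorics. Index the standard dyadic subintervals of $I$ by the vertices of the rooted binary tree $\mathcal{T}$, with the left and right halves of an interval as the children of its vertex; any summable assignment of positive lengths to the vertices of $\mathcal{T}$ in which the two children of a vertex share its length determines a new ``dyadic ruler'' on $I$, that is, a homeomorphism $h\colon I\to I$ sending each standard dyadic interval onto the corresponding interval of the new ruler. Such an $h$ is non-differentiable at every dyadic rational, and near such a point each of $x_{0}$ and $x_{1}$ is given by one or two affine pieces, each carrying a dyadic interval onto a dyadic interval with slope a power of $2$. The lengths must be chosen so that, transported through $h$, all of these germs become $C^{\infty}$ --- in fact flat, infinitely tangent to $\id$ when the point is fixed, and to a smooth germ between disjoint cells otherwise. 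The way to do this is to let the ratios of child-length to parent-length and of sibling lengths converge, super-exponentially fast in the depth of the tree, to the limiting values forced by the combinatorics, so that every finite-order Taylor discrepancy produced by a power-of-$2$ slope (a genuine corner, or merely the mismatch of $h$'s scales at a non-breakpoint and its image) vanishes to infinite order --- this is the standard reparametrization/flattening argument, performed uniformly over $\mathcal{T}$. The same rapid shrinking of cells near the two ends of $I$, which are the accumulation loci of all breakpoints, forces the conjugated generators to be infinitely tangent to the identity there.

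The heart of the matter is this single simultaneous choice: one ruler $h$ must flatten the power-of-$2$ slopes of \emph{both} generators at \emph{every} dyadic rational, while also yielding infinite tangency to $\id$ at the two endpoints, which are themselves limits of breakpoints; since different elements break at different dyadic points and with different local profiles, the length assignment has to be chosen with enough uniformity to dominate the worst local configuration occurring anywhere in $\mathcal{T}$, and getting that uniformity right is the main technical obstacle. Once $h$ is fixed, the rest is routine: I would verify by the standard flattening estimates that $hx_{0}h^{-1}$ and $hx_{1}h^{-1}$ are $C^{\infty}$ on $(0,1)$ and infinitely tangent to $\id$ at $0$ and $1$, invoke the subgroup remark to conclude the same for every element of $hFh^{-1}$, descend to $S^{1}$, and record faithfulness as above. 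This is in essence the construction of Ghys and Sergiescu~\cite{GS1987}.
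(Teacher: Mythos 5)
The paper states this result as a citation to Ghys--Sergiescu~\cite{GS1987}; no proof is given, so there is no internal argument to compare against.

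On its own terms, your sketch has the right shape: conjugating the standard PL action on $I$ by a single homeomorphism $h$ so as to land in the group of diffeomorphisms that are $C^\infty$ on $(0,1)$ and infinitely flat at the endpoints, then gluing $0$ to $1$, is indeed the structure of the Ghys--Sergiescu argument (they prove the sharper statement that Thompson's group $T$ is conjugate in $\Homeo^+(S^1)$ to a subgroup of $\Diff^\infty(S^1)$, from which the claim for $F$ is immediate). Your preliminary observations --- that such flat diffeomorphisms form a subgroup, that it therefore suffices to smooth a generating pair, and that faithfulness survives gluing because no nontrivial element of $F$ is supported on $\{0,1\}$ --- are all correct.

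The gap is that the central construction is not carried out. ``Let the ratios of child-length to parent-length converge super-exponentially fast to the limiting values forced by the combinatorics, so that every finite-order Taylor discrepancy vanishes'' names the conclusion you want without giving a reason it can be arranged: you never write down the system of constraints these ratios must satisfy, never argue the system is consistent, and never explain why super-exponential decay is the right rate. Two concrete difficulties worth naming: \emph{(i)} at a dyadic fixed point of a generator --- e.g.\ $0$ for $x_0$, where the PL germ is $y\mapsto y/2$ --- the conjugacy must convert a hyperbolic germ of derivative $1/2$ into one infinitely tangent to the identity; this is possible, but it forces $h$ to carry a very specific iterated-logarithmic singularity there (for instance $h^{-1}(z)=\exp(-\exp(1/z))$ near $0$ does the job, giving $hx_0h^{-1}(z)=z-(\log 2)z^2e^{-1/z}+\cdots$), and nothing in ``rapid shrinking of cells'' automatically produces a cusp of exactly this shape; \emph{(ii)} at the breakpoint $1/2$ the two generators have corners of \emph{opposite} type (slope ratio $2$ for $x_0$, slope ratio $1/2$ for $x_1$), so whatever singularity $h$ is given at $1/2$ must simultaneously cancel both, coupled moreover with the behavior of $h$ at the image points $x_0(1/2)=1/4$ and $x_1(1/2)=1/2$ and so on through the entire orbit of $1/2$ --- which is all of the dyadic rationals in $(0,1)$. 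Solving this globally coupled system with one choice of $h$ is precisely the content of the Ghys--Sergiescu paper; you flag it as ``the main technical obstacle'' and then stop. As a reading of the strategy your write-up is accurate, but as a proof it has a genuine gap at exactly the point you acknowledge.
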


\begin{thm}[Thompson, see~\cite{CFP96}]\label{thm:fp}
The group $F$ is finitely presented.
\end{thm}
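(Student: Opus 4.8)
The plan is to obtain a finite presentation of $F$ directly from the standard \emph{infinite} presentation by a sequence of Tietze transformations. Realize $F$ as the group of orientation-preserving $PL$ homeomorphisms of $[0,1]$ with dyadic breakpoints and slopes in $\{2^{k}:k\in\Z\}$, and let $x_{0},x_{1},x_{2},\dots$ be the standard generators (so that, up to normalization, $x_{n}$ is supported on $[1-2^{-n},1]$). One checks the relations $x_{j}^{-1}x_{i}x_{j}=x_{i+1}$ for $0\le j<i$, and, using the normal form for $F$ provided by reduced tree-pair diagrams, that
\[
F=\langle\, x_{0},x_{1},x_{2},\dots\ \mid\ x_{j}^{-1}x_{i}x_{j}=x_{i+1}\ \ (0\le j<i)\,\rangle .
\]
I would either take this infinite presentation as the classical starting point or first establish it by the tree-pair calculus; below I treat it as known.

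From the relations $x_{0}^{-1}x_{i}x_{0}=x_{i+1}$ with $i\ge1$ one gets $x_{n}=x_{0}^{-(n-1)}x_{1}x_{0}^{\,n-1}$ for all $n\ge2$. Using these identities I would perform Tietze transformations eliminating the generators $x_{2},x_{3},\dots$: after the substitution, each relation $x_{0}^{-1}x_{i}x_{0}=x_{i+1}$ with $i\ge2$ becomes freely trivial and may be deleted, the relation $x_{0}^{-1}x_{1}x_{0}=x_{2}$ is consumed by the elimination of $x_{2}$, and one is left with the two-generator presentation
\[
F=\langle\, x_{0},x_{1}\ \mid\ x_{j}^{-1}x_{i}x_{j}=x_{i+1}\ \ (1\le j<i)\,\rangle ,
\]
in which $x_{i}$ now abbreviates the word $x_{0}^{-(i-1)}x_{1}x_{0}^{\,i-1}$. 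There are still infinitely many relators.

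The core of the argument is collapsing this family to finitely many. Writing $R_{i,j}$ for the relation $x_{j}^{-1}x_{i}x_{j}=x_{i+1}$, the key point is an \emph{index shift}: conjugating $R_{i,j}$ by $x_{0}$ and using the (now definitional) identity $x_{0}^{-1}x_{m}x_{0}=x_{m+1}$, valid for $m\ge1$, produces $R_{i+1,j+1}$, and conjugating by $x_{0}^{-1}$ reverses this provided $j\ge1$. Hence for $j\ge1$ the relation $R_{i,j}$ is equivalent to $R_{i-j+1,\,1}$, so the whole family is equivalent to $\{R_{m,1}:m\ge2\}$. I would then show by induction on $k$ that $R_{2,1}$ and $R_{3,1}$ together imply $R_{m+k,\,m}$ for every $m\ge1$ and $k\ge1$: the cases $k=1$ and $k=2$ are index shifts of $R_{2,1}$ and $R_{3,1}$, and for $k\ge3$ one rewrites $x_{m}^{-1}x_{m+k}x_{m}$ using relations $R_{*,*}$ of strictly smaller index gap together with the commutation $x_{m+1}x_{m}=x_{m}x_{m+2}$ read off from $R_{m+1,m}$. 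This recovers all $R_{i,j}$ with $1\le j<i$, so that
\[
F=\langle\, x_{0},x_{1}\ \mid\ R_{2,1},\ R_{3,1}\,\rangle ,
\]
which after rewriting is the familiar two-relator presentation $\langle x_{0},x_{1}\mid[x_{0}x_{1}^{-1},x_{0}^{-1}x_{1}x_{0}],\ [x_{0}x_{1}^{-1},x_{0}^{-2}x_{1}x_{0}^{2}]\rangle$.

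I expect the main obstacle to be bookkeeping rather than conceptual depth: one must verify that the Tietze reductions discard no essential relator, that each index shift stays within the range $j\ge1$ where it is reversible, and that the final induction genuinely closes (every relator invoked in the step for $k$ has index gap in $\{1,k-2,k-1\}$, all strictly less than $k$ once $k\ge3$, and always with first subscript $\ge1$). A secondary point, if one does not take the infinite presentation as given, is that justifying it requires the uniqueness part of the tree-pair normal form for $F$. Alternatively one could bypass these combinatorics entirely and derive the stronger statement that $F$ is of type $F_{\infty}$ via Brown's finiteness criterion applied to a contractible complex on which $F$ acts (the Brown--Stein complex, or Farley's $\mathrm{CAT}(0)$ cube complex), at the cost of considerably heavier machinery.
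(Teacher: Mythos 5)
The paper does not actually prove this theorem---it attributes it to Thompson and cites \cite{CFP96}---and your argument is precisely the classical Cannon--Floyd--Parry derivation: start from the infinite presentation with relations $x_j^{-1}x_ix_j=x_{i+1}$ for $0\le j<i$ (justified via tree-pair normal forms), eliminate $x_n$ for $n\ge 2$ by Tietze moves, and collapse the remaining relators to $R_{2,1}$ and $R_{3,1}$ by the index-shift induction (whose bookkeeping, gaps $\{1,k-2,k-1\}$ with all second indices $\ge 1$, does close as you claim). Your outline is correct, and the two-relator presentation you land on, $\langle x_0,x_1\mid [x_0x_1^{-1},x_0^{-1}x_1x_0],\,[x_0x_1^{-1},x_0^{-2}x_1x_0^{2}]\rangle$, is exactly the paper's second displayed presentation with $a=x_0$, $b=x_1$.
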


\begin{thm}[See~\cite{CFP96,BurilloBook,Higman,HigmanBook,Brown1985}]\label{thm:simple}
The commutator subgroup of $F$ is an infinite simple group.
\end{thm}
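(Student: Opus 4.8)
The plan is to combine the standard identification of $[F,F]$ as a group of interval homeomorphisms with a commutator--spreading argument of the kind used by Higman and by Epstein for commutator subgroups of homeomorphism groups. First I would record the preliminaries. Since the abelianization of $F$ is $\Z^2$, the two coordinates recording the logarithms of the one--sided slopes of an element at $0$ and at $1$, the subgroup $[F,F]$ is exactly the set of $f\in F$ that are the identity on a neighborhood of $0$ and on a neighborhood of $1$; equivalently, writing $F_J\le F$ for the subgroup of elements supported in a dyadic interval $J$, one has $[F,F]=\bigcup_{n}F_{[2^{-n},\,1-2^{-n}]}$, an increasing union of copies of $F$. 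This already shows $[F,F]$ is infinite, and it reduces simplicity to the following: given a nontrivial normal subgroup $N\trianglelefteq[F,F]$ and a fixed $1\neq f\in N$, show that $F_{[2^{-n},1-2^{-n}]}\le N$ for every $n$.

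The first step is to place $F_L\le N$ for some dyadic interval $L$ lying compactly inside $(0,1)$. Since $f$ is a nontrivial orientation--preserving $PL$ homeomorphism it moves some interior point of one of its linearity domains, and a short continuity estimate then produces a dyadic interval $J$, small enough to lie in a single linearity domain of $f$ and compactly inside $(0,1)$, with $f(J)\cap J=\emptyset$; then $f(J)$ is again a dyadic interval, $fF_Jf^{-1}=F_{f(J)}$, and $F_J,F_{f(J)}\le[F,F]$. Now fix $h\in F_J$ and $p\in F_{f(J)}$ and put $u=fhf^{-1}\in F_{f(J)}$. The commutator $[f,h]=u\,h^{-1}$ lies in $N$, as it equals $f\cdot(hf^{-1}h^{-1})$, a product of two elements of $N$; hence $p\,[f,h]\,p^{-1}\in N$ as well, and since $p$ and $h$ have disjoint supports a direct computation gives $[f,h]^{-1}\,p\,[f,h]\,p^{-1}=[u^{-1},p]$, an element of $N$ supported in $f(J)$. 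As $h$ ranges over $F_J$ the element $u^{-1}$ ranges over all of $F_{f(J)}$, so these commutators generate $[F_{f(J)},F_{f(J)}]$; therefore $[F_{f(J)},F_{f(J)}]\le N$, and since $F_{f(J)}\cong F$ this subgroup contains $F_L$ for every dyadic interval $L$ lying compactly inside $f(J)$. Fix one such $L$.

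The second step is routine spreading. For any $n$, the flexibility of $F$ on the dyadic rationals of the open interval yields $g\in[F,F]$ with $g(L)\supseteq[2^{-n},1-2^{-n}]$; then $gF_Lg^{-1}=F_{g(L)}\supseteq F_{[2^{-n},1-2^{-n}]}$, and since $N$ is normal in $[F,F]$ and contains $F_L$ it contains $F_{[2^{-n},1-2^{-n}]}$. Taking the union over $n$ gives $[F,F]\le N$, so $N=[F,F]$ and $[F,F]$ is simple.

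I expect the only real obstacle to be the first step --- extracting from the bare hypothesis $f\neq 1$ a clean nontrivial element of $N$ supported in a single dyadic interval. Two points need care: the choice of $J$, which must simultaneously lie in one linearity domain of $f$ (so that $f(J)$ is dyadic), sit compactly inside $(0,1)$, and be disjoint from its image; and the bookkeeping of which manipulations stay inside $N$, using only that $N$ is normalized by $[F,F]$ and not by all of $F$. The spreading step and the abelianization computation are entirely standard.
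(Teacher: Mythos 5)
The paper does not prove Theorem~\ref{thm:simple}; it records it as a classical fact and cites it to Higman, Brown, and the Cannon--Floyd--Parry notes, so there is no in-paper argument to compare against. Your proof is correct and is essentially the standard Higman/Epstein ``double commutator plus spreading'' argument that appears (in one form or another) in those references: identify $[F,F]$ with the germ-trivial elements via the $\Z^2$ abelianization, pick a nontrivial $f\in N$ and a small dyadic interval $J$ in a linearity domain of $f$ with $f(J)\cap J=\emptyset$, use the identity $[f,h]^{-1}\,p\,[f,h]\,p^{-1}=[u^{-1},p]$ (valid because $h$ and $u,p$ have disjoint supports) to deposit all of $[F_{f(J)},F_{f(J)}]$ into $N$, and then push it out to exhaust $[F,F]$ by conjugating by germ-trivial elements; the bookkeeping of which conjugators may be used (only elements of $[F,F]$, not of $F$) is handled correctly.
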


In this article, we study a certain class of groups which we call \emph{square roots of Thompson's group $F$}. 
These are two--generated subgroups $\langle f,g\rangle <\Homeo^+(I)$ of the group of orientation-preserving homeomorphisms of the interval, which satisfy 
\[\langle f^2,g^2\rangle\cong \form{A,B \mid [A, (AB)^{-k}B(AB)^k]\text{ for }k\in\{1,2\}}
\cong F,\] and for which the supports $\supp f$ and $\supp g$ of $f$ and $g$ respectively form a \emph{two--chain} of intervals. That is, $\supp f$ and $\supp g$ are both open intervals, and the intersection $\supp f\cap \supp g$ is a proper subinterval of both $\supp f$ and $\supp g$.

Among other things, we demonstrate that (the second part of) Theorem~\ref{thm:brinsquier}, Theorem~\ref{thm:ghyssergiescu}, and Theorem \ref{thm:fp} all fail for square roots of $F$.
In particular, we show that there are square roots of $F$ which contain nonabelian free subgroups, that there are square roots of $F$ which do not admit faithful actions by $C^2$ diffeomorphisms on the interval, circle, or real line, and that there are uncountably many isomorphism types of square roots of $F$.

%%\cong \form{A,B \mid [AB^{-1}, A^{-k}BA^k]\text{ for }k\in\{1,2\}}.\]

\subsection{Main results}
We denote the set of isomorphism classes of square roots of $F$ by $\mathcal{S}$. The goal of this paper is to produce interesting elements of $\mathcal{S}$. Note that $\mathcal{S}$ contains $F$ for example, since squaring the generators in the standard presentation for $F$ as given in the previous subsection results in a group isomorphic to $F$.

In this article we use two different finite presentations of the group $F$.
The first presentation, which was mentioned in the previous section is:
\[\form{A,B \mid [A, (AB)^{-1}B(AB)], [A, (AB)^{-2}B(AB)^2]}
\cong F.\]
The second presentation is obtained by performing a Tietze transformation to produce generators $a=AB,\, b=B$, and is given by:

\[\form{a,b \mid [ab^{-1}, a^{-1}ba], [ab^{-1}, a^{-2}ba^2]}
\cong F.\]

Next, we describe a certain subgroup $P$ of $F$, which will be needed to state and prove our results.
We fix two copies of $F$: \[F_1=\form{p_1,p_2 \mid [p_1p_2^{-1}, p_1^{-1}p_2p_1],[p_1p_2^{-1}, p_1^{-2}p_2p_1^2]},\] 
\[F_2=\form{q_1,q_2 \mid [q_1q_2^{-1}, q_1^{-1}q_2q_1],[q_1q_2^{-1}, q_1^{-2}q_2q_1^2]}.\]
We will write $P$ for the subgroup of $F_1\times F_2$ generated by $(p_1,q_2)$ and $(p_2,q_1)$.
Note that the group $P$ is isomorphic to a subgroup of $F$ which itself contains an isomorphic copy of $F$ as a subgroup.
The fact that $P$ is isomorphic to a subgroup of $F$ is an elementary exercise that we leave to the reader, and that $F$ is isomorphic to a
subgroup of $P$ is a direct consequence of Brin's Ubiquity Theorem (see~\cite{BrinJLMS99}). 

We denote the free group on two generators by ${\bf F_2}$, and we call a group $H=\langle h_1,h_2\rangle$ a \emph{marked extension} of $P$ if there exists a surjective homomorphism $H\to P$, where \[h_1\mapsto (p_1,q_2),\,h_2\mapsto (p_2,q_1).\] 
Even though the map $H\to P$ may be suppressed from the notation, we always think of a marked extension of $P$ as equipped with such a homomorphism. 

A (countable) group is \emph{left orderable} if it admits a left invariant total ordering,
or equivalently if it admits a faithful action by orientation preserving homeomorphisms of the real line
(see Proposition 1.1.8 of~\cite{DeroinNavasRivas} or Theorem 2.2.19 of~\cite{Navas2011}). 
Our main result is the following:

\begin{thm}\label{thm:main}
Let $H$ be a marked, left orderable extension of $P$. 
Then there exists a square root of Thompson's group $G\in\mathcal{S}$ such that $H<G$.
\end{thm}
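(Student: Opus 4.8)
My plan is to realize the marked left orderable extension $H=\langle h_1,h_2\rangle$ of $P$ as a subgroup of a square root of $F$ by building explicit homeomorphisms of $I$ whose supports form a two--chain. First I would set up the configuration: pick three consecutive overlapping intervals $J_1, J_2, J_3$ inside $I$, and arrange that the intended generators $f$ and $g$ of the square root $G$ have $\supp f = J_1 \cup J_2$ (roughly) and $\supp g = J_2 \cup J_3$, so that $\supp f \cap \supp g$ is a proper subinterval of each. On the ``outer'' pieces $J_1$ and $J_3$, I want $f$ and $g$ to look like the generators of the two copies $F_1$ and $F_2$ of Thompson's group appearing in the definition of $P\le F_1\times F_2$; on the central overlap $J_2$, I want to insert an action of $H$ coming from the left orderability hypothesis, i.e. a faithful action of $H$ on the interval $J_2$ by orientation--preserving homeomorphisms (which exists by the stated equivalence for left orderable groups), with $h_1, h_2$ supported there. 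The homeomorphisms $f,g$ are then defined piecewise: $f$ acts as the appropriate $F_1$--generator on $J_1$, as the image of $h_1$ on $J_2$, and trivially on $J_3$ (and dually for $g$), with care taken on the overlaps so that everything patches to an honest homeomorphism of $I$.

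The next step is to verify that $\langle f^2, g^2\rangle \cong F$. Here the point of squaring is that on the central interval $J_2$ the squares $f^2$ and $g^2$ map to $h_1^2, h_2^2$, which surject onto $(p_1^2, q_2^2)$ and $(p_2^2, q_1^2)$ in $P \le F_1 \times F_2$; combined with the standard $F_1$-- and $F_2$--actions on $J_1$ and $J_3$, one shows that $\langle f^2, g^2\rangle$ is precisely (a conjugate of) the diagonal-type copy of $F$ sitting inside $F_1 \times F_2 \times (\text{stuff on } J_2)$ determined by $P$, and that this is isomorphic to $F$ because $P$ contains $F$ by Brin's Ubiquity Theorem and maps onto a copy of $F$. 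The relations $[ab^{-1},a^{-1}ba]=[ab^{-1},a^{-2}ba^2]=1$ (in the $a=f^2 g^2$, $b=g^2$ coordinates, say) must be checked to hold and to be a full set of relations; the ``outer'' coordinates force exactly the $F$--relations, and the central coordinate contributes nothing obstructive since the squares there are controlled by the surjection $H \to P$.

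Finally, I would check that $H < G$ honestly, i.e. that $f, g$ generate a group containing a faithful copy of $H$. Restricting the $\langle f,g\rangle$--action to the central interval $J_2$ gives a homomorphism $\langle f,g\rangle \to \Homeo^+(J_2)$ sending $f \mapsto h_1$, $g\mapsto h_2$; since the $H$--action on $J_2$ was chosen faithful, the subgroup of $G$ generated by $f$ and $g$ surjects onto $H$, and a short argument (using that the outer data only adds Thompson-type behavior that does not create extra collapsing) shows there is in fact a copy of $H$ inside $G$ — most cleanly by arranging the construction so that $H$ itself, acting on $J_2$ alone with $f,g$ trivial elsewhere, embeds. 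The main obstacle I anticipate is precisely this patching-and-faithfulness bookkeeping: one must simultaneously (i) keep the supports a genuine two--chain, (ii) not accidentally impose relations among $f,g$ beyond those of the target group, and (iii) ensure the squares land exactly on the $P$--generators so that $\langle f^2,g^2\rangle$ is $F$ on the nose rather than some quotient or overgroup. Getting the overlap regions right — deciding what $f$ and $g$ do on $J_1\cap J_2$ and $J_2\cap J_3$ so that the pieces are mutually compatible while the squares still behave — is where the real work lies; the left orderability hypothesis is used exactly once, to produce the faithful $H$--action on $J_2$ that seeds the whole construction.
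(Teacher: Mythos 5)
There is a genuine gap here, one you in fact flag yourself in your final paragraph but do not resolve, and it sinks the proposed configuration. If you define $f$ and $g$ so that their restrictions to the central overlap $J_2$ are (dynamical realizations of) $h_1$ and $h_2$, two things go wrong. First, since $h_1$ is a self-homeomorphism of $J_2$, this forces $f$ to fix both endpoints of $J_2$, so $\supp f$ is no longer a single interval and the chain condition is already violated. Second, and more seriously, $\langle f^2,g^2\rangle$ restricted to $J_2$ is $\langle h_1^2,h_2^2\rangle$; for $\langle f^2,g^2\rangle$ to be isomorphic to $F$, which has no proper nonabelian quotients, every $F$--relation must be satisfied on every component of support, in particular on $J_2$. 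For a generic marked left orderable extension $H$ of $P$ — most visibly $H={\bf F_2}$ — a relation such as $[h_1^2h_2^{-2},\,h_1^{-2}h_2^2h_1^2]=1$ simply fails, so $\langle f^2,g^2\rangle\not\cong F$. The surjection $H\to P$ gives you no purchase here: it controls the image of $h_1,h_2$ in $P$, not the subgroup $\langle h_1^2,h_2^2\rangle\le H$ appearing on $J_2$, and squaring the $h_i$ does not ``land on'' the $P$--generators in any useful sense.

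The paper's construction avoids both problems by not inserting the $H$--action into the generators at all. It starts from an honest chain pair $f,g$ whose squares satisfy Lemma~\ref{lem:dyn criterion}, realizes $h_1,h_2$ on a single small subinterval $J_6$ deep inside the overlap, sets $h_3=f^{-1}h_2f=g^{-1}h_2g$ on the disjoint translate $J_{10}$, and then defines $\lambda_2=g$, $\lambda_1=h_1^{-1}h_3^{-1}f$. Because the correction factors live on tiny intervals that $f$ and $g$ push off themselves, $\lambda_1^2$ and $\lambda_2^2$ still satisfy the dynamical hypothesis of Lemma~\ref{lem:dyn criterion} and generate a genuine copy of $F$; this is where the squaring is actually used. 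The $H$--structure instead emerges on the two products $\lambda_2\lambda_1^{-1}$ and $\lambda_1^{-1}\lambda_2$, whose restrictions to the various components of support realize, respectively, a faithful dynamical realization of $H$ on $J_{10}$, a $\Z$--quotient on $J_6$, and the quotient $H\to P$ on the two outer components (via the nested--interval criteria of Lemmas~\ref{nestedL} and~\ref{nestedR}). Since each restriction is a quotient of $H$ and one is faithful, the subgroup they generate is exactly $H$. This separation of roles — $F$--dynamics carried by the squares, $H$--structure carried by the ratios $\lambda_2\lambda_1^{-1}$ and $\lambda_1^{-1}\lambda_2$ — is the key idea missing from your proposal.
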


Since the free group $\bf F_2$ is left orderable and is naturally a marked extension of $P$, we immediately obtain the following:

\begin{cor}\label{cor:free}
There exists a square root $G\in\mathcal{S}$ such that ${\bf F_2}<G$.
\end{cor}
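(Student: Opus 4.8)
The plan is to obtain this as an immediate application of Theorem~\ref{thm:main}: once that theorem is available, all that remains is to verify that the free group ${\bf F_2}$, suitably marked, satisfies its hypotheses, namely that it is a \emph{marked, left orderable extension of $P$}.

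First I would record that $P$ is, by its very definition, generated by the two elements $(p_1,q_2)$ and $(p_2,q_1)$ of $F_1\times F_2$. Writing ${\bf F_2}=\form{h_1,h_2}$ for the free group of rank two, it follows that the unique homomorphism $\phi\colon{\bf F_2}\to P$ with $\phi(h_1)=(p_1,q_2)$ and $\phi(h_2)=(p_2,q_1)$ is surjective; equipping ${\bf F_2}$ with this $\phi$ exhibits it as a marked extension of $P$ in the sense defined above. Second, I would invoke the classical fact that free groups are left orderable --- indeed bi-orderable, via the Magnus embedding into the group of units of a ring of formal power series in noncommuting variables, or equivalently because ${\bf F_2}$ admits a faithful action by orientation-preserving homeomorphisms of $\R$ (for instance a ping-pong pair in $\Homeo^+(\R)$), which is the same thing by the criterion already cited (Proposition~1.1.8 of~\cite{DeroinNavasRivas}).

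With both points in place, ${\bf F_2}$ equipped with the marking $\phi$ is a marked, left orderable extension of $P$, so Theorem~\ref{thm:main} applies with $H={\bf F_2}$ and produces a square root $G\in\mathcal{S}$ of Thompson's group $F$ with ${\bf F_2}<G$, which is precisely the assertion. There is no genuine obstacle remaining at this stage: the entire substance has been absorbed into Theorem~\ref{thm:main}, and the two facts needed here --- that $P$ is two--generated and that ${\bf F_2}$ is left orderable --- are respectively immediate from the definition of $P$ and standard. The only point one might wish to unpack is the concrete embedding ${\bf F_2}\hookrightarrow G$ provided by Theorem~\ref{thm:main}, but that belongs to the proof of that theorem rather than to the corollary.
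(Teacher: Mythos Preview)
Your proposal is correct and matches the paper's own argument essentially verbatim: the paper also derives the corollary immediately from Theorem~\ref{thm:main} by noting that ${\bf F_2}$ is left orderable and is naturally a marked extension of $P$. You have simply spelled out those two observations in slightly more detail than the paper does.
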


We will show that square roots of $F$ can contain torsion--free nilpotent groups of arbitrary nilpotence degree. 
As a consequence of Theorem \ref{thm:main} and the Plante--Thurston Theorem~\cite{PT1976}, we have the following:

\begin{cor}\label{cor:smooth}
There exists a square root $G\in\mathcal{S}$ such that $G$ does not admit a faithful action by $C^2$ diffeomorphisms on a compact one--manifold or on the real line.
\end{cor}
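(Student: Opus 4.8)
The strategy is to combine Theorem~\ref{thm:main} with the Plante--Thurston theorem, which asserts that a finitely generated torsion--free nilpotent group acting faithfully by $C^2$ diffeomorphisms on a compact one--manifold (or on $\bR$) must be abelian. So it suffices to exhibit a marked, left orderable extension $H$ of $P$ which contains a finitely generated torsion--free nilpotent subgroup of nilpotence class $\geq 2$; then Theorem~\ref{thm:main} produces a square root $G\in\mathcal{S}$ with $H<G$, and any $C^2$ action of $G$ on a compact one--manifold or on $\bR$ would restrict to a faithful $C^2$ action of the nonabelian nilpotent subgroup, contradicting Plante--Thurston.

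The construction of $H$ is the main point. As promised in the paragraph preceding the corollary, one shows that square roots of $F$ can contain torsion--free nilpotent groups of arbitrary nilpotence degree; for this corollary, the integer Heisenberg group $\mathcal{H}$ already suffices. The plan is to produce a left orderable group $H$ equipped with a surjection $H\surj P$ sending $h_1\mapsto(p_1,q_2)$, $h_2\mapsto(p_2,q_1)$, such that $H$ contains a copy of $\mathcal{H}$. One natural route is to work inside $\Homeo^+(\bR)$: realize $P<F_1\times F_2$ concretely, and build $H$ as a group of homeomorphisms of $\bR$ mapping onto $P$ with kernel chosen to absorb a Heisenberg relation's failure in a controlled way, keeping the total group a subgroup of $\Homeo^+(\bR)$ (hence left orderable). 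Since $\bf F_2$ is already a marked extension, one may alternatively take a suitable amalgam or extension that maps onto $P$ and that contains $\mathcal{H}$; the flexibility in Theorem~\ref{thm:main} is exactly that $H$ can be \emph{any} marked left orderable extension, so there is considerable room.

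The main obstacle is ensuring simultaneously that (i) $H$ surjects onto $P$ via the prescribed marking, (ii) $H$ is left orderable, and (iii) $H$ genuinely contains a nonabelian torsion--free nilpotent subgroup that survives in $H$ (i.e.\ is not collapsed by any relation forced by the marking). Conditions (i) and (iii) pull in opposite directions: adding a nilpotent subgroup is easy, but keeping it nonabelian while admitting the surjection to $P$ requires that the Heisenberg generators not lie in the kernel in a way that trivializes the bracket. I expect to handle this by taking $H_1$ to be a left orderable group containing $\mathcal{H}$ together with a surjection $H_1\surj F$, doing the analogous thing with $H_2\surj F$, and then letting $H$ be the fiber product (pullback) over $P$'s defining data, which is left orderable as a subgroup of $H_1\times H_2$ and inherits the required marking; one then checks the Heisenberg subgroup maps injectively into, say, the first factor. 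Once $H$ is in hand, the deduction via Theorem~\ref{thm:main} and Plante--Thurston is immediate.
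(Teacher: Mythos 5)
Your overall strategy matches the paper's (produce a marked left orderable extension $H$ of $P$ containing a nonabelian finitely generated nilpotent subgroup, then invoke Theorem~\ref{thm:main} together with Plante--Thurston), but there is a genuine gap in the real-line half of the statement, and your construction of $H$ is not actually carried out.

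The serious problem is your statement of Plante--Thurston for $\bR$. Theorem~\ref{thm:PT} (Plante--Thurston, with the extension due to Farb--Franks) gives two different conclusions: for $M$ a \emph{compact} one--manifold, a finitely generated nilpotent subgroup of $\Diff^2(M)$ is abelian; but for $\bR$, the conclusion is only that a nilpotent subgroup of $\Diff^2(\bR)$ is \emph{metabelian}. The integer Heisenberg group $\mathcal{H}$ is metabelian (its derived length is $2$), so for the real-line case the theorem gives no obstruction to a faithful $C^2$ action of $\mathcal{H}$ on $\bR$, and indeed such actions exist. Thus $\mathcal{H}$ ``already suffices'' only for the compact case; to rule out $C^2$ actions on $\bR$ you must embed a finitely generated nilpotent group of high enough class that it fails to be metabelian. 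This is exactly why the paper works with nilpotent groups of arbitrary step rather than stopping at Heisenberg.

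The second issue is that your proposed construction of $H$ (a fiber product $H_1\times_P H_2$ with $H_i\surj F$) is left as a sketch, with the key verifications deferred. The paper has already done the work in Lemma~\ref{lem:solvable}: with $\mathcal{R}=\ker(\bf F_2\to P)$, the quotients $N_k=\bf F_2/\mathcal{R}^k$ are marked left orderable extensions of $P$, and $\mathcal{R}/\mathcal{R}^k<N_k$ is the free nilpotent group of class $k-1$ on the (infinitely many) free generators of $\mathcal{R}$. Picking a finite subset $S$ of a basis of $\mathcal{R}$ with $|S|\geq 2$, the subgroup $\Gamma_S$ generated by the images of $S$ is a retract of $\mathcal{R}/\mathcal{R}^k$ and is a finitely generated free nilpotent group of class $k-1$. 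Taking $k\geq 3$ makes $\Gamma_S$ nonabelian (settling compact $M$), and taking $k\gg 0$ makes $\Gamma_S$ non--metabelian (settling $\bR$). You should use this, or some equally explicit family, rather than the unverified fiber-product sketch.
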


Corollary \ref{cor:smooth} gives an example of a subgroup $\langle f,g\rangle<\Homeo^+(I)$ which admits no faithful $C^2$ action on the interval, the circle, or the real line, but where $\langle f^2,g^2\rangle$ admits a faithful $C^{\infty}$ action on every one--manifold (cf.~\cite{GS1987,KKL16}).

Nonabelian nilpotent groups cannot act by piecewise--linear homeomorphisms on $I$ or on $S^1$:

\begin{cor}\label{cor:pl}
There exists a square root $G\in\mathcal{S}$ such that $G$ does not admit a faithful action by $PL$ homeomorphisms of a compact one--manifold.
\end{cor}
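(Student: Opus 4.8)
The plan is to combine Theorem~\ref{thm:main} with the fact recorded in the remark above, namely that no nonabelian nilpotent group acts faithfully by $PL$ homeomorphisms on $I$ or on $S^1$. It suffices to produce a single $G\in\mathcal S$ that contains a nonabelian torsion-free nilpotent subgroup $N$. Indeed, a faithful $PL$ action of $G$ on a compact one-manifold $M$ restricts to a faithful $PL$ action of $N$; since $M$ has finitely many components, a finite-index subgroup $N_0<N$ preserves each of them, and therefore embeds into the product of the groups of $PL$ homeomorphisms of the components of $M$, each component being $I$ or $S^1$. As those groups contain no nonabelian nilpotent subgroup, the nilpotent image of $N_0$ in each factor is abelian, so $N_0$ is abelian --- contradicting the fact that a finite-index subgroup of a nonabelian torsion-free nilpotent group is again nonabelian (for instance by passing to Mal'cev completions). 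So the only real task is to construct $G$.

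To this end I would first produce a marked, left orderable extension $H$ of $P$ containing a nonabelian torsion-free nilpotent group, and then invoke Theorem~\ref{thm:main}. Let $\pi\co {\bf F_2}\to P$ be the surjection sending the two free generators to $(p_1,q_2)$ and $(p_2,q_1)$, and set $R=\ker\pi$. Since $P$ is infinite and not free --- it contains a copy of $F$, hence of $\Z^2$, by Brin's Ubiquity Theorem --- the subgroup $R$ is a nontrivial normal subgroup of infinite index in ${\bf F_2}$, hence a free group of infinite rank, and in particular nonabelian. Put $H={\bf F_2}/\gamma_3(R)$, where $\gamma_3$ denotes the third term of the lower central series. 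Then $N:=R/\gamma_3(R)<H$ is nilpotent of class two, and torsion-free by the classical theorem of Magnus on lower central quotients of free groups; it is nonabelian because $R$ is. The exact sequence $1\to N\to H\to P\to 1$ exhibits $H$ as a marked extension of $P$, since the two free generators of ${\bf F_2}$ descend to a generating pair of $H$ mapping to $(p_1,q_2)$ and $(p_2,q_1)$. Moreover $H$ is left orderable: $P$ is left orderable, being isomorphic to a subgroup of $F$; $N$ is left orderable, being torsion-free nilpotent; and an extension of a left orderable group by a left orderable group is left orderable, as one sees by concatenating positive cones. Theorem~\ref{thm:main} then yields $G\in\mathcal S$ with $N<H<G$, as required. (Replacing $\gamma_3(R)$ by $\gamma_{c+1}(R)$ gives, by the same argument, square roots of $F$ containing nonabelian torsion-free nilpotent subgroups of every nilpotence degree $c\ge 2$; this is the statement announced just before the corollary, and it also proves Corollary~\ref{cor:smooth} via the Plante--Thurston theorem.)

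Granting Theorem~\ref{thm:main}, there is no further serious obstacle: the corollary follows formally from that theorem, the elementary construction of $H$, and the non-embeddability of nonabelian nilpotent groups into the groups of $PL$ homeomorphisms of $I$ and $S^1$. The one point worth flagging is that Theorem~\ref{thm:main} applies only to \emph{marked} extensions of $P$, so the nilpotent subgroup cannot simply be adjoined freely but must be smuggled in ``below'' the marked generating pair --- which is exactly why one takes $H$ to be a quotient of the free group on the two marked generators. The reduction from an arbitrary compact one-manifold to its connected pieces, carried out in the first paragraph, is routine.
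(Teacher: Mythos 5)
Your proof is correct and follows essentially the same route as the paper: the group $H={\bf F_2}/\gamma_3(R)$ you construct is exactly the group $N_3$ from Lemma~\ref{lem:solvable}, which the paper already shows is a marked, left orderable extension of $P$, and the corollary then follows from Theorem~\ref{thm:main} together with Theorem~4.1 of Farb--Franks. One small point of divergence: the paper exhibits a \emph{finitely generated} nonabelian nilpotent subgroup $\Gamma_S<N_k$ (a retract of $R/\gamma_k(R)$), whereas you work with the whole, infinitely generated, $N=R/\gamma_3(R)$; since the Farb--Franks theorem is stated for finitely generated nilpotent groups, one should pass to a finitely generated nonabelian subgroup of $N$ before invoking it, though this is immediate and your contradiction structure accommodates it. Your explicit reduction from an arbitrary compact one-manifold to its (finitely many) components, with a finite-index $N_0$ preserving each, is a detail the paper leaves implicit and is handled correctly.
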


Corollary~\ref{cor:pl} stands in contrast to the standard definition of $F$, which is as a group of $PL$ homeomorphisms of the interval. Corollary~\ref{cor:pl} answers a question due to M. Brin~\cite{BrinPersonal}.

In order to show that square roots of $F$ may not be finitely presented, we prove the following result which is similar in spirit to some of the methods in~\cite{KKL16}:

\begin{thm}\label{thm:uncountable}
The class $\mathcal{S}$ contains uncountably many distinct isomorphism types.
\end{thm}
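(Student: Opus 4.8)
The plan is to leverage Theorem~\ref{thm:main} together with a continuum-sized supply of pairwise non-isomorphic left orderable marked extensions of $P$, and then to find isomorphism invariants of the resulting square roots that separate uncountably many of them. First I would construct an uncountable family $\{H_\lambda\}_{\lambda\in\Lambda}$ of left orderable marked extensions of $P$ whose isomorphism types are controlled by an invariant that survives passage to the overgroup $G_\lambda\in\mathcal{S}$ produced by Theorem~\ref{thm:main}. The natural candidate, in the spirit of~\cite{KKL16}, is to use torsion-free nilpotent subgroups: the excerpt already flags that square roots of $F$ can contain torsion-free nilpotent groups of arbitrary nilpotence degree, so one expects a more refined construction producing, for each subset $S\subseteq\mathbb{N}$, a marked left orderable extension $H_S$ such that $H_S$ (and hence $G_S$) contains a torsion-free nilpotent group of nilpotence degree $d$ if and only if $d\in S$, or some similar Boolean-valued encoding. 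Because there are $2^{\aleph_0}$ subsets of $\mathbb{N}$ and only countably many possible "nilpotence spectra" would give a contradiction, a cleaner route is to encode a single real number: for each $x\in(0,1)$ with binary expansion $(x_n)$, build $H_x$ as an iterated construction where the $n$-th stage contributes a nilpotent (or metabelian, or otherwise detectable) subgroup precisely when $x_n=1$, in such a way that the set of $n$ for which $G_x$ contains the corresponding marked subgroup recovers $(x_n)$.

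The key steps, in order, are: (1) produce the parametrized family of groups $H_x$, each an explicit marked extension of $P$, and verify left orderability of each $H_x$ — here one can build $H_x$ as a subgroup of $\mathrm{Homeo}^+(\mathbb{R})$ directly, or as an iterated extension (HNN/graph-of-groups type) of left orderable pieces over left orderable subgroups, using that the class of left orderable groups is closed under such operations; (2) apply Theorem~\ref{thm:main} to get $G_x\in\mathcal{S}$ with $H_x<G_x$; (3) isolate a group-theoretic invariant $\mathrm{Inv}(G_x)$ — a countable set of natural numbers, say — that is intrinsic to $G_x$ (so preserved under isomorphism) and equal to $\{n : x_n=1\}$; (4) conclude that $x\mapsto\mathrm{Inv}(G_x)$ is injective on a set of $2^{\aleph_0}$ reals, so the map $x\mapsto[G_x]$ hits uncountably many isomorphism classes.

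The main obstacle, and the step requiring the most care, is step (3): proving that the chosen invariant is genuinely intrinsic to $G_x$ and not merely to the marked copy of $H_x$ sitting inside it. The square root $G_x$ produced by Theorem~\ref{thm:main} contains $H_x$, but $G_x$ is a much larger and more complicated group (it is after all a square root of $F$, so $G_x^{(2)}\cong F$ has its own rich subgroup structure), and one must ensure that the extra elements of $G_x$ do not accidentally create, say, nilpotent subgroups of the "forbidden" degrees, nor destroy the detectability of the intended ones. I would address this by choosing the invariant to be something robust under the specific construction of Theorem~\ref{thm:main} — for instance, the set of $n$ for which $G_x$ contains a subgroup isomorphic to a fixed nilpotent group $N_n$ whose presence is forced by $H_x$ but excluded elsewhere — and then arguing, using control on $\mathrm{Homeo}^+(I)$ (e.g. the structure of centralizers, or the interval-support combinatorics of the two-chain, or abelianization/commutator-width data), that no nilpotent subgroup of degree $\geq 2$ of the relevant isomorphism type can arise from the part of $G_x$ outside $H_x$. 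A secondary technical point is making the parametrization $x\mapsto H_x$ sufficiently uniform that a single blueprint feeds Theorem~\ref{thm:main}; since that theorem is already stated for an arbitrary marked left orderable extension, this should be routine once the family $\{H_x\}$ is in hand.
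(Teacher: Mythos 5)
The central issue is your step (3). You correctly identify ``proving that the chosen invariant is genuinely intrinsic to $G_x$'' as the main obstacle, but this step is in fact unnecessary, and trying to carry it out is the wrong approach. The paper sidesteps it entirely with a counting argument: every element of $\mathcal{S}$ is two--generated, hence countable, hence contains only countably many two--generated subgroups (there are only countably many ordered pairs of elements), and therefore only countably many isomorphism types of finitely generated subgroups. So once one has an uncountable family $\mathcal{N}$ of pairwise non--isomorphic finitely generated left orderable groups and an assignment $N\mapsto G_N\in\mathcal{S}$ with $N<G_N$, the family $\{G_N\}$ must already realize uncountably many isomorphism types --- no intrinsic invariant of $G_N$ separating the $N$'s is ever needed. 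This observation makes the hard part of your plan evaporate; by contrast, actually controlling which nilpotent subgroups appear inside $G_x$ (a two--generated group squaring to $F$, with all of $F$'s rich subgroup structure sitting inside) is genuinely difficult and you give no mechanism for doing it.

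There is a second, smaller gap: you never construct the uncountable family $\{H_x\}$. The paper uses the Hall--Neumann family $\mathcal{N}$ (Proposition~\ref{prop:neumann}), which consists of two--generated left orderable groups with abelianization $\Z^2$. These are \emph{not} marked extensions of $P$, so Theorem~\ref{thm:main} does not apply to them directly. Instead the paper solves an equation $w(\tau,x)=\kappa$ over $\Homeo^+(\R)$ (Lemma~\ref{lem:equation}), with $w$ a commutator word killed by the map $\mathbf{F}_2\to P$, and feeds the resulting $\tau,y$ into the construction of Subsection~\ref{subsec:building} to produce a square root $G_N$ containing $N$ as a subgroup of a subgroup. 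If you wanted to follow your plan literally, you would need to exhibit an uncountable family of pairwise non--isomorphic marked, left orderable extensions of $P$; you gesture at building them but do not say how, and the Hall--Neumann groups do not immediately give this. Combining your framework with the countability observation would fix the logic, but you would still need to either produce the uncountable family of marked extensions of $P$ or adopt the paper's equation--solving workaround to embed the Hall--Neumann groups.
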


Since there are only countably many isomorphism types of finitely presented groups, we immediately obtain the following:

\begin{cor}\label{cor:fp}
There exists an element $G\in\mathcal{S}$ which admits no finite presentation.
\end{cor}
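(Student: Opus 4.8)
The plan is to obtain Corollary~\ref{cor:fp} as an immediate cardinality consequence of Theorem~\ref{thm:uncountable}; there is no dynamical or combinatorial content to add, and correspondingly no real obstacle — essentially all of the work has already been done in establishing that $\mathcal{S}$ is uncountable.

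First I would record the standard counting fact that there are only countably many isomorphism types of finitely presented groups. A finite presentation is the data of an integer $m\geq 0$ together with a finite list of relators, each of which is a finite word in $m$ letters and their inverses, i.e.\ an element of the free group on $m$ letters; since that free group is countable, there are only countably many finite lists of relators for each fixed $m$, and hence only countably many finite presentations altogether. Each presentation determines a single group up to isomorphism, so the collection of isomorphism types of finitely presented groups is countable.

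Now I would simply combine this with Theorem~\ref{thm:uncountable}. Every square root of $F$ is a two--generated subgroup of $\Homeo^+(I)$, so $\mathcal{S}$ is a genuine (and by Theorem~\ref{thm:uncountable} uncountable) set of isomorphism types. Since the isomorphism types of finitely presented groups form a countable subset of all isomorphism types, $\mathcal{S}$ cannot be contained in it, and therefore some $G\in\mathcal{S}$ admits no finite presentation. The same count in fact yields slightly more: in any uncountable family of pairwise non--isomorphic square roots of $F$ — such as the one produced in the proof of Theorem~\ref{thm:uncountable} — all but countably many members fail to be finitely presented.

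Finally, I would note for the record that this argument is non--constructive: it does not exhibit a specific non--finitely--presented square root. To produce an explicit example one would instead feed a concrete marked left orderable extension $H$ of $P$ modeled on a finitely generated but not finitely presented group (for instance a suitable nilpotent--by--cyclic or metabelian group) into Theorem~\ref{thm:main}, and then verify non--finite--presentability of the resulting $G$ directly, e.g.\ by showing that its second integral homology, or its relation module, is not finitely generated. This refinement is not required for the statement at hand, so I would not pursue it here.
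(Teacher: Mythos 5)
Your proof is correct and follows the same route as the paper: the paper deduces Corollary~\ref{cor:fp} immediately from Theorem~\ref{thm:uncountable} together with the standard fact that there are only countably many isomorphism types of finitely presented groups. Your write-up merely spells out that counting fact and the cardinality contradiction in more detail, and the closing remark about a constructive alternative is a harmless aside not needed for the statement.
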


Analogues of Theorem~\ref{thm:simple} for square roots of $F$ are not the primary topic of this paper, though we can give the following statement which follows immediately from the discussion of commutator subgroups of chain groups in~\cite{KKL16}. Recall that the action of a group on a topological space is \emph{minimal} if every point has a dense orbit:

\begin{prop}\label{prop:simple}
Let $G\in\mathcal{S}$ act minimally on its support. Then the commutator subgroup $[G,G]$ is simple.
\end{prop}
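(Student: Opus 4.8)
The plan is to import the structure theory of chain groups from \cite{KKL16} and check that a square root $G\in\mathcal{S}$, when acting minimally on its support, satisfies the hypotheses under which a chain group has simple commutator subgroup. Recall that by definition, $G=\langle f,g\rangle$ where $\supp f$ and $\supp g$ form a two--chain of intervals; thus $G$ is a \emph{two--chain group} in the sense of \cite{KKL16} (or at least a group generated by a prechain of length two). First I would recall the general fact from \cite{KKL16} that for an irreducible chain group acting on a connected one--manifold (an interval), the action on its support is semiconjugate to a minimal action, and that the commutator subgroup of a chain group acting minimally on an interval is simple. The key technical inputs are: (i) the dynamical fact that a chain group with minimal action has a "sufficiently transitive" action --- one can move any compact subset of the support off itself and into any prescribed subinterval; and (ii) a commutator--trick argument in the style of Epstein--Higman--Brown showing that any normal subgroup of $[G,G]$ containing a nontrivial element must contain all of $[G,G]$.

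The main steps, in order, are as follows. First, verify that $G$ is irreducible, i.e.\ that the open support $U=\supp f\cup\supp g$ is connected and the action does not decompose; this is automatic from the two--chain condition since $\supp f\cap\supp g$ is a nonempty open interval, so $U$ is a single interval. Second, invoke the minimality hypothesis directly --- the proposition assumes $G$ acts minimally on its support, so I do not need to pass to a semiconjugate model. Third, establish the transitivity property: given any two proper compact subintervals $J,K\subset U$, there is an element of $G$ (indeed of $[G,G]$, after a standard fragmentation) carrying $J$ into $K$; this follows from minimality together with the fact that $G$ contains elements supported in arbitrarily small subintervals, which is a structural feature of chain groups coming from the nested commutator relations. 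Fourth, run the classical double--commutator argument: if $1\neq N\trianglelefteq [G,G]$, pick $1\neq n\in N$ with $\supp n$ contained in a proper subinterval, conjugate to get another such element with disjoint support, and take commutators to manufacture, inside $N$, elements supported in any prescribed small interval; then use the transitivity from step three plus the fact that $[G,G]$ is generated by its "small--support" elements to conclude $N=[G,G]$.

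Essentially every ingredient above is already assembled in \cite{KKL16}, so the honest statement of the proof is short: the conditions defining $\mathcal{S}$ make $G$ a two--chain group, minimality is hypothesized, and \cite[discussion of commutator subgroups of chain groups]{KKL16} then yields simplicity of $[G,G]$ verbatim. The only point requiring a remark is that $\langle f,g\rangle$ genuinely falls within the class of groups treated there --- one must note that the two--chain hypothesis on the supports is exactly the hypothesis "$\{\supp f,\supp g\}$ is a chain" used in \cite{KKL16}, and that no additional dynamical regularity (such as the $F$--like behavior of $\langle f^2,g^2\rangle$) is needed for this particular conclusion.

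The step I expect to be the main obstacle, or at least the one needing the most care, is confirming the transitivity property (step three) in the generality required: minimality of the action gives density of orbits of points, but the commutator argument needs to move \emph{compact intervals} with controlled support, which requires knowing that $G$ contains enough elements of small support. In a general minimal group action this can fail, but for chain groups it holds because of the prechain structure --- each generator, together with suitable commutators, produces homeomorphisms supported in proper subintervals of $U$, and iterating gives arbitrarily small support. I would therefore spend the bulk of the write--up pinning down this "plenty of small--support elements" claim, either by direct appeal to the relevant lemma in \cite{KKL16} or by a brief argument using the nested supports of $f$, $g$, and $[f,g]$.
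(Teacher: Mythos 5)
Your approach is essentially the same as the paper's: the paper states the proposition without proof beyond citing the discussion of commutator subgroups in \cite{KKL16}, and your sketch of the Higman--Epstein style double--commutator argument (irreducibility of the support, minimality, small--support elements, transitivity) is a reasonable unpacking of what that citation encodes. One small caution: as the paper notes in Subsection~\ref{sssec:relation}, a square root of $F$ is a $2$--\emph{pre}chain group rather than a chain group in the sense of \cite{KKL16} --- you flag this possibility early but later write that the conditions defining $\mathcal{S}$ make $G$ a two--chain group, which is imprecise; the result you want from \cite{KKL16} is the simplicity statement for prechain groups acting minimally, and the minimality hypothesis in Proposition~\ref{prop:simple} is precisely what places $G$ in that setting.
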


It follows from Proposition~\ref{prop:simple} that if $G=\langle f,g\rangle$ and that if $\langle f^2,g^2\rangle$ generate a copy of Thompson's group $F$ which acts minimally on the interior of $I$, then the commutator subgroup of $G$ is simple.

\subsection{Square roots of other groups}\label{subsec:other}

Essential in the discussion of square roots of $F$ in this paper is the \emph{dynamical realization} of $F$ on a two--chain of intervals, which is a dynamical setup in which $F$ occurs naturally (see Subsection~\ref{subsec:2prechain}, cf. Proposition 1.1.8 of~\cite{DeroinNavasRivas}). If one abandons the dynamical framework of chains of intervals, the group theoretic diversity phenomena witnessed by Theorems~\ref{thm:main} and~\ref{thm:uncountable} become so common as to be a general feature of homeomorphism groups.

To be precise, let $H=\langle h_1,\ldots,h_n\rangle<\Homeo^+(I)$ be a finitely generated subgroup. An $n$--generated subgroup $G=\langle g_1,\ldots,g_n\rangle<\Homeo^+(I)$ is called a \emph{square root of $H$} if \[H\cong\langle g_1^2,\ldots,g_n^2\rangle.\]
We note that the definition of a square root of $H$ depends implicitly on a choice of generators for $H$, and is therefore really a square root of
a marked group.

If $H=\langle h_1,\ldots,h_n\rangle$ is a generating set for a group $H$, we will define the \emph{skew subdirect product} of $H$ to be the subgroup of $H\times H$ generated by $\{(h_i,h_i^{-1})\}_{i=1}^n$, and we will denote this group by $\yh{H}$.

\begin{thm}\label{thm:Z^n}
Let $\Z=\langle t_1,\ldots,t_{n+1}\mid t_1=\cdots=t_{n+1}\rangle$, and let $H<\Homeo^+(I)$ be an $n$--generated group. Then there exists a square root $G$ of $\Z$ such that $\yh H<G$.
\end{thm}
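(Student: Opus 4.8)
The plan is to mimic the construction behind Theorem \ref{thm:main}, but in a drastically simplified dynamical setting. We want to build homeomorphisms $g_1,\dots,g_{n+1}$ of $I$ so that $\langle g_i^2\rangle$ is an infinite cyclic group generated by a single element $z$ with $g_i^2=z$ for all $i$, and so that the skew subdirect product $\yh H$ embeds into $G=\langle g_1,\dots,g_{n+1}\rangle$. Here we should really take $n+1$ generators of $\Z$ (hence $n+1$ generators of the square root), but only the first $n$ of them will be used to carry the two ``skew'' copies of $H$; the extra generator is what supplies the common square $z$. The key mechanism, exactly as in the chain--group constructions, is to realize each $g_i$ as a homeomorphism supported on a union of two disjoint subintervals $J_1\sqcup J_2$ of $I$, acting on $J_1$ by a conjugate of $h_i$ and on $J_2$ by a conjugate of $h_i^{-1}$, while arranging that on a third ``connecting'' interval the generators are cyclically permuted so that every $g_i^2$ equals the same shift $z$.

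First I would fix a homeomorphic copy of the given faithful action $H<\Homeo^+(I)$ on an interval $J$, and choose a second reversed copy of it on a disjoint interval $J'$, so that the diagonal-with-a-flip embedding $H\to\Homeo^+(J\sqcup J')$, $h\mapsto (h,h^{-1}\text{-conjugate})$, has image $\yh H$. Second, I would take infinitely many translated copies $J^{(m)}\sqcup J'^{(m)}$ of this pair, indexed by $m\in\Z$, stacked inside $(0,1)$ and accumulating only at the endpoints, and let $z$ be the homeomorphism of $I$ that shifts the block indexed by $m$ to the block indexed by $m+1$ (a ``baker's-map''-type infinite-order homeomorphism, the standard way to realize $\Z$ acting on an interval). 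Third, and this is the heart of the matter, I would define $g_i$ to be a ``square root of $z$ twisted by $h_i$'': on even--indexed blocks $g_i$ performs the appropriate conjugate of $(h_i,h_i^{-1})$ and then translates to the next block, while on odd--indexed blocks it translates and performs nothing, so that composing $g_i$ with itself cancels the $h_i$ part (because $h_i$ followed by $h_i$ read across the two halves of $\yh H$ is not what we want — rather we must arrange that the two passes combine the $J$-part and $J'$-part in a way that dies, which forces the precise bookkeeping of where the twist sits). The upshot should be $g_i^2=z$ for every $i$, hence $\langle g_1^2,\dots,g_{n+1}^2\rangle=\langle z\rangle\cong\Z$ with all the relations $t_1=\dots=t_{n+1}$ satisfied, while the group generated by the commutators $[g_i,g_j]$ (which are supported inside single blocks and there act as the corresponding commutators in $\yh H$) recovers a copy of $\yh H$ inside $G$. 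Faithfulness of $\yh H< G$ follows from faithfulness of $H<\Homeo^+(I)$ and the disjointness of the blocks.

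The main obstacle is getting the combinatorics of the twist exactly right so that $g_i^2$ is genuinely independent of $i$ and equals a single fixed homeomorphism $z$, rather than $z$ composed with some residual element of $\yh H$: this is precisely the place where the ``skew'' (flip) in the definition of $\yh H$ is forced on us, since only with the inverse in the second coordinate do the two passes of $g_i$ through a block cancel. Once that is arranged, recovering $\yh H$ as the subgroup generated by $\{[g_i,g_j]\}$ — or more directly by conjugates of the $g_i g_j^{-1}$ that are supported in a single block — and checking that these generate exactly the skew subdirect product (and not more, and not less) is a routine support-tracking argument of the kind used throughout chain--group theory, so I would present it briskly. No appeal to left orderability or to the structure of $F$ is needed here; the statement is really a soft illustration that, away from the rigid two--chain setup, square roots can be made wild for completely trivial reasons.
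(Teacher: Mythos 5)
Your overall plan matches the paper's argument, and you have correctly identified both the basic mechanism (square roots of a fixed translation, twisted by the $h_i$, with the skew in $\yh H$ forced by the requirement that the twist die upon squaring) and the extraction step (a ``difference'' of two generators, one twisted and one untwisted, recovers the twist). However, the specific bookkeeping you describe --- twist by $(h_i,h_i^{-1})$ on even-indexed blocks, pure translation on odd-indexed blocks --- does not give $g_i^2=z$. If the twist appears only on every other block, then $g_i^2$ applied to an even block performs one twist and then two translations; the twist never meets its inverse, so $g_i^2$ is $z$ composed with a nontrivial residual and the relation $t_1=\cdots=t_{n+1}$ fails. The same is true for every variant of ``twist, then pass silently through the next block.''

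The paper sidesteps the bookkeeping by letting it be dictated by the constraint itself. Take $\tau(x)=x+1$, set $T_{n+1}(x)=x+1/2$, and for $1\le i\le n$ declare $T_i=T_{n+1}\circ h_i$ on $[0,1/2]$ (with $H$ rescaled to have support in $(0,1/2)$), then extend $T_i$ to all of $\R$ by imposing $T_i^2=\tau$; Lemma~\ref{lem:root} makes this extension unique. Chasing the constraint one half-period at a time shows that $T_i$ acts as $T_{n+1}$ post-composed with a conjugate of $h_i$ on every interval $[k,k+1/2]$ and with a conjugate of $h_i^{-1}$ on every interval $[k-1/2,k]$, so the two passes of $T_i^2$ across any half-period cancel automatically --- the alternation happens on \emph{every} half-period, not on alternate blocks. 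Then $S_i:=T_{n+1}^{-1}T_i$ is pure twist: a conjugate of $h_i$ on $[k,k+1/2]$ and of $h_i^{-1}$ on $[k-1/2,k]$, and restriction to $[0,1]$ gives the isomorphism $\langle S_1,\ldots,S_n\rangle\cong\yh H$. This is exactly your ``$g_ig_j^{-1}$'' suggestion with $j=n+1$; note that your first suggestion, the commutators $[g_i,g_j]$, would only recover the commutator subgroup of $\yh H$ and should be discarded.
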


\begin{cor}\label{cor:uncgeneral}
There exist uncountably many isomorphism types of three--generated subgroup of $\Homeo^+(I)$ such that the squares of the generators generate a cyclic group. Moreover, there exists a three--generated subgroup of $\Homeo^+(I)$ such that the squares of the generators generate a cyclic group and which contains a nonabelian free group.
\end{cor}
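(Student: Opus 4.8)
The plan is to apply Theorem~\ref{thm:Z^n} with $n=2$, so that $\Z=\langle t_1,t_2,t_3\mid t_1=t_2=t_3\rangle$ and a square root $G$ of $\Z$ is by definition a three--generated group $G=\langle g_1,g_2,g_3\rangle<\Homeo^+(I)$ with $\langle g_1^2,g_2^2,g_3^2\rangle\cong\Z$ cyclic. Everything then reduces to feeding suitable two--generated subgroups of $\Homeo^+(I)$ into the theorem.

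For the ``moreover'' clause I would take $H={\bf F_2}$. Since ${\bf F_2}$ is left orderable it acts faithfully by orientation--preserving homeomorphisms of $\R$, and hence embeds in $\Homeo^+(I)$; fixing such an embedding ${\bf F_2}=\langle x,y\rangle<\Homeo^+(I)$, Theorem~\ref{thm:Z^n} produces a three--generated square root $G$ of $\Z$ with $\yh{{\bf F_2}}<G$. It then suffices to observe that $\yh{{\bf F_2}}\cong{\bf F_2}$: the first--coordinate projection $\yh{{\bf F_2}}\to{\bf F_2}$ is onto, and a nontrivial element of its kernel would have the form $(1,w(x^{-1},y^{-1}))$ for a nonempty freely reduced word $w$ with $w(x,y)=1$ in ${\bf F_2}$, which is absurd. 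Thus $G$ contains a nonabelian free group.

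For the first clause I would run the same construction over an uncountable family. By Theorem~\ref{thm:uncountable} there are uncountably many pairwise non--isomorphic groups $H_\alpha\in\mathcal{S}$, each a two--generated subgroup of $\Homeo^+(I)$; applying Theorem~\ref{thm:Z^n} to each yields a three--generated square root $G_\alpha$ of $\Z$ with $\yh{H_\alpha}<G_\alpha$. To extract uncountably many isomorphism types from $\{G_\alpha\}$ I would argue by counting: a countable group has only countably many finitely generated subgroups, hence only countably many isomorphism types of finitely generated subgroups, so if only countably many isomorphism types occurred among the $G_\alpha$ then some fixed group $G$ would be isomorphic to $G_\alpha$ for uncountably many $\alpha$, and would therefore contain an isomorphic copy of the finitely generated group $\yh{H_\alpha}$ for uncountably many $\alpha$, forcing $\yh{H_\alpha}\cong\yh{H_\beta}$ for some $\alpha\neq\beta$.

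The crux — and the step I expect to be the main obstacle — is therefore to arrange that $\yh{H_\alpha}\cong\yh{H_\beta}$ forces $H_\alpha\cong H_\beta$. In general $\yh H$ does not recover $H$ (for instance $\yh{\mathrm{BS}(1,2)}$ surjects onto $\mathrm{BS}(1,2)$ with nontrivial kernel, hence is not isomorphic to it, $\mathrm{BS}(1,2)$ being Hopfian), so the family $\{H_\alpha\}$ must be chosen with care. The cleanest route I see is to realize the $H_\alpha$ in a dynamically symmetric way, so that the involution of the generating set sending $h_1\mapsto h_1^{-1}$, $h_2\mapsto h_2^{-1}$ extends to an automorphism $\psi_\alpha$ of $H_\alpha$; then $\yh{H_\alpha}=\langle(h_1,h_1^{-1}),(h_2,h_2^{-1})\rangle$ is precisely the graph $\{(h,\psi_\alpha(h)):h\in H_\alpha\}$ in $H_\alpha\times H_\alpha$, so $\yh{H_\alpha}\cong H_\alpha$, and the $\yh{H_\alpha}$ are pairwise non--isomorphic because the $H_\alpha$ are. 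It thus remains to check that the uncountably many isomorphism types furnished by Theorem~\ref{thm:uncountable}, or an uncountable subfamily of them, can be placed in such symmetric form — or, failing that, to produce directly by the same methods an uncountable family of two--generated left orderable groups admitting the inversion automorphism. Either route completes the proof via the counting argument above.
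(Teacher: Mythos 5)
Your overall strategy matches the paper's: apply Theorem~\ref{thm:Z^n} with $n=2$, handle the free-group clause via $\yh{{\bf F_2}}\cong{\bf F_2}$, and for the first clause feed an uncountable family of two--generated left orderable groups into the construction. You also correctly identify the crux — that $\yh H$ need not recover $H$, and that the clean fix is to use groups whose marked generating set admits the inversion automorphism $h_i\mapsto h_i^{-1}$, making $\yh H$ the graph of an automorphism and hence isomorphic to $H$ — and your Baumslag--Solitar example showing the necessity of this care is apt. Your counting argument (a countable group has only countably many isomorphism types of finitely generated subgroups) is also a correct way to finish once the family is in hand.

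However, you stop exactly at the point where the work is. You write that ``it thus remains to check'' that the uncountable family can be put in the required symmetric form, and you offer two routes (symmetrize the square roots of $F$ from Theorem~\ref{thm:uncountable}, or build a symmetric uncountable family directly) without carrying either one out. That unchecked step is the content of the proof, not a loose end. The paper does not use $\mathcal{S}$ at all here; it goes back to Proposition~\ref{prop:neumann} and the explicit Hall--Neumann presentation $\Gamma=\langle t,s_0\rangle$ from~\cite{MR1786869}, observes that $t\mapsto t^{-1}$, $s_0\mapsto s_0^{-1}$ defines an automorphism of $\Gamma$ (this follows from the symmetry of the defining relations $[[s_i,s_j],s_k]=1$ and $[s_i,s_j]=[s_{i+k},s_{j+k}]$ under $s_i\mapsto s_{-i}^{-1}$), and checks that each normal subgroup $N_X=\langle[s_0,s_i]:i\in X\rangle$ is preserved by this automorphism, so every $N\in\mathcal N$ satisfies $N\cong\yh N$. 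Without this concrete verification, your argument is not complete: in particular, your first-choice route through $\mathcal{S}$ is a dead end, since the groups $G_N\in\mathcal S$ produced in Theorem~\ref{thm:uncountable} are not given by presentations with any visible $h_i\mapsto h_i^{-1}$ symmetry, and there is no reason such a symmetry should exist for them. You should replace that with the Hall--Neumann family and supply the automorphism check.
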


\begin{thm}\label{thm:lamplighter}
Let $L=\Z\wr\Z$ be the lamplighter group, equipped with standard cyclic generators of the two factors of the wreath product.
Then $L$ has uncountably many isomorphism types of (marked) square roots.
\end{thm}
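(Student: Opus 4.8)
The plan is to produce uncountably many marked square roots of $L = \Z \wr \Z$ by mimicking the construction behind Theorem~\ref{thm:uncountable}, but carried out over the lamplighter group rather than over $F$. Write $L = \langle s, t \rangle$ where $t$ is the generator of the acting $\Z$ and $s$ is a lamp at the origin, so that $L = \langle s \rangle \wr \langle t \rangle$. The first step is to realize $L$ faithfully by orientation-preserving homeomorphisms of an interval in a way that is amenable to a ``square root'' construction: place countably many disjoint copies of the support of $s$ along a fundamental domain for a homeomorphism $t$ of $\R$ (or of an interval) that permutes these copies by translation of the index, exactly as in the standard dynamical realization of a wreath product. The point is that, just as with the two-chain realization of $F$, one can choose $f, g \in \Homeo^+(I)$ with $f^2 = s$ and $g^2 = t$ (up to the marking) by ``interleaving'' the dynamics: take $g$ to be a homeomorphism whose square is $t$ and which shifts the copies of $\supp(s)$ only halfway, and take $f$ to be a square root of $s$ supported on a single copy. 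The relations defining $L$ among $s^2, t^2$ will then hold by construction, giving an honest marked square root.

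The second step, which is where the uncountably many isomorphism types come from, is to insert an extra ``defect'' into the new generators indexed by a subset $S \subseteq \N$ (or a sequence of parameters), following the scheme already used in the proof of Theorem~\ref{thm:uncountable}: one perturbs $f$ and $g$ on a sequence of disjoint intervals accumulating at an endpoint of the support, in such a way that (i) the squares $f^2, g^2$ are unaffected and still generate $L$ with the standard marking, but (ii) the isomorphism type of $G_S = \langle f, g \rangle$ records the set $S$. Concretely, at the $n$-th interval one either leaves the ``trivial'' local picture or glues in a fixed finite piece of extra dynamics (for instance a commuting pair forcing a local $\Z^2$, or a bump breaking a relation that would otherwise hold), so that $G_S \cong G_{S'}$ forces $S = S'$ up to finite error; passing to an uncountable family of sets with pairwise infinite symmetric difference then yields uncountably many isomorphism types. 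One must check that the ambient group is still $\Homeo^+(I)$ and that the marking $h_1 \mapsto s$, $h_2 \mapsto t$ is preserved, but this is automatic since the perturbations are supported off the ``core'' where the squares act.

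The third step is to verify the algebraic invariant that distinguishes the $G_S$. For the $F$ case of Theorem~\ref{thm:uncountable} this was done via a subgroup-type or finite-generation/presentation argument; here I would instead use the structure of $L$: since $L$ is metabelian with a very rigid normal subgroup lattice, the local defects can be detected by looking at centralizers of suitable elements, or at the isomorphism types of certain explicitly constructed subgroups (e.g.\ the subgroup generated by the perturbed pieces), which by the same bookkeeping as in Section corresponding to Theorem~\ref{thm:uncountable} depends on $S$. The main obstacle I expect is precisely this verification — ensuring that the perturbations are both \emph{effective} (genuinely changing the isomorphism type, not just the marked-isomorphism type) and \emph{harmless} (not disturbing $\langle f^2, g^2 \rangle \cong L$ with its standard marking). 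Balancing these two requirements is exactly the technical heart of the analogous statement for $F$, and in the lamplighter setting one has the advantage that the target group is much simpler, so the ``harmless'' direction is cleaner, but the price is that $L$ has fewer moving parts, so one must work a little harder to make the defects detectable. Everything else — the dynamical realization, the square-root interleaving, the accumulation construction — is by now routine given the techniques of this paper and of~\cite{KKL16}.
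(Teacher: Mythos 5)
Your Step 1 is in the right spirit and parallels the paper's Theorem~\ref{thm:lamproot}: realize $L = \langle \psi^2, T^2 \rangle$ with $\psi$ supported on $(0,1)$ and $T(x) = x + 1/2$, so that $(0,1)$ and $T((0,1))$ form a two--chain. But Steps 2 and 3 diverge from the paper's argument and contain a genuine gap. The paper does not encode a subset of $\N$ by dynamical defects and then distinguish the resulting groups by centralizers or local invariants. Instead, Theorem~\ref{thm:lamproot} is a universality statement: for every countable left orderable marked extension $G$ of $P$, one can choose $\psi$ with $\psi(1/2) = 1/2$ so that the actions on $(0,1/2)$ and on $(1/2,1)$ independently recover the two generators of $G$, making $\langle \psi, T\psi T^{-1}\rangle$ contain a copy of $G$ while $\langle \psi^2, T^2\rangle \cong L$ with the standard marking. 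Uncountability then comes from the same counting argument as in Theorem~\ref{thm:uncountable}: the proof of that theorem, via the Hall--Neumann groups $N \in \mathcal{N}$ and the equation--solving Lemma~\ref{lem:equation}, produces uncountably many pairwise non--isomorphic left orderable marked extensions of $P$; each square root of $L$ is two--generated, hence countable, hence contains only countably many isomorphism types of two--generated subgroups; so uncountably many isomorphism types of square roots must appear. No direct distinguishing of particular square roots is ever required.

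Your Steps 2 and 3 instead commit to constructing explicit groups $G_S$ indexed by $S \subseteq \N$ and separating them by an algebraic invariant, but no such invariant is exhibited, and you flag this yourself as ``the technical heart.'' That is a genuine missing ingredient: groups of homeomorphisms built from different defect sets can be abstractly isomorphic even when they look dynamically distinct, and producing an invariant that detects $S$ up to finite symmetric difference and survives abstract isomorphism is not routine. There is also a subtlety your Step 2 glides over: perturbing $f$ anywhere on its support generically changes $f^2$, so the assertion that the squares ``are unaffected and still generate $L$'' is not automatic --- you would have to restrict to perturbations that change only the choice of square root of a fixed homeomorphism, and this constraint is in tension with the detectability needed in Step 3. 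The universality--plus--counting route sidesteps both difficulties; I would restructure the argument along those lines rather than trying to build and then distinguish explicit $G_S$.
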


In Subsection~\ref{subsec:generalroot}, we will define the notion of a formal square root of a finitely generated group. We will show that formal square roots of left orderable groups are again left orderable, and generally contain nonabelian free groups.

\subsection{Notes and references}
\subsubsection{Remarks on context}
The bulk of the present work could just as well be a discussion of the very general setup of two--generated subgroups of $\Homeo^+(I)$ whose generators are supported on intervals $J_1$ and $J_2$, which in turn form a chain. It is well--known that under suitable dynamical hypotheses (cf. Subsection~\ref{sssec:relation} below), the resulting subgroup is isomorphic to $F$. The class $\mathcal{S}$ of square roots of $F$ is merely the first instance of interesting algebraic behavior for such homeomorphism groups which does not follow from the properties of Thompson's group $F$. In particular, the results of this article apply to higher roots of $F$ beyond the square root.

\subsubsection{Relation to other authors' work}\label{sssec:relation}
To the authors' knowledge, it was M. Brin~\cite{BrinPersonal} who first asked what sorts of groups can occur as square roots of $F$, and in particular if square roots of $F$ can contain nonabelian free groups, whether they can fail to be finitely presented, and whether they can fail to act by $PL$ homeomorphisms on the interval. The main results of this paper form a natural complement to the joint work of the authors with S. Kim in~\cite{KKL16}. In that paper, Kim and the authors introduced the notions of a \emph{prechain group} and of a \emph{chain group}. In the terminology of~\cite{KKL16}, square roots of $F$ form a restricted subclass of $2$--prechain groups, namely those which square to become $2$--chain groups. The class of $2$--chain groups in turn consists of just one isomorphism type (i.e. Thompson's group $F$). Chain groups with ``fast" dynamics also fall into very few isomorphism types (namely the Higman--Thompson groups $\{F_n\}_{n\geq 2}$, and their subgroup structure has been studied independently by Bleak--Brin--Kassabov--Moore--Zarmesky~\cite{BBKMZ16} (cf.~\cite{Brown1985}). For generalities on Thompson's group $F$, the reader is directed to the classical Cannon--Floyd--Parry notes~\cite{CFP96}, as well as Burillo's book~\cite{BurilloBook}.

\subsubsection{Bi-orderability}
We briefly remark that many of the groups we construct in this paper, though they are manifestly orderable, will fail to be bi-orderable. Indeed,
bi-orderable groups are known to have the \emph{unique root property}. That is, if $f^n=g^n$ for some elements $f$ and $g$ in a biorderable
group for some $n\neq 0$, then $f=g$ (see Section 1.4.2 of~\cite{DeroinNavasRivas}). One of the themes of this paper is the non-uniqueness
of roots of homeomorphisms. Thus, the moment a given element in a left orderable group has two distinct square roots, the group cannot be
bi-orderable. See for instance Corollary~\ref{cor:uncgeneral}.

\section{Square roots of $F$}
In this section we establish the main result, after gathering some relevant preliminary facts and terminology.

\subsection{$2$--prechain groups and variations thereupon}\label{subsec:2prechain}

Let $\mathcal{J}=\{J_1,J_2\}$ be two nonempty open subintervals of $\bR$. 
We call $\mathcal{J}$ a \emph{chain of intervals} if $J_1\cap J_2$ is a proper nonempty subinterval of $J_1$ and of $J_{2}$. See Figure~\ref{f:coint}.
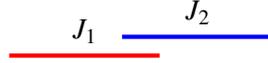
\begin{figure}[h!]
\begin{tikzpicture}[ultra thick,scale=.5]
\draw [red] (-5,0) -- (-1,0);
\draw (-3,0) node [above] {\small $J_1$}; 
\draw [blue] (-2,.5) -- (2,.5);
\draw (0,.5) node [above] {\small $J_2$}; 
%\draw [teal] (1,0) -- (5,0);
%\draw (3,0) node [above] {\small $J_3$}; 
\end{tikzpicture}
\caption{A chain of two intervals.}
\label{f:coint}
\end{figure}

If $f\in\Homeo^+(\R)$, we write $\supp f=\{x\in\R\mid f(x)\neq x\}$. Let $f$ and $g$ satisfy $\supp f= J_1$ and $\supp g=J_2$. In the terminology of~\cite{KKL16}, the group $\langle f,g\rangle$ is a $2$--prechain group. Note that, up to replacing $f$ and $g$ by their inverses, we may assume $f(x),g(x)\geq x$ for $x\in\R$.

Writing $J_1=(a,c)$ and $J_2=(b,d)$ with $a<b<c<d$, we have the following basic dynamical stability result, a proof of which can be found as a special case of Lemma 3.1 of~\cite{KKL16}:

\begin{lem}\label{lem:dyn criterion}
Suppose $g\circ f(b)\geq c$. Then $\langle f,g\rangle\cong F$.
\end{lem}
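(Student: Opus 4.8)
The plan is to show that the dynamical hypothesis $g\circ f(b)\geq c$ forces the group $\langle f,g\rangle$ to have exactly the structure of Thompson's group $F$ presented as $\form{A,B\mid [A,(AB)^{-k}B(AB)^k], k\in\{1,2\}}$, by producing an isomorphism that sends $A\mapsto f$, $B\mapsto g$. There are two things to check: first, that $f$ and $g$ satisfy the defining relators of $F$, and second, that $\langle f,g\rangle$ satisfies no additional relations, i.e. the natural surjection $F\surj\langle f,g\rangle$ is injective.

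For the relations, I would work directly with supports. Set $J_1=(a,c)$, $J_2=(b,d)$ with $a<b<c<d$. The element $AB$ corresponds to $h=gf$, which is supported on $(a,d)$ and pushes points to the right; the hypothesis $h(b)=gf(b)\geq c$ says that one application of $h$ moves $b$ past $c$. Consequently, for $k\geq 1$, the conjugate $h^{-k}gh^k$ has support $h^{-k}(J_2)=h^{-k}((b,d))$, whose left endpoint is $h^{-k}(b)$. Since $h$ pushes right, $h^{-k}(b)\leq h^{-1}(b)<b$, and since $h(b)\geq c$ we get $h^{-1}(c)\leq b$, hence the right endpoint $h^{-k}(d)$ lies... here I would chase the inequalities carefully to conclude that for $k\in\{1,2\}$ (in fact all $k\geq 1$) the support of $h^{-k}gh^k$ is disjoint from, or more precisely lies entirely to the left of overlapping only outside, the support $J_1=(a,c)$ of $f$ — the key point being that $h^{-k}gh^k$ is supported in $h^{-k}(J_2)\subseteq (a, h^{-k+1}(b))\subseteq(a,c)$ wait, that is not disjoint. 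The correct statement I will aim for: $h^{-k}(J_2)$ and $J_1$ have disjoint \emph{interiors of the region where both could be nontrivial}; more honestly, I will show $h^{-k}gh^k$ is supported to the left of $c$ and $f$ restricted there is eventually... The clean route: since this is invoked as a special case of Lemma 3.1 of~\cite{KKL16}, I would instead reduce to that lemma. So the genuine plan is: verify that our hypotheses (supports forming a two-chain, $f,g$ pushing right, $g\circ f(b)\geq c$) are exactly an instance of the hypotheses of~\cite[Lemma 3.1]{KKL16}, and quote its conclusion.

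If I were proving it from scratch rather than citing, the key step for the relations is the observation that $[A,(AB)^{-k}B(AB)^k]=1$ because $\supp f = J_1=(a,c)$ while $\supp\big((gf)^{-k}g(gf)^k\big)=(gf)^{-k}(J_2)$, and the condition $gf(b)\geq c$ together with the right-pushing property gives $(gf)^{-1}(c)\leq b$, so $(gf)^{-k}(J_2)=\big((gf)^{-k}(b),(gf)^{-k}(d)\big)$ has its intersection with $J_1$ contained in $\big((gf)^{-k}(b),(gf)^{-k+1}(b)\big]$, and iterating shows these nested intervals exhaust $(a,c)$ but any fixed $f$ commutes with $g$ on the complementary pieces — this telescoping is the heart of why only $k=1,2$ are needed and it is the step I expect to be the main obstacle to write cleanly. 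For the injectivity (no extra relations), I would use a ping-pong / normal form argument: the action of $\langle f,g\rangle$ on $(a,d)$ is semiconjugate to the standard action of $F$ on $(0,1)$ because both are generated by two right-pushing homeomorphisms supported on a two-chain with the overlap dynamics controlled by $gf(b)\geq c$; then faithfulness of the standard $F$-action transfers back. Concretely: build an $F$-equivariant monotone map between the orbit of a chosen basepoint under $\langle f,g\rangle$ and under the standard generators of $F$, show it is a bijection on orbits using that both actions have no global fixed points in the interior and the same ``breakpoint combinatorics,'' and conclude the marked groups are isomorphic.

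The main obstacle, then, is the injectivity direction: producing the semiconjugacy (or equivalently checking a confluent normal form for words in $f,g$) requires knowing that the dynamics under the hypothesis $gf(b)\geq c$ are rigid enough that no accidental coincidences occur. Since the paper explicitly attributes the whole statement to~\cite[Lemma 3.1]{KKL16} as a special case, the honest and efficient proof is simply to check that our $2$-prechain setup with $gf(b)\geq c$ satisfies the hypotheses of that lemma and invoke it; I would present the verification of hypotheses (two-chain of intervals, generators pushing right, the single inequality $gf(b)\geq c$) and then cite the conclusion $\langle f,g\rangle\cong F$.
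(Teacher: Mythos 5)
Your ultimate plan --- check the hypotheses and cite Lemma 3.1 of \cite{KKL16} --- is exactly what the paper does: the paper gives no separate argument, just that citation. Since most of your effort went into the from-scratch sketch and you flagged a confusion in it yourself, here is the actual issue: you mixed composition conventions. With $A\mapsto f$, $B\mapsto g$, and $AB$ meaning $g\circ f$ (so that the hypothesis reads $(AB)(b)\geq c$), the support of the conjugate $(AB)^{-k}B(AB)^k$ is $(AB)^{k}(\supp B)$ --- the power is $+k$, not $-k$ as you wrote. Since $AB$ pushes right, $(AB)(b)\geq c$, and $(AB)(c)\geq c$, one gets $(AB)^k(b)\geq c$ for all $k\geq 1$, hence $(AB)^k(\supp g)=\bigl((AB)^k(b),d\bigr)\subseteq[c,d)$ lies entirely to the right of $\supp f=(a,c)$ and the relations hold by disjointness of supports. (If you instead insist on the formula $\supp\bigl(h^{-k}\rho h^{k}\bigr)=h^{-k}(\supp\rho)$, swap the roles: with $A\mapsto g$, $B\mapsto f$, $h=g\circ f$, one gets $h^{-k}(\supp f)=\bigl(a,h^{-k}(c)\bigr)\subseteq(a,b]$, disjoint from $\supp g=(b,d)$.) Either way the disjoint-support argument closes cleanly, for every $k\geq 1$; there is no delicate telescoping. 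For injectivity you do not need a semiconjugacy or normal-form argument: $f$ and $g$ visibly do not commute (pick $x\in(b,c)$ with $g(x)\geq c$; then $f(g(x))=g(x)$ while $g(f(x))\neq g(x)$ because $f(x)\in(b,c)$ and $f(x)\neq x$), and every proper quotient of $F$ is abelian, so the surjection from the presented group onto $\langle f,g\rangle$ is an isomorphism. This is the same device the paper uses in its proofs of Lemmas~\ref{nestedL} and~\ref{nestedR}.
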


Under the dynamical hypotheses of Lemma \ref{lem:dyn criterion}, the group $\langle f,g\rangle$ is a chain group.
There is another configuration of intervals and homeomorphisms closely related to chain groups, which naturally gives rise to $F$, which we will need in the sequel, and which we will describe in the next subsection.

\subsection{Nested generators for $F$}
A natural generating set for $F$ emerges as homeomorphisms supported on a nested pair of intervals,
satisfying elementary dynamical conditions.
This shall be useful in our construction to follow.

%%Two nonempty compact intervals $\{J_1,J_2\}$ of $\bR$ are said to be \emph{nested}
%if $J_2\subset J_1$ and either one of the following holds:
%$$inf(J_2)= inf(J_1)\qquad sup(J_2)<sup(J_1)$$
%or 
%$$sup(J_2)=sup(J_1)\qquad inf(J_2)> inf(J_1)$$
%In the former case we say that $J_1,J_2$ are \emph{left nested}, and in the latter case we say that they are \emph{right nested}.
%Given homeomorphisms supported on nested intervals, elementary dynamical conditions guarantee that they generate $F$. 

\begin{lem}\label{nestedL}
Let $[a,b_1]$ and $ [a,b_2]$ be compact intervals in $\mathbb{R}$ such that $b_1<b_2$.
Let $f,g$ be homeomorphisms satisfying:
\begin{enumerate}
\item The supports of $g$ and $f$ are contained in $[a,b_1]$ and $ [a,b_2]$ respectively. 
\item $f$ is a decreasing map on $(a,b_2)$.
\item $f,g$ agree on the interval $[a, f(b_1)]$.
\end{enumerate}
Then $\langle f,g \rangle\cong F$.
\end{lem}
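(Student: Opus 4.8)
The plan is to identify $\langle f,g\rangle$ with $F$ using the finite presentation $\form{A,B\mid [A,(AB)^{-1}B(AB)],\,[A,(AB)^{-2}B(AB)^2]}$ of $F$ recorded above, via the generating pair $A=g$ and $B=g^{-1}f$ (so that $AB=f$). The idea is that hypothesis (3) is precisely what forces the two defining commutators to vanish.

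First I would record the elementary consequences of the hypotheses: $f$ fixes $a$ and $b_2$ and satisfies $f(x)<x$ on $(a,b_2)$, so in fact $\supp f=(a,b_2)$, $a<f(b_1)<b_1$, and $f^{-1}(b_1)>b_1$; likewise $g$ fixes $b_1$ and $\supp g\subseteq[a,b_1]$. The key point is that $g^{-1}f$ is the identity on $[a,f(b_1)]$: for $x$ in this interval one has $f(x)\in[a,f^2(b_1)]$, an interval on which $g^{-1}$ agrees with $f^{-1}$ by (3), so $g^{-1}f(x)=x$; since $g^{-1}f$ is also the identity off $(a,b_2)$, we get $\supp(g^{-1}f)\subseteq(f(b_1),b_2)$.

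Now for $k\in\{1,2\}$ the conjugate $(AB)^{-k}B(AB)^k=f^{-k}(g^{-1}f)f^{k}$ has support $f^{-k}\big(\supp(g^{-1}f)\big)\subseteq f^{-k}\big((f(b_1),b_2)\big)=(f^{\,1-k}(b_1),\,b_2)$, which equals $(b_1,b_2)$ when $k=1$ and is contained in it when $k=2$ (as $f^{-1}(b_1)>b_1$); in either case this is disjoint from $\supp A=\supp g\subseteq[a,b_1]$, so both relators die. Hence $A\mapsto g,\ B\mapsto g^{-1}f$ extends to a homomorphism $\phi\colon F\to\langle f,g\rangle$, which is onto since $g=\phi(A)$ and $f=\phi(AB)$. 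For injectivity I would use that every nontrivial normal subgroup of $F$ contains the commutator subgroup $[F,F]$, a standard consequence of Theorem~\ref{thm:simple} together with $F/[F,F]\cong\Z^2$: if $\ker\phi\neq 1$, then $\langle f,g\rangle$ is a quotient of $\Z^2$ and hence abelian, contradicting $fg(b_1)=f(b_1)\neq f^2(b_1)=gf(b_1)$ — here $gf(b_1)=f^2(b_1)$ by (3), and $f^2(b_1)\neq f(b_1)$ because $f(b_1)\in\supp f$. Thus $\phi$ is an isomorphism.

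I expect the only nonroutine step to be this initial change of generators: passing from $(f,g)$ to $(g,\,g^{-1}f)$ turns hypothesis (3) into the clean statement that $\supp(g^{-1}f)$ lies to the right of $f(b_1)$, after which the whole argument reduces to bookkeeping with supports together with the normal-subgroup structure of $F$. (One could instead try to check that $(g^{-1},\,f^{-1}g)$ is a two--chain satisfying the inequality of Lemma~\ref{lem:dyn criterion} — which in fact holds with equality — but under hypotheses (1)--(3) alone $f^{-1}g$ need not be a one--bump homeomorphism, so the presentation route above seems safer.)
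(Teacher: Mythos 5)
Your argument is correct and follows the same overall strategy as the paper's proof: verify that a generating pair of $\langle f,g\rangle$ satisfies the two defining relators of a finite presentation of $F$, then conclude from the non-commutativity of $f$ and $g$ together with the fact that every proper quotient of $F$ is abelian. What differs is which generating pair is substituted into the presentation. The paper asserts that $f,g$ themselves satisfy $[fg^{-1},f^{-1}gf]=[fg^{-1},f^{-2}gf^2]=1$, i.e.\ it takes $(a,b)=(f,g)$ in $\langle a,b\mid[ab^{-1},a^{-k}ba^k]\rangle$; you instead take $(A,B)=(g,g^{-1}f)$ in the Tietze-equivalent presentation $\langle A,B\mid [A,(AB)^{-k}B(AB)^k]\rangle$. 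Your choice is the more computable one: it reduces each relator to a commutator of two homeomorphisms with manifestly disjoint supports, namely $\supp g\subseteq[a,b_1]$ versus $\supp\bigl(f^{-k}(g^{-1}f)f^{k}\bigr)\subseteq(b_1,b_2)$, so the check is immediate. The paper's substitution is not of this form --- the supports of $fg^{-1}$ and $f^{-k}gf^k$ both meet a neighborhood of $(f(b_1),b_1)$, and in concrete piecewise-linear examples satisfying (1)--(3) (with $g$ chosen freely on $(f(b_1),b_1)$) the stated identities $[fg^{-1},f^{-k}gf^k]=1$ actually fail --- so your passage to the pair $(g,g^{-1}f)$ is not a cosmetic variation but precisely the step that makes the paper's sketched verification go through.
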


\begin{lem}\label{nestedR}
Let $[a_1,b]$ and $ [a_2,b]$ be compact intervals in $\mathbb{R}$ such that $a_1<a_2$.
Let $f,g$ be homeomorphisms satisfying:
\begin{enumerate}
\item The supports of $f$ and $g$ are contained in $[a_1,b]$ and $[a_2,b]$ respectively.
\item $f$ is an increasing map on $(a_1,b)$.
\item $f,g$ agree on the interval $[f(a_2), b]$.
\end{enumerate}
Then $\langle f,g \rangle\cong F$.
\end{lem}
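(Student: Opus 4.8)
The plan is to obtain Lemma~\ref{nestedR} directly from Lemma~\ref{nestedL} by conjugating the whole configuration through an orientation--reversing homeomorphism of $\R$; the two lemmas are literally mirror images of one another. Let $\sigma\in\Homeo(\R)$ be the reflection $\sigma(x)=-x$. Conjugation $c_\sigma\co h\mapsto\sigma h\sigma^{-1}$ is an automorphism of $\Homeo(\R)$ that carries $\Homeo^+(\R)$ onto itself (an orientation--preserving homeomorphism conjugated by an orientation--reversing one is again orientation--preserving), and it restricts to an isomorphism $\langle f,g\rangle\to\langle\sigma f\sigma^{-1},\sigma g\sigma^{-1}\rangle$. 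So it is enough to check that the pair $(f',g')=(\sigma f\sigma^{-1},\sigma g\sigma^{-1})$ meets the hypotheses of Lemma~\ref{nestedL}. Under $\sigma$ the nested intervals $[a_2,b]\subset[a_1,b]$, which share the \emph{right} endpoint $b$, become $[-b,-a_2]\subset[-b,-a_1]$, which share the \emph{left} endpoint $-b$; with $a=-b$, $b_1=-a_2$, $b_2=-a_1$ one has $b_1<b_2$, and $\supp f'\subseteq[a,b_2]$, $\supp g'\subseteq[a,b_1]$. Hypothesis $(2)$ flips: if $f$ pushes every point of $(a_1,b)$ to the right, then $f'$ pushes every point of $(a,b_2)$ to the left, so $f'$ is decreasing there. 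For $(3)$, $\sigma$ sends the agreement set $[f(a_2),b]$ to $[a,-f(a_2)]$, and $-f(a_2)=\sigma f\sigma^{-1}(-a_2)=f'(b_1)$, so $f'$ and $g'$ agree on $[a,f'(b_1)]$; this is exactly condition $(3)$ of Lemma~\ref{nestedL}. That lemma then gives $\langle f',g'\rangle\cong F$, and hence $\langle f,g\rangle\cong F$.

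Alternatively one can argue Lemma~\ref{nestedR} (equivalently Lemma~\ref{nestedL}) from scratch. Normalizing so that $f(x)\ge x$ on $[a_1,b]$, set $g_0=g$ and $g_n=f^ngf^{-n}$ for $n\ge1$, so $\supp g_n=[f^n(a_2),b]$ and these intervals shrink to $\{b\}$. The whole force of hypothesis $(3)$ is that it makes $g_i$ coincide with $f$ on $[f^{i+1}(a_2),b]$ for every $i\ge0$: if $x\in[f^{i+1}(a_2),b]$ then $f^{-i}(x)\in[f(a_2),b]$, where $g=f$, so $g_i(x)=f^igf^{-i}(x)=f(x)$. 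Consequently $f^{-1}g_i$ is the identity on $[f^{i+1}(a_2),b]$, which contains $\supp g_j$ for every $j>i$; so $f^{-1}g_i$ and $g_j$ have disjoint supports, hence commute, which unwinds to $g_ig_jg_i^{-1}=fg_jf^{-1}=g_{j+1}$ for $0\le i<j$ (and $fg_{n-1}f^{-1}=g_n$ holds by definition). These are precisely the defining relations of the presentation $\langle y_0,y_1,\dots\mid y_{n+1}=y_ky_ny_k^{-1}\ (0\le k<n)\rangle$, which is isomorphic to the standard presentation of $F$ via $y_i\mapsto x_i^{-1}$. Thus $y_0\mapsto f$, $y_1\mapsto g$ (hence $y_{n+1}\mapsto g_n$) defines a surjection $F\surj\langle f,g\rangle$.

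Injectivity of this surjection is the only step with any content, and it is handled structurally rather than by a ping--pong argument: every nontrivial normal subgroup of $F$ contains $[F,F]$ --- equivalently, every proper quotient of $F$ is abelian --- which is a standard consequence of the simplicity of $[F,F]$ (Theorem~\ref{thm:simple}; see~\cite{CFP96}). So if the surjection had nontrivial kernel then $\langle f,g\rangle$ would be abelian; but it is not, since $gf(a_2)=g(f(a_2))=f^2(a_2)\ne f(a_2)=fg(a_2)$, using $g(a_2)=a_2$ and $g=f$ on $[f(a_2),b]$. Hence the surjection is an isomorphism. Either way, the one thing that needs care is the bookkeeping: which endpoint the two intervals share, whether the generators displace points to the right or to the left, and which conjugation convention makes the Thompson relations come out with the correct index shift. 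Once those are pinned down, hypothesis $(3)$ does all of the real work.
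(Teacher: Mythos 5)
Your second argument is essentially the paper's: verify the Thompson relations to get a surjection $F\twoheadrightarrow\langle f,g\rangle$, then invoke non-commutativity together with the fact that every proper quotient of $F$ is abelian. The only cosmetic difference is that the paper checks the two relators $[fg^{-1},f^{-1}gf]$ and $[fg^{-1},f^{-2}gf^2]$ of the finite presentation, whereas you verify the full family $g_ig_jg_i^{-1}=g_{j+1}$ of the infinite presentation, which is a bit more work for the same conclusion. Your first argument---conjugating by the reflection $\sigma(x)=-x$ to turn a right-nested configuration into a left-nested one---is a clean and correct reduction of Lemma~\ref{nestedR} to Lemma~\ref{nestedL}; it makes explicit the mirror symmetry that the paper exploits implicitly by proving both lemmas in a single breath, and would let one record only one of the two computations. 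One small imprecision in the direct argument: $\supp g_n$ is \emph{contained in} $[f^n(a_2),b]$ rather than necessarily equal to it, but containment is all the disjoint-supports step needs, so nothing is affected.
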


\begin{proof}[Proofs of Lemmas~\ref{nestedL} and~\ref{nestedR}] The proofs of both lemmas above follow from checking that the homeomorphisms $f$ and $g$ in each lemma
satisfy the relations $$[fg^{-1},f^{-1}gf]=1\qquad [fg^{-1}, f^{-2}g f^2]=1$$
Since $f$ and $g$ do not commute, and since every proper quotient of $F$ is abelian (see Theorem 4.3 of~\cite{CFP96}), they generate a group isomorphic to $F$.  
\end{proof}

\subsection{Orderable extensions of $P$}\label{sec:extension}

We will use the following standard facts from the theory of orderable groups:

\begin{lem}[See Remark 2.1.5 of~\cite{DeroinNavasRivas}]\label{lem:extension}
Let $1\to K\to G\to Q\to 1$ be an exact sequence of groups, and suppose that $Q$ and $K$ are left orderable. 
Then $G$ admits a left ordering which agrees with any prescribed ordering on $K$. Moreover, any countable, left orderable group can be embedded in $\Homeo^+(I)$.
\end{lem}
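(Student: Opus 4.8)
The plan is to prove the two assertions in turn; they are essentially independent, and both are standard (cf.\ the cited reference). \emph{The lexicographic order on $G$.} Fix left-invariant positive cones $P_K\subseteq K$ and $P_Q\subseteq Q$, i.e.\ sub-semigroups with $K=P_K\sqcup P_K^{-1}\sqcup\{1\}$ and $Q=P_Q\sqcup P_Q^{-1}\sqcup\{1\}$, where $P_K$ is the cone of the prescribed order on $K$. Let $\pi\colon G\to Q$ be the quotient map, so $\ker\pi=K$ and $\pi(K)=\{1\}$. I would set
\[
P_G=\pi^{-1}(P_Q)\cup P_K,
\]
where $P_K$ is viewed inside $G$ via $K\le G$, and then verify the three cone axioms. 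Disjointness from $P_G^{-1}$ and the partition $G=P_G\sqcup P_G^{-1}\sqcup\{1\}$ follow at once from the corresponding partitions in $K$ and in $Q$, using that $\pi^{-1}(P_Q)$ and $\pi^{-1}(P_Q^{-1})$ are disjoint, both disjoint from $K$, and that their union with $K$ is all of $G$. For the semigroup property $P_G\cdot P_G\subseteq P_G$ I split into cases: if at least one factor lies in $\pi^{-1}(P_Q)$ and the other in $P_K\cup\pi^{-1}(P_Q)$, then $\pi$ of the product lies in $P_Q$ (using that $P_Q$ is a semigroup and $\pi(K)=\{1\}$); and if both factors lie in $P_K$, then so does the product. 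No conjugation-invariance of $P_K$ is needed, since a one-sided (left) order requires only the semigroup property of the cone. The order $g<h\iff g^{-1}h\in P_G$ is then automatically left-invariant, and it restricts to the given order on $K$ because $P_G\cap K=P_K$.

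\emph{Dynamical realization.} For the second assertion, assume $G$ is countably infinite (a finite group is trivially realized) and enumerate $G=\{g_0=1,g_1,g_2,\dots\}$. I would build an order-preserving injection $t\colon G\to\R$ by recursion: put $t(g_0)=0$; having placed $g_0,\dots,g_n$, place $g_{n+1}$ at $\big(\max_{i\le n}t(g_i)\big)+1$ if $g_{n+1}$ is $<$-above all of $g_0,\dots,g_n$, at $\big(\min_{i\le n}t(g_i)\big)-1$ if it is below all of them, and at the midpoint $\tfrac12\big(t(g_i)+t(g_j)\big)$ if $g_i<g_{n+1}<g_j$ with $g_i,g_j$ consecutive among $g_0,\dots,g_n$. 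This produces an order-embedding whose image is unbounded above and below. Left multiplication gives an order-preserving action of $G$ on $t(G)$ by $g\cdot t(h)=t(gh)$, which is faithful since $g\cdot 0=t(g)\neq 0$ for $g\neq 1$. Each map $t(h)\mapsto t(gh)$ extends uniquely to an order-preserving homeomorphism of the closure $\overline{t(G)}$, which permutes the countably many bounded connected components of $\R\setminus\overline{t(G)}$ order-preservingly; extending it affinely across each such component yields an element of $\Homeo^+(\R)$. The resulting homomorphism $G\to\Homeo^+(\R)$ is injective, and composing with a homeomorphism $\R\cong(0,1)$ and extending by the identity at the endpoints (possible, as an orientation-preserving homeomorphism of $(0,1)$ always extends continuously to $[0,1]$ fixing $0$ and $1$) gives the embedding $G\hookrightarrow\Homeo^+(I)$.

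I expect the only step requiring real care to be the extension across gaps in the dynamical realization: one must check that $t(h)\mapsto t(gh)$ genuinely extends to a homeomorphism of $\overline{t(G)}$ — both it and its inverse $t(h)\mapsto t(g^{-1}h)$ are order-preserving surjections of $t(G)$, which forces the monotone extensions to be continuous and mutually inverse — and that the affine interpolation on the complementary intervals is compatible with the group law, so that $g\mapsto(\text{its extended homeomorphism})$ is a homomorphism rather than merely a map. Faithfulness is then automatic, since the elements already act distinctly on $t(G)\subset\R$. By contrast, the first assertion is a purely formal verification with positive cones and presents no obstacle.
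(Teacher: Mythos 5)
Your proof is correct, and it fills in exactly the standard arguments the paper (and its cited reference, Remark~2.1.5 of Deroin--Navas--Rivas) has in mind: the lexicographic positive cone $P_G=\pi^{-1}(P_Q)\cup P_K$ for the extension, and the dynamical realization followed by conjugating through $\R\cong(0,1)$ for the embedding into $\Homeo^+(I)$. The paper itself gives essentially no proof — it cites the first claim and dispatches the second with the single observation that $\R\cong(0,1)$ — so you have simply supplied the details the authors delegated to the reference, and your treatment of the two delicate points (restriction of the cone to $K$, and compatibility of the affine extension across gaps with the group law) is sound.
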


The second claim of the lemma is implied by the fact that $\R\cong (0,1)$.
The following lemma is obvious, after the observation that $F\times F$ is left orderable, applying the Brin--Squier Theorem~\cite{BS1985}, and Brin's Ubiquity Theorem~\cite{BrinJLMS99}:

\begin{lem}\label{lem:P}
The group $P$ is a two--generated sub-direct product of $F\times F$. It is a left orderable group which contains no free subgroups.
\end{lem}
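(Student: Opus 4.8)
The statement to prove, Lemma~\ref{lem:P}, asserts three things about $P$: that it is a two--generated subdirect product of $F \times F$, that it is left orderable, and that it contains no nonabelian free subgroups.

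\textbf{Proof plan.} The first assertion is essentially by construction. Recall $P = \langle (p_1, q_2), (p_2, q_1)\rangle < F_1 \times F_2$, where $F_1 = \langle p_1, p_2\rangle$ and $F_2 = \langle q_1, q_2\rangle$ are two marked copies of $F$. By definition $P$ is two--generated. To see that it is \emph{subdirect}, I would check that each of the two coordinate projections $\pi_i \colon F_1 \times F_2 \to F_i$ restricts to a surjection on $P$: the first projection sends the two generators of $P$ to $p_1$ and $p_2$, which generate $F_1$, and the second sends them to $q_2$ and $q_1$, which generate $F_2$. Hence both projections are onto, so $P$ is a subdirect product of $F \times F$.

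For left orderability, the cleanest route is to observe that $F \times F$ is left orderable. Indeed, $F$ itself is left orderable (it is a group of orientation--preserving homeomorphisms of $\mathbb{R}$, or one invokes Lemma~\ref{lem:extension} with the trivial kernel, or simply that $F < \Homeo^+(I)$), and a direct product of two left orderable groups is left orderable --- one can apply Lemma~\ref{lem:extension} to the split exact sequence $1 \to F \to F\times F \to F \to 1$, or just take the lexicographic order on the product of two orders. Since left orderability passes to subgroups, $P < F_1 \times F_2$ is left orderable.

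For the absence of nonabelian free subgroups, the point is that this property is inherited by subgroups and is preserved under forming direct products of finitely many factors. By the Brin--Squier Theorem (Theorem~\ref{thm:brinsquier}), $F$ contains no nonabelian free subgroup. If $F \times F$ contained a nonabelian free subgroup $L$, then the image of $L$ under the first coordinate projection would be a subgroup of $F$, hence would not contain a nonabelian free group, so it would be abelian or, more precisely, would have trivial intersection with the commutator subgroup in a controlled way; in any case its image in each factor lies in a group with no free subgroup, and a free group of rank $\geq 2$ is not a subgroup of a direct product $A \times B$ unless it embeds into one of the factors or has a nontrivial normal subgroup with quotient free --- the standard argument is that if $L$ is free of rank $\ge 2$ and $L < A \times B$, then $L$ intersects one of the factors in a nontrivial normal subgroup $N \trianglelefteq L$, and $L/N$ embeds in the other factor; since a nontrivial normal subgroup of a nonabelian free group is again nonabelian free of infinite rank, and a quotient of a free group that is free is again free, one of $A$, $B$ must contain a nonabelian free subgroup, contradiction. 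Applying this with $A = B = F$, we conclude $F\times F$, and hence its subgroup $P$, contains no nonabelian free subgroup.

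\textbf{Main obstacle.} None of the three steps is genuinely hard; the only step requiring a small argument rather than a one--line citation is the claim that $F \times F$ has no nonabelian free subgroup, which rests on the elementary fact about free subgroups of direct products sketched above. This is why the excerpt describes the lemma as ``obvious'' after invoking Brin--Squier and Brin's Ubiquity Theorem --- the latter is not actually needed for the statement as written (it is relevant for the companion fact, mentioned earlier, that $F$ embeds in $P$), so I would simply omit it from the proof of Lemma~\ref{lem:P} itself.
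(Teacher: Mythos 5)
Your approach is essentially the paper's; the paper simply declares the lemma ``obvious'' after invoking the left orderability of $F \times F$, Brin--Squier, and Brin's Ubiquity, and you correctly identify that Ubiquity is not actually needed for the lemma as stated (it serves only the adjacent remark that $F$ embeds in $P$). The subdirect and left-orderability steps are fine.

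One step in your free-subgroup argument is misstated: you assert that a nonabelian free $L < A \times B$ ``intersects one of the factors in a nontrivial normal subgroup $N \trianglelefteq L$.'' This is false in general --- a diagonally embedded free group in $A \times A$ meets both coordinate factors trivially. The correct dichotomy, which you gesture at with ``unless it embeds into one of the factors,'' is: either some coordinate projection $\pi$ restricted to $L$ is injective, so $L$ embeds in that factor; or else $N := \ker(\pi|_L)$ is a nontrivial normal subgroup of $L$, hence itself nonabelian free (not ``of infinite rank'' in all cases as you write --- of finite index it is free of finite rank $\geq 2$), and $N$ sits inside the other factor. Either way a factor acquires a nonabelian free subgroup, contradicting Brin--Squier. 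The stray clause ``a quotient of a free group that is free is again free'' should be deleted --- it plays no role once the argument is phrased as above. Finally, note that the paper has already recorded that $P$ is isomorphic to a subgroup of $F$ itself, so one can also just apply Brin--Squier directly to $P$ through that embedding and avoid the direct-product lemma entirely.
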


Let $\mathcal{R}<{\bf F_2}=\langle A,B\rangle$ be such that ${\bf F_2}/\mathcal{R}\cong P$. Here, the generators $A$ and $B$ of ${\bf F_2}$
get sent to the generators $(p_1,q_2)$ and $(p_2,q_1)$ respectively.
Note that since $P<F\times F$, we have $\mathcal{R}\neq 1$. Let $\mathcal{R}_k$ denote the $k^{th}$ term of the derived series of $\mathcal{R}$ and let $\mathcal{R}^k$ denote the $k^{th}$ term of the lower central series of $\mathcal{R}$, with the convention $\mathcal{R}_1=\mathcal{R}^1=\mathcal{R}$.

\begin{lem}\label{lem:solvable}
For each $k\geq 1$, the groups \[S_k=\langle A,B\mid \mathcal{R}_k\rangle\,\,\,\,\textrm{ and } \,\,\,\, N_k=\langle A,B\mid \mathcal{R}^k\rangle\] are marked, left orderable extensions of $P$.
\end{lem}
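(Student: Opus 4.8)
The plan is to show that each $S_k$ and $N_k$ surjects onto $P$ in a marked way, and then to establish left orderability by an iterated extension argument using Lemma~\ref{lem:extension}.

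\textbf{Marked extension structure.} First I would observe that since $\mathcal{R}_k$ and $\mathcal{R}^k$ are subgroups of $\mathcal{R}$, there are natural surjections ${\bf F_2}/\mathcal{R}_k \surj {\bf F_2}/\mathcal{R} \cong P$ and ${\bf F_2}/\mathcal{R}^k \surj {\bf F_2}/\mathcal{R}\cong P$, induced by the identity on ${\bf F_2}=\langle A,B\rangle$. Under these maps the images of $A$ and $B$ are exactly $(p_1,q_2)$ and $(p_2,q_1)$, so by definition $S_k$ and $N_k$ are marked extensions of $P$. (One should also note $\mathcal{R}_k, \mathcal{R}^k \neq 1$: since $\mathcal{R}\neq 1$ is a nontrivial subgroup of a free group, it is itself free of some rank $\geq 1$, hence residually nilpotent and residually (torsion-free solvable), so all terms of its derived and lower central series are nontrivial; thus $S_k$ and $N_k$ are genuinely distinct from $P$ when $k\geq 2$, though this is not strictly needed for the statement.)

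\textbf{Left orderability.} The kernel of $S_k \surj P$ is $\mathcal{R}/\mathcal{R}_k$, and the kernel of $N_k\surj P$ is $\mathcal{R}/\mathcal{R}^k$. Both are quotients of the free group $\mathcal{R}$ by a term of its derived (resp. lower central) series, so $\mathcal{R}/\mathcal{R}_k$ is solvable and $\mathcal{R}/\mathcal{R}^k$ is nilpotent; in either case the kernel is a finitely generated (indeed, at most countable) solvable or nilpotent group, and such groups are built from finitely many central or abelian-by-smaller extensions, so they are left orderable — one applies Lemma~\ref{lem:extension} inductively along the derived (resp. lower central) series, starting from the fact that finitely generated torsion-free abelian quotients, and more generally the abelian subquotients appearing, are left orderable. (Here one uses that subgroups of free groups are torsion-free, so the successive abelian quotients are torsion-free abelian, hence orderable.) Since $P$ is left orderable by Lemma~\ref{lem:P} and the kernel is left orderable, Lemma~\ref{lem:extension} applied to
\[1 \to \mathcal{R}/\mathcal{R}_k \to S_k \to P \to 1, \qquad 1 \to \mathcal{R}/\mathcal{R}^k \to N_k \to P \to 1\]
shows $S_k$ and $N_k$ are left orderable.

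\textbf{Main obstacle.} The only subtle point is verifying that the kernels $\mathcal{R}/\mathcal{R}_k$ and $\mathcal{R}/\mathcal{R}^k$ are left orderable. This is where one must be slightly careful: left orderability is not automatically closed under extension in general, but it \emph{is} closed under extension when one can prescribe the ordering on the kernel (which is exactly the content of Lemma~\ref{lem:extension}), so the inductive argument along the series goes through. The base cases are torsion-free abelian groups (quotients of subgroups of a free group by their commutator subgroups, and the analogous subquotients in the lower central series), which are left orderable as they embed into a $\Q$-vector space. This is routine but deserves a sentence. Everything else is formal.
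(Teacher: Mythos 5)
Your proof follows the same overall route as the paper's: observe that the marked surjections $S_k\surj P$, $N_k\surj P$ exist because $\mathcal{R}_k,\mathcal{R}^k\subset\mathcal{R}$, reduce left orderability of $S_k$, $N_k$ to that of the kernels $\mathcal{R}/\mathcal{R}_k$, $\mathcal{R}/\mathcal{R}^k$ via Lemma~\ref{lem:extension}, and then run an induction on $k$ along the derived or lower central series, using that the successive abelian subquotients are orderable. This matches the paper, which identifies the abelian subquotients as $\Z^\infty$.

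However, two points in your justification are incorrect as stated. First, the kernel $\mathcal{R}/\mathcal{R}_k$ is not finitely generated: $\mathcal{R}$ is a nontrivial normal subgroup of infinite index in $\mathbf{F}_2$, hence is free of infinite rank, so already $\mathcal{R}/\mathcal{R}_2\cong\Z^\infty$ is infinitely generated. This is harmless because the induction has only $k$ steps, not because the kernel is finitely generated, so you should drop that phrase. Second, and more seriously, the claim ``subgroups of free groups are torsion-free, so the successive abelian quotients are torsion-free abelian'' is not a valid inference: a torsion-free group can have torsion in its abelianization (for instance, the Klein bottle group $\langle a,b\mid abab^{-1}\rangle$ is torsion-free but abelianizes to $\Z\oplus\Z/2\Z$). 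What you actually need is the stronger fact that each $\mathcal{R}_k$ is itself \emph{free} (Nielsen--Schreier), so that $\mathcal{R}_k/\mathcal{R}_{k+1}=\mathcal{R}_k/[\mathcal{R}_k,\mathcal{R}_k]$ is the abelianization of a free group and hence free abelian; and for the lower central series you need the Magnus--Witt theorem that the lower central quotients $\mathcal{R}^k/\mathcal{R}^{k+1}$ of a free group are free abelian. With those corrections your argument coincides with the paper's.
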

\begin{proof}
It is clear that for each $k$, the groups $S_k$ and $N_k$ are quotients of the free group $\bf F_2$ via the canonical map. 
Since $\mathcal{R}_k,\mathcal{R}^k\subset\mathcal{R}$, we have that $S_k$ and $N_k$ both surject to $P$ simply by imposing the relations in $\mathcal{R}$. It therefore suffices to show that $S_k$ and $N_k$ are both left orderable, which since $P$ is left orderable, reduces to showing that $\mathcal{R}/\mathcal{R}_k$ and $\mathcal{R}/\mathcal{R}^k$ are left orderable by Lemma \ref{lem:extension}.

Since $\mathcal{R}$ is an infinitely generated free group, these quotients are merely the universal $k$--step solvable and nilpotent quotients of the infinitely generated free group. We proceed by induction on $k$. The case $k=1$ is trivial, and in the case $k=2$, we obtain the group $\Z^{\infty}$ which is easily seen to be left orderable. By induction, $\mathcal{R}/\mathcal{R}_k$ (resp. $\mathcal{R}/\mathcal{R}^k$) is left orderable, and $\mathcal{R}_k/\mathcal{R}_{k+1}$ (resp. $\mathcal{R}^k/\mathcal{R}^{k+1}$) is again isomorphic to $\Z^{\infty}$, so the conclusion follows by applying Lemma \ref{lem:extension} again.
\end{proof}

\subsection{Building square roots of $F$}\label{subsec:building}

In this section we provide a recipe that produces a square root of $F$ that contains a given group $H$ as a subgroup, provided $H$ is an orderable marked extension of $P$.

{\bf Step 1}: Partition $[1,2)$ into left closed, right open intervals $\{J_1,\ldots,J_{16}\}$ so that $J_i$ 
occurs to the left of $J_j$ in $\mathbb{R}$
whenever $i<j$.
Moreover, we require that these intervals are of the same length.
For ease of notation, we denote by $J_X$ for some $X\subset \{1,\ldots,16\}$ the union $\bigcup_{i\in X} J_i$.
For example, $$J_{\{1,\ldots,4\}}=J_1\cup J_2\cup J_3\cup J_4.$$

{\bf Step 2}: Construct homeomorphisms $f$ and $g$ of the real line that satisfy the following:
\begin{enumerate}
\item $f$ and $g$ are increasing maps on $(0,2)$ and on $(1,3)$, respectively, and equal the identity outside these respective intervals.
\item $f$ maps $J_i$ isometrically onto $J_{i+4}$ for $1\leq i\leq 11$.
\item $g$ maps $J_i$ isometrically onto $J_{i+4}$ for $2\leq i\leq 12$.
\item The map $gf^{-1}$ has two components of support, which are $$[0,1]\cup J_1,\qquad J_{\{12,\ldots,16\}}\cup [2,3].$$
\end{enumerate}

It is elementary to construct homeomorphisms $f$ and $g$ that satisfy (1)--(3) above.
If $f$ and $g$ satisfy (1)--(3), then it holds that $gf^{-1}$ is the identity on $J_{\{2,\ldots,11\}}$.
Hence the support of $gf^{-1}$ is contained in 
$$[0,1]\cup J_1\bigcup J_{\{12,\ldots,16\}}\cup [2,3].$$
To ensure that the components of support of $gf^{-1}$ are precisely as stated in $(4)$,
we choose $g$ such that it is sufficiently slow on $J_1$,
and $f$ so that it is sufficiently slow on $J_{\{12,\ldots,16\}}$.
Note that $gf^{-1}$ is decreasing on the interior of $[0,1]\cup J_1$ and increasing on the interior of $J_{\{12,\ldots,16\}}\cup [2,3]$.

{\bf Step 3}: Let $H=\langle h_1,h_2\rangle$ be a marked, left orderable extension of $P$.
We identify the elements $h_1$ and $h_2$ with their dynamical realizations, both supported on the interval $J_6$. Here, by
\emph{dynamical realization} of a countable left orderable group $H$, we mean an embedding of into $\Homeo^+(I)$ (see Proposition
1.1.8 of~\cite{DeroinNavasRivas} or Theorem 2.2.19 of~\cite{Navas2011}).
Define a map $h_3$ as: $$h_3=g^{-1}h_2g=f^{-1}h_2f.$$
By definition, $h_3$ is supported on the interval $J_{10}$.
Finally, we define homeomorphisms $$\lambda_1= h_1^{-1} h_3^{-1} f,\qquad \lambda_2=g.$$

Our goal for the rest of this section will be to demonstrate the following:

\begin{prop}\label{main}
The group $\langle \lambda_1,\lambda_2\rangle$ is a marked square root of $F$ which contains $H$ as a subgroup. 
\end{prop}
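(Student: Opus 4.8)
The plan is to verify two things: first, that $\langle\lambda_1^2,\lambda_2^2\rangle\cong F$, and second, that $H<\langle\lambda_1,\lambda_2\rangle$. For the first part, I would compute $\lambda_1^2$ and $\lambda_2^2=g^2$ explicitly in terms of the building blocks. The key observation should be that $h_1,h_3$ are supported on $J_6$ and $J_{10}$ respectively, while $f$ shifts $J_i$ to $J_{i+4}$; so conjugating $h_1^{-1}h_3^{-1}$ by $f$ moves its support rightward, and in the product $\lambda_1^2=h_1^{-1}h_3^{-1}f h_1^{-1}h_3^{-1}f$ the two "perturbation" factors land on disjoint intervals, allowing one to disentangle them. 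I expect $\lambda_1^2$ to simplify to something of the form $(\text{perturbation supported on } J_{\{6,10,14\}})\cdot f^2$, and more importantly I expect the pair $(\lambda_1^2,\lambda_2^2)$ to fall under the hypotheses of one of the nested-generator lemmas (Lemma~\ref{nestedL} or Lemma~\ref{nestedR}): $f^2$ is supported on $(0,2)$ roughly, $g^2$ on $(1,3)$, these are genuinely nested at one end after accounting for the perturbations, one of the two maps is monotone (increasing) on the relevant open interval, and the two maps agree on a suitable terminal subinterval because the perturbations $h_1,h_3$ are killed there. Cranking through the support bookkeeping with the explicit intervals $J_1,\dots,J_{16}$ and the relation $h_3=f^{-1}h_2f=g^{-1}h_2g$ should confirm that hypothesis (3) of the relevant lemma holds, giving $\langle\lambda_1^2,\lambda_2^2\rangle\cong F$.

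For the second part, I would recover $h_1$ and $h_2$ (hence all of $H=\langle h_1,h_2\rangle$) as words in $\lambda_1,\lambda_2$. Since $\lambda_2=g$ and we have access to $f$-conjugation via $g$-conjugation on the relevant support (the relation $g^{-1}h_2g=f^{-1}h_2f$ is designed precisely so that $f$ and $g$ act the same way in the relevant place), one should be able to write $f$ itself, or at least $f$ restricted to where it matters, as a word in $\lambda_1,\lambda_2$ together with elements already shown to lie in the group; then $h_1^{-1}h_3^{-1}=\lambda_1 f^{-1}$ becomes available, and separating $h_1$ (supported on $J_6$) from $h_3$ (supported on $J_10$) is a matter of conjugating by an element that moves one support off the other — for instance $g$ or $f$ itself conjugates $J_6$ to $J_{10}$, so an appropriate commutator trick isolates each factor. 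Finally $h_3=f^{-1}h_2 f$ recovers $h_2$ from $h_3$. I would present this as: exhibit explicit words $w_1(\lambda_1,\lambda_2)=h_1$, $w_2(\lambda_1,\lambda_2)=h_2$, which then gives the injection $H\hookrightarrow\langle\lambda_1,\lambda_2\rangle$ (injectivity being automatic since these are honest homeomorphisms and $H$ was given as a concrete subgroup of $\Homeo^+(I)$ via its dynamical realization).

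The main obstacle, I expect, is the first part — confirming that $\langle\lambda_1^2,\lambda_2^2\rangle$ is exactly $F$ rather than merely some two-prechain group surjecting onto $F$. The perturbations $h_1,h_3$ are arbitrary (coming from an arbitrary left orderable $H$), so the danger is that $\lambda_1^2$ fails to agree with $\lambda_2^2$ on the terminal interval required by Lemma~\ref{nestedL}/\ref{nestedR}, or fails monotonicity. The resolution must be that the specific placement of the $J_i$'s (in particular putting $\supp h_1=J_6$, $\supp h_3=J_{10}$, and demanding $\supp(gf^{-1})$ be exactly $[0,1]\cup J_1 \ \cup\ J_{\{12,\dots,16\}}\cup[2,3]$ via the "sufficiently slow" conditions in Step 2) forces all the perturbation to disappear on the overlap region $J_{\{2,\dots,11\}}$ after squaring, so that $\lambda_1^2$ and $\lambda_2^2$ differ from a genuine nested pair only by something supported away from where the nested-generator lemma needs agreement. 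So the crux is a careful, interval-by-interval computation of the supports and values of $\lambda_1^2$ and $\lambda_2^2$, checking the three hypotheses of the appropriate nested lemma; once that is done, Lemma~\ref{nestedL} or~\ref{nestedR} closes the argument, and the containment $H<\langle\lambda_1,\lambda_2\rangle$ is comparatively routine bookkeeping with conjugations.
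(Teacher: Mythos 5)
Your two-part plan --- first show $\langle\lambda_1^2,\lambda_2^2\rangle\cong F$, then embed $H$ --- matches the paper's outline, but both halves go off track.

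In Part 1 you reach for the nested-generator Lemmas~\ref{nestedL}/\ref{nestedR}. But $\supp\lambda_1^2=(0,2)$ (the perturbations $h_1,h_3$ are supported inside $(1,2)$ and cannot enlarge or shrink the support of $f$) while $\supp\lambda_2^2=(1,3)$: these form a \emph{chain}, not a nested pair, so neither nested lemma applies. The correct and much shorter route is Lemma~\ref{lem:dyn criterion}: one only has to check $\lambda_2^2\lambda_1^2(1)\ge 2$, which follows at once from conditions (2) and (3) of Step~2, since $\lambda_1^2(1)=f^2(1)\in \overline{J_9}$ and $g^2$ carries $\overline{J_9}$ past $2$.

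In Part 2 you aim to exhibit words with $w_1(\lambda_1,\lambda_2)=h_1$ and $w_2(\lambda_1,\lambda_2)=h_2$ as \emph{literal} equalities of homeomorphisms, so that the copy of $H$ sitting on $J_6$ is an honest subset of $\langle\lambda_1,\lambda_2\rangle$. That is a much stronger claim than the proposition makes and is not what happens. The proposition asserts only that $G$ contains a subgroup \emph{isomorphic} to $H$, and that subgroup is $\langle\lambda_2\lambda_1^{-1},\,\lambda_1^{-1}\lambda_2\rangle$, whose generators have components of support on $[0,1]\cup J_1$, $J_6$, $J_{10}$, $J_{\{12,\dots,16\}}\cup[2,3]$, etc.\ --- not on $J_6$ alone. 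Your "conjugate to separate $h_1$ from $h_3$" step cannot erase the outer pieces of support: nothing a priori forces $\langle\lambda_1,\lambda_2\rangle$ to contain any nontrivial element supported purely inside $J_6$, and in particular nothing lets you build $f$ itself, or $h_1^{-1}h_3^{-1}$, from $\lambda_1,\lambda_2$.

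The mechanism you are missing --- and the reason the hypothesis "marked extension of $P$" appears at all --- is the component-by-component argument of Proposition~\ref{mainsub}. On $J_{10}$, the restrictions of $\lambda_1^{-1}\lambda_2$ and $\lambda_2\lambda_1^{-1}$ give a dynamical realization of $H$, which makes the assignment $h_1\mapsto\lambda_1^{-1}\lambda_2,\ h_2\mapsto\lambda_2\lambda_1^{-1}$ injective. On $J_6$, $\lambda_1^{-1}\lambda_2$ restricts to the identity, so the restriction factors through the abelianization $\Z^2$ of $H$. On the two outer components, the restrictions are the homeomorphisms $p_1,p_2,q_1,q_2$, which --- this is where Lemmas~\ref{nestedL} and~\ref{nestedR} actually earn their keep --- generate copies of $F$ and jointly realize the marked group $P$. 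Since $H$ surjects onto $P$ compatibly with markings, every relation of $H$ holds on every component of support, so the assignment extends to a well-defined embedding $H\hookrightarrow\langle\lambda_1,\lambda_2\rangle$. Without checking the relations component by component in this way, you cannot conclude that $H$ embeds.
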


The group $\langle \lambda_1,\lambda_2\rangle$ is manifestly orderable, since it is presented as a group of orientation preserving homeomorphisms of the interval. It is clear by our construction that $\lambda_1^2$ and $ \lambda_2^2$ satisfy the dynamical condition of Lemma \ref{lem:dyn criterion},
and hence generate a copy of $F$.
So it suffices to show that $H<\langle \lambda_1,\lambda_2\rangle$.

\begin{prop}\label{mainsub}
The elements $\lambda_2\lambda_1^{-1}$ and $\lambda_1^{-1}\lambda_2$ generate an isomorphic copy of $H$.
\end{prop}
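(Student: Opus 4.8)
The plan is to compute the two generators $\lambda_2\lambda_1^{-1}$ and $\lambda_1^{-1}\lambda_2$ explicitly in terms of $f$, $g$, $h_1$, $h_2$, $h_3$, and to recognize that they generate a group which surjects onto $H$ in the obvious way and also embeds into $H$, hence is isomorphic to $H$. First I would substitute $\lambda_1 = h_1^{-1}h_3^{-1}f$ and $\lambda_2 = g$ and simplify using the support data from Step 2. Since $h_1$ is supported on $J_6$, $h_3$ on $J_{10}$, $f$ on $(0,2)$, $g$ on $(1,3)$, and $gf^{-1}$ has support precisely $[0,1]\cup J_1 \,\cup\, J_{\{12,\dots,16\}}\cup[2,3]$, these elements have pairwise disjoint or nested supports in convenient ways. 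For example, $\lambda_2\lambda_1^{-1} = g f^{-1} h_3 h_1$; here $gf^{-1}$ is supported off $J_6$ and off $J_{10}$, so $h_1$ and $h_3$ commute with... wait — no, $h_3 h_1$ commutes with nothing a priori, but $h_3$ and $h_1$ have disjoint supports ($J_{10}$ vs $J_6$) so $h_3 h_1 = h_1 h_3$, and both commute with $gf^{-1}$ since $\supp(gf^{-1})$ is disjoint from $J_6\cup J_{10}$. Thus $\lambda_2\lambda_1^{-1} = (gf^{-1})\cdot h_1 h_3$ is a product of three commuting pieces with disjoint supports. Similarly I would rewrite $\lambda_1^{-1}\lambda_2 = f^{-1}h_3 h_1 g$; using $h_3 = f^{-1}h_2 f = g^{-1}h_2 g$, one rearranges to express this as another product of pieces with controlled supports, the key point being that conjugating $h_3$ or $h_1$ by $f$ or $g$ shifts their supports among the $J_i$.

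The second step is to extract from these two formulas a generating set for $K := \langle \lambda_2\lambda_1^{-1},\,\lambda_1^{-1}\lambda_2\rangle$ that visibly contains dynamical copies of $h_1$ and $h_2$ (hence all of $H$), and simultaneously to see that $K$ maps onto $H$. For the embedding $H \hookrightarrow K$: by taking suitable commutators or conjugates of the two generators and using the disjointness of supports, I expect to isolate $h_1$ (supported on $J_6$) and some conjugate of $h_2$, and then recover $h_2$ itself; since $\langle h_1, h_2\rangle$ is the dynamical realization of $H$, this gives $H < K$. For the surjection $K \twoheadrightarrow H$: the marked extension structure $H \twoheadrightarrow P$ combined with how $gf^{-1}$, $f$, $g$ act as elements of $F$ on the chain should yield a homomorphism $\langle\lambda_1,\lambda_2\rangle \to$ (something) restricting to $K \to H$ that is inverse to the inclusion on the relevant generators. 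Concretely, I would build a retraction by sending the "$gf^{-1}$" and "$f,g$ translation" parts to the identity and the $h_i$ parts identically, checking this respects all relations — or more cleanly, observe that $H$ sits as a retract because its support $J_6$ is disjoint from the supports of the ambient chain dynamics.

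The main obstacle I anticipate is the surjection, or rather pinning down exactly that $K$ is not \emph{larger} than $H$: a priori $\lambda_2\lambda_1^{-1}$ and $\lambda_1^{-1}\lambda_2$ could generate extra relations' worth of homeomorphisms coming from the interplay of $f$, $g$, and the conjugations defining $h_3$. The clean way around this is the trick implicit in the name "skew subdirect product" and in the structure of $P = \langle(p_1,q_2),(p_2,q_1)\rangle < F_1\times F_2$: the two generators $\lambda_2\lambda_1^{-1} = (gf^{-1})h_1h_3$ and $\lambda_1^{-1}\lambda_2$ should be arranged so that, modulo the copy of $H$ on $J_6$, their images in the relevant copies of $F$ are exactly the "crossed" generators of $P$, and $P$ is where these formally live; then the left orderability of $H$ and the universal property of the marked extension $H \to P$ force $K \cong H$. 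So the real content is a bookkeeping computation identifying, for each generator of $K$, which interval $J_i$ each constituent homeomorphism is supported on after all conjugations, and then invoking the disjoint-support commutation relations together with Lemma~\ref{lem:dyn criterion}-type recognition for the $F$-pieces; I would present this as a short sequence of displayed identities followed by the two inclusions $H < K$ and $K < H$.
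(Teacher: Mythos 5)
Your opening calculation is on the right track: writing $\lambda_2\lambda_1^{-1}=(gf^{-1})h_1h_3$ as a product of commuting pieces with disjoint supports, and similarly decomposing $\lambda_1^{-1}\lambda_2$, is exactly how the paper begins. But several of the steps you sketch afterward either do not work or miss the crux, and one confusion propagates throughout.

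First, you have the intervals swapped. The faithful dynamical copy of $H$ inside $K=\langle\lambda_2\lambda_1^{-1},\lambda_1^{-1}\lambda_2\rangle$ lives on $J_{10}$, where the two generators restrict to $f^{-1}h_2f$ and $f^{-1}h_1f$ respectively. On $J_6$ the restrictions are $h_1$ and $\mathrm{id}$, so the restriction of $K$ to $J_6$ is merely the cyclic group $\langle h_1\rangle$, not a copy of $H$; it is the $\mathbb{Z}$--quotient of $H$ obtained by killing the normal closure of $h_1$. Your proposed ``retraction onto $J_6$'' therefore lands in $\mathbb{Z}$, not in $H$.

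Second, the plan to ``isolate $h_1$ and a conjugate of $h_2$'' by commutators or conjugations inside $K$ cannot succeed. Every element of $K$ is a word $w(\lambda_1^{-1}\lambda_2,\lambda_2\lambda_1^{-1})$, and its value is determined simultaneously on all components of support; you cannot project to a single component from within $K$. Concretely, an element of $K$ supported only on $J_6$ would require $w(f^{-1}h_1f,f^{-1}h_2f)=\mathrm{id}$ on $J_{10}$, hence $w(h_1,h_2)=1$ in $H$, hence (since $H\twoheadrightarrow P$ and $P^{\mathrm{ab}}\cong\mathbb{Z}^2$) $w$ lies in the commutator subgroup of $\mathbf{F}_2$, hence $w(\mathrm{id},h_1)=\mathrm{id}$ on $J_6$ as well --- so the only such element is trivial. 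This argument, which defeats your ``isolate $h_1$'' idea, is in fact the heart of the paper's proof and is what you are missing.

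Third, your architecture of separately proving $H\hookrightarrow K$ and $K\twoheadrightarrow H$ is weaker than what is needed: two such maps do not by themselves give an isomorphism unless they are mutually inverse or one invokes Hopficity. The paper instead verifies that the single assignment $h_1\mapsto\lambda_1^{-1}\lambda_2$, $h_2\mapsto\lambda_2\lambda_1^{-1}$ is a well--defined homomorphism $H\to K$, which is automatically surjective (generators go to generators) and is injective because its composition with restriction to $J_{10}$ is a faithful dynamical realization of $H$. Well--definedness is checked component by component: on $J_{10}$ relations hold tautologically; on the two chain components the restrictions realize exactly the generators $(p_1,q_2)$ and $(p_2,q_1)$ of $P$ (verified via Lemmas~\ref{nestedL} and~\ref{nestedR}, not Lemma~\ref{lem:dyn criterion}), and relations hold because $H$ is a marked extension of $P$; on $J_6$ relations hold because, as above, every relator of $H$ lies in $[\mathbf{F}_2,\mathbf{F}_2]$ and the target $\langle h_1\rangle$ is abelian. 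Finally, your appeal to left orderability plays no role at this step and should be dropped.
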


\begin{proof}
The element $\lambda_2\lambda_1^{-1}$ has four components of support:
$$[0,1]\cup J_1,\qquad J_6,\qquad J_{10},\qquad J_{\{12,\ldots,16\}}\cup [2,3].$$
Note that:
$$\lambda_2\lambda_1^{-1}\restriction J_6= h_1,\qquad \lambda_2\lambda_1^{-1}\restriction J_{10}= h_3=f^{-1}h_2f.$$

We denote by $p_2$ the following restriction: 
$$\lambda_2\lambda_1^{-1}\restriction [0,1]\cup J_1= gf^{-1}\restriction [0,1]\cup J_1.$$
We denote by $q_1$ the following restriction:
$$\lambda_2\lambda_1^{-1}\restriction J_{\{12,\ldots,16\}}\cup [2,3]=gf^{-1}\restriction J_{\{12,\ldots,16\}}\cup [2,3].$$

The element $\lambda_1^{-1}\lambda_2$ has four components of support:
$$[0,1]\cup J_{\{1,\ldots,5\}},\qquad J_{10},\qquad J_{14},\qquad J_{16}\cup [2,3].$$
Note that 
$$\lambda_1^{-1}\lambda_2\restriction J_{10}= f^{-1}h_1 f.$$

Denote by $p_1$ the following restriction: 
$$\lambda_1^{-1}\lambda_2\restriction [0,1]\cup J_{\{1,\ldots,5\}}=f^{-1}g\restriction [0,1]\cup J_{\{1,\ldots,5\}}.$$
Denote by $q_2$ the following restriction:
$$\lambda_1^{-1}\lambda_2\restriction J_{\{14,\ldots,16\}}\cup [2,3].$$

First observe that the restrictions on $J_{10}$ are:
$$\lambda_1^{-1}\lambda_2\restriction J_{10}=f^{-1}h_1f\restriction J_{10},\qquad   
\lambda_2\lambda_1^{-1}\restriction J_{10}=h_3\restriction J_{10}=f^{-1} h_2 f\restriction J_{10}.$$
It follows that this restriction to $J_{10}$ corresponds to the isomorphism $$H\to \langle\lambda_1^{-1}\lambda_2\restriction J_{10}, \lambda_2\lambda_1^{-1}\restriction J_{10} \rangle,$$ 
given by $$h_2\mapsto \lambda_2\lambda_1^{-1}\restriction J_{10},\qquad h_1\mapsto \lambda_1^{-1}\lambda_2\restriction J_{10},$$
since these restrictions generate a dynamical realization of $H$ on $J_{10}$.

Next observe that 
$$\lambda_2\lambda_1^{-1}\restriction J_6=h_1\restriction J_6,\qquad \lambda_1^{-1}\lambda_2\restriction J_6=id\restriction J_6.$$
Since $H$ is a marked extension of $P$, every relation in $H$ is necessarily a product of commutators.
It follows that the abelianization of $H$ is $\mathbb{Z}^2$.
We have then that this restriction to $J_6$ corresponds to the quotient $$H\to \langle\lambda_1^{-1}\lambda_2\restriction J_6, \lambda_2\lambda_1^{-1}\restriction J_6 \rangle,$$ 
given by $$h_2\mapsto \lambda_2\lambda_1^{-1}\restriction J_6,\qquad h_1\mapsto \lambda_1^{-1}\lambda_2\restriction J_6.$$
which is a homomorphism whose kernel is the normal closure of $h_1$ in $H$.

Next, we observe that by construction, 
the maps $p_1,p_2$ and $q_1,q_2$ satisfy the dynamical conditions described in Lemmas \ref{nestedL} and \ref{nestedR} respectively.
Define $$j_1=\sup(J_1),\qquad j_2=\inf(J_{14}).$$
By construction, we have 
$$p_1(j_1)=\lambda_1^{-1}\lambda_2(j_1)= f^{-1}g(j_1)=g (f^{-1}(j_1))< 1,$$
and $$p_1\restriction [0,1]=\lambda_1^{-1}\lambda_2\restriction [0,1]= f^{-1}\restriction [0,1]= \lambda_2\lambda_1^{-1}\restriction [0,1]=p_2\restriction [0,1].$$
It follows that:
$$\langle p_1,p_2\rangle \cong \langle p_1,p_2\mid [p_1p_2^{-1},p_1^{-1}p_2p_1], [p_1p_2^{-1},p_1^{-2}p_2p_1^2]\rangle\cong F.$$

Next, observe that by construction we have 
$$q_1(j_2)=\lambda_2\lambda_1^{-1}(j_2)= gf^{-1}(j_2)=f^{-1}(g (j_2))> 2,$$
and $$q_2\restriction [2,3]=\lambda_1^{-1}\lambda_2\restriction [2,3]= g\restriction [2,3]= \lambda_2\lambda_1^{-1}\restriction [2,3]=q_1\restriction [2,3].$$
It follows that:
$$ \langle q_1,q_2\rangle\cong \langle q_1,q_2\mid [q_1q_2^{-1},q_1^{-1}q_2q_1], [q_1q_2^{-1},q_1^{-2}q_2q_1^2]\rangle\cong F.$$
In particular, the subgroup of $ \langle p_1,p_2\rangle\times \langle q_1,q_2\rangle$
generated by the elements $(p_1,q_2)$ and $(p_2,q_1)$ is isomorphic to $P$.

Now we claim that the map $$h_2\mapsto \lambda_2\lambda_1^{-1},\qquad h_1\mapsto \lambda_1^{-1}\lambda_2,$$
extends to an embedding
$$\langle h_1,h_2 \rangle\to \langle \lambda_1,\lambda_2\rangle.$$
This is true for the component $J_{10}$, where this is a dynamical realization of $H$.
So it suffices to show that each relation in $h_1$ and $h_2$ is satisfied by the restrictions of $\lambda_1^{-1}\lambda_2$ and $\lambda_2\lambda_1^{-1}$
on other components.
As we saw before, for $J_6$, this via the $\mathbb{Z}$--quotient given by killing the normal closure of the generator $h_1\in H$,
which factors through the abelianization map.
For the components $$[0,1]\bigcup J_{\{1,\ldots,5\}},\qquad J_{\{12,\ldots,16\}}\bigcup [2,3],$$
the action of $H$ is precisely as $P$, and since $H$ is a marked extension of $P$, whence the desired conclusion.  
\end{proof}

\subsection{Smoothability}\label{sec:smooth}

To construct square roots of $F$ which are not conjugate into $\Diff^2(I)$ or $\Diff^2(\R)$, the group of $C^2$ orientation-preserving diffeomorphisms of the interval and the real line respectively, we use the following result due to Plante--Thurston and its generalizations due to Farb--Franks:

\begin{thm}[See~\cite{PT1976,FF2003}]\label{thm:PT}
Let $N<\Diff^2(M)$ be a finitely generated nilpotent subgroup, where here $M$ is a compact and connected one--manifold. Then $N$ is abelian. Moreover, any nilpotent subgroup of $\Diff^2(\R)$ is metabelian.
\end{thm}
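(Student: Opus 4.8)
The plan is to prove the classical Plante--Thurston theorem, together with the Farb--Franks refinement for $\R$, by induction on the nilpotency class, using three dynamical inputs available in $C^2$ regularity: Kopell's lemma (if $f,g\in\Diff^2([0,1))$ commute, $f$ has no fixed point in $(0,1)$ and $g$ does, then $g=\id$ on $[0,1)$); H\"older's theorem (a group acting freely by orientation--preserving homeomorphisms of a line is conjugate into $(\R,+)$, hence abelian); and the fact that a nilpotent --- hence amenable --- group preserves a Radon measure whenever it acts on $\R$, and a probability measure whenever it acts on $S^1$ (with Denjoy's theorem available for $C^2$ circle diffeomorphisms). Finite generation enters mainly to keep auxiliary combinatorial data finite, since subgroups of finitely generated nilpotent groups are again finitely generated.

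First I would reduce to $M=[0,1]$. For $M=S^1$: using the invariant probability measure and Denjoy's theorem, a nilpotent group of $C^2$ circle diffeomorphisms either acts minimally, in which case the measure has full support and no atoms and integrating it conjugates $N$ to a group of rotations (hence abelian), or it has a finite orbit, in which case the pointwise stabilizer of that orbit has finite index and a global fixed point, and one finishes by the interval case together with the standard fact that a finitely generated virtually abelian nilpotent group is abelian. For $M=[0,1]$: restricting $N$ to the closure of each connected component of $(0,1)\setminus\mathrm{Fix}(N)$ exhibits $N$ as a subgroup of the direct product of these restrictions, and since a subgroup of a product of abelian groups is abelian it suffices to treat each restriction; each is a finitely generated nilpotent group of $C^2$ diffeomorphisms of a compact interval whose endpoints are globally fixed, with no global fixed point in the interior. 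So one may assume $N\le\Diff^2_+([0,1])$ is finitely generated nilpotent with no global fixed point in $(0,1)$, and one must show $N$ is abelian.

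The heart of the matter is a dichotomy for a nontrivial central element $z\in Z(N)$ (nontrivial since $N$ is a nontrivial nilpotent group). If $z$ has no fixed point in $(0,1)$: every $1\ne n\in N$ commutes with $z$, fixes the endpoints of $[0,1]$, and is $C^2$, so Kopell's lemma applied on $[0,1)$ forces $n$ to have no fixed point in $(0,1)$ either; thus $N$ acts freely on $(0,1)\cong\R$ and H\"older's theorem makes $N$ abelian. If $z$ does have an interior fixed point, then $\mathrm{Fix}(z)$ is a closed, $N$--invariant (since $z$ is central) subset strictly between $\{0,1\}$ and $[0,1]$, its connected component at each endpoint is a single point (otherwise $N$ would fix an interior point), and $N$ permutes the components of the complement. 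Restricting to the closure of such a component $J$, the stabilizer $\mathrm{Stab}_N(J)$ is finitely generated nilpotent, contains $z$, and $z$ is fixed--point--free there, so by the first case it acts abelianly on $J$; feeding this back into an induction set up so that the hypothesis asserts that every finitely generated nilpotent group of $C^2$ diffeomorphisms of a compact interval of strictly smaller class is abelian, and using that $N$ embeds into the product of its restrictions to the closures of the components of $[0,1]\setminus\mathrm{Fix}(z)$, one concludes that $N$ is abelian. Carrying out this last step --- controlling how $\mathrm{Fix}(z)$ is organized, relating the stabilizers of the possibly infinitely many complementary intervals to $N$, and checking that Kopell's lemma really does apply at the endpoints in question --- is the main obstacle, and it is exactly here that $C^2$ regularity is indispensable, since the statement fails in class $C^1$.

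Finally, for $N\le\Diff^2_+(\R)$ and the metabelian conclusion: being amenable, $N$ preserves a Radon measure $\mu$. If $\mu$ has an atom, the atoms of maximal mass form a nonempty closed discrete $N$--invariant subset $C$, and since $N$ preserves the linear order on $C$ the action factors through $\operatorname{Aut}_+(C,<)$, which is trivial or infinite cyclic, hence abelian, so $[N,N]$ fixes $C$ pointwise. If $\mu$ is atomless with proper support, then $\supp(\mu)$ is a nonempty proper closed $N$--invariant set; if it is bounded or a half--line, $N$ fixes one of its extreme points. If $\supp(\mu)=\R$, then conjugating $\mu$ to Lebesgue measure identifies $N$ with a group of orientation--preserving, Lebesgue--preserving homeomorphisms of a line, which forces $N$ into a translation group and hence makes it abelian. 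In every case $[N,N]$ is contained in a subgroup of $N$ with a global fixed point in $\R$; that subgroup therefore acts on two half--lines and is abelian by the half--line version of the interval argument above, so $N$ is metabelian.
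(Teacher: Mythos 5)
The paper gives no proof of Theorem~\ref{thm:PT} --- it simply cites Plante--Thurston~\cite{PT1976} and Farb--Franks~\cite{FF2003} --- so your sketch must be judged on its own. The toolkit you assemble (Kopell's lemma, H\"older's theorem, amenability giving invariant measures, Denjoy) is the right one, and your ``Case~A'' is correct and is indeed the engine of the proof: a fixed-point-free central element, via Kopell, forces every nontrivial element of $N$ to be fixed-point free, and H\"older then gives abelianness.

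There are, however, two genuine gaps. First, the circle reduction invokes ``the standard fact that a finitely generated virtually abelian nilpotent group is abelian,'' which is false: the dihedral group of order $8$ is two-generated, nilpotent of class $2$, contains $\Z/4$ with index $2$, and is non-abelian. The correct statement requires torsion-freeness, which is automatic for subgroups of $\Homeo^+([0,1])$ but needs a separate argument on $S^1$ (for instance, using that finite subgroups of $\Homeo^+(S^1)$ are cyclic, so the torsion must be handled by hand). Second --- and this is the missing heart of the argument, which you yourself flag --- the step where $\mathrm{Fix}(z)$ meets $(0,1)$ does not close. Since $N$ only permutes the complementary components of $\mathrm{Fix}(z)$ rather than preserving each one, $N$ does \emph{not} embed into the product of its restrictions to those components; only the stabilizers $\mathrm{Stab}_N(J)$ restrict, and their nilpotency class is merely $\leq$ that of $N$, not strictly smaller, so the proposed inductive parameter does not decrease. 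Closing this requires the translation-number homomorphism of Plante--Thurston, or the analysis of germs at boundary points of $\supp z$ together with induction on derived length as in Farb--Franks; neither device is supplied here. A smaller omission of the same flavor appears in the $\R$ case, where the trichotomy on $\supp\mu$ skips the possibility that $\supp\mu$ is proper but unbounded in both directions, so that the measure map is only a semiconjugacy.
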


\begin{proof}[Proof of Corollary~\ref{cor:smooth}]
Let $N_k=\bf F_2/\mathcal{R}^k$ be as in Lemma \ref{lem:solvable}. 
Then $\mathcal{R}/\mathcal{R}^k<N_k$. 
Taking a finite subset of a free generating set $S$ for $\mathcal{R}$, we have that the image of $S$ in $N_k$ generates a nilpotent subgroup 
$\Gamma_S<N_k$. 
It is straightforward to check that $\Gamma_S$ is a retract of $\mathcal{R}/\mathcal{R}^k$, 
and is therefore a nonabelian nilpotent subgroup of $N_k$ whenever $k\geq 3$. 
Applying Theorem \ref{thm:main} and Theorem \ref{thm:PT} gives the desired conclusion in the case where $M$ is compact. Choosing a $k\gg 0$ such that $N_k$ contains a nilpotent subgroup which is not metabelian, we get the desired conclusion for $\R$ as well.
\end{proof}

Corollary~\ref{cor:pl} similarly follows from Theorem~\ref{thm:main} and Theorem 4.1 of~\cite{FF2003}.

\section{Uncountability of $\mathcal{S}$ and infinitely presented examples}

In this section, we prove Theorem \ref{thm:uncountable}. For this, we retain the notation from the previous discussion.

\subsection{Sources of uncountability}

A construction of P. Hall (sometimes attributed to B. Neumann) on the existence of uncountably many distinct isomorphism classes of two--generated groups as outlined by de la Harpe in part III.C.40 of~\cite{MR1786869} has the advantage that the resulting groups are all left orderable, as observed in~\cite{KKL16}. We summarize the relevant conclusions here:

\begin{prop}\label{prop:neumann}
There exists an uncountable class $\mathcal{N}$ of pairwise non--isomorphic groups such that if $N\in\mathcal{N}$ then $N$ is two--generated, left orderable, and $N^{ab}=\Z^2$. In particular, $N$ can be realized as a subgroup of $\Homeo^+(\R)$.
\end{prop}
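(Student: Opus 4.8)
\textbf{Proof proposal for Proposition~\ref{prop:neumann}.}

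The plan is to recall P.~Hall's construction as presented in~\cite{MR1786869}, III.C.40, and to observe directly that the groups produced there can be taken to be left orderable. Fix a set $\mathcal{P}$ of primes (or more precisely an infinite set of ``building block'' indices), and for each subset $X\subseteq\mathcal{P}$ let $G_X$ be the corresponding restricted direct sum (or direct limit) of finite or finitely generated pieces indexed by $X$; Hall's argument shows that the isomorphism type of $G_X$ determines $X$, so that $\{G_X\}_{X\subseteq\mathcal{P}}$ contains a continuum of pairwise non-isomorphic groups. The standard refinement, already used in~\cite{KKL16}, is to arrange that each $G_X$ is \emph{two-generated}: one embeds the direct sum into a suitable two-generated group (an infinite iterated wreath-type construction, or a two-generated group containing the relevant direct sum as a retract-respecting subgroup whose isomorphism type still recovers $X$). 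I would simply cite this; the two-generation is the part that is genuinely due to Hall/Neumann and does not need to be reproved.

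First I would set up the indexing and state precisely which countable family of finite (or $\Z$-like) groups serves as the building blocks, insisting that each building block be left orderable --- for instance, taking the blocks to be copies of $\Z$, so that the relevant direct sums are isomorphic to $\Z^{\infty}$-like groups and the wreath-type ambient groups are iterated wreath products of $\Z$ by $\Z$. Then, invoking Lemma~\ref{lem:extension} repeatedly (restricted direct sums of left orderable groups are left orderable, and extensions of left orderable by left orderable are left orderable), I would conclude that every $G_X$ in the family is left orderable. Since a left orderable group that is two-generated has abelianization a quotient of $\Z^2$, and since the construction can be arranged so that the two chosen generators map to an independent pair in the abelianization (the blocks contribute trivially to $H_1$, or one simply passes to a subfamily on which $G_X^{\mathrm{ab}}\cong\Z^2$), we get $N^{\mathrm{ab}}=\Z^2$ for each member of the resulting uncountable subfamily $\mathcal{N}$. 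Finally, the last sentence is immediate from the second claim of Lemma~\ref{lem:extension}: a countable left orderable group embeds in $\Homeo^+(I)\cong\Homeo^+((0,1))$, hence into $\Homeo^+(\R)$.

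The main obstacle is bookkeeping rather than mathematics: one must choose the building blocks and the two-generated ambient construction compatibly, so that (a) left orderability is preserved at every stage, (b) the isomorphism type still faithfully encodes the subset $X$, and (c) the abelianization comes out to be exactly $\Z^2$ on an uncountable subfamily. Point (b) is the delicate one --- Hall's distinguishing invariant must survive the passage to a two-generated overgroup --- but this is exactly what is handled in~\cite{MR1786869} and in~\cite{KKL16}, so in the write-up I would keep the verification of left orderability explicit (a short induction via Lemma~\ref{lem:extension}) and defer the non-isomorphism and two-generation statements to those references.
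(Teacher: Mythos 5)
Your proposal correctly identifies the provenance (P.~Hall/B.~Neumann, de~la~Harpe III.C.40, \cite{KKL16}) and the right tool for left orderability (Lemma~\ref{lem:extension}), and the paper itself indeed defers the heavy lifting to these references. However, the construction you describe is not the one that works here, and the mismatch is not just bookkeeping.

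The Hall construction used in the paper (written out later in the proof of Corollary~\ref{cor:uncgeneral}) is \emph{not} a restricted direct sum of ``building blocks'' indexed by a subset $X$, nor an iterated wreath product. One first fixes a single two--generated group $\Gamma = \Z \ltimes \Gamma_0$, where $\Gamma_0$ is the $2$--step nilpotent group on generators $\{s_i\}_{i\in\Z}$ subject to $[[s_i,s_j],s_k]=1$ and $[s_i,s_j]=[s_{i+k},s_{j+k}]$, and $\Z=\langle t\rangle$ acts by the index shift. The members of $\mathcal{N}$ are the quotients $\Gamma/N_X$ by \emph{central} subgroups $N_X = \langle [s_0,s_i] : i\in X\rangle$. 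Your ``blocks'' picture runs into a dichotomy you cannot escape: if the blocks are finite simple (or alternating) groups so that the isomorphism type of the direct sum recovers $X$, the result has torsion and is not left orderable; if the blocks are copies of $\Z$, the direct sums $\Z^{(X)}$ for infinite $X$ are all isomorphic and no invariant survives. What actually encodes $X$ in the Hall groups is \emph{which} central commutators have been killed inside a fixed torsion--free nilpotent group, not which summands appear.

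Two further consequences of the correct picture that your sketch handles incorrectly: (i) the abelianization is not arranged by ``passing to a subfamily'' --- it is automatic, because every $N_X$ lies inside $[\Gamma,\Gamma]$, so $(\Gamma/N_X)^{\mathrm{ab}} \cong \Gamma^{\mathrm{ab}} \cong \Z^2$ with generators the images of $t$ and $s_0$; and (ii) left orderability is a genuine check one must make, and it follows because $\Gamma_0/N_X$ is torsion--free $2$--step nilpotent (hence left orderable) and $\Gamma/N_X$ is an extension of $\Z$ by it, so Lemma~\ref{lem:extension} applies. You reach for the right lemma but apply it to the wrong object. I would rewrite the sketch around the quotients $\Gamma/N_X$ rather than direct sums and wreath products.
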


The reader will also find groups in the class $\mathcal{N}$ described explicitly below in the proof of Corollary~\ref{cor:uncgeneral}.

\subsection{Equations over $\Homeo^+(\R)$}

In order to prove Theorem~\ref{thm:uncountable}, we will construct an explicit orderable marked extension of $P$ which contains a given element of $\mathcal{N}$ as a subgroup. To do this, we will need to solve equations over $\Homeo^+(\R)$.

Let $\{f_1,\ldots,f_k,g\}\subset\Homeo^+(\R)$ be given, and let $w\in \bf F_n$ be a reduced word in the free group on $n$ fixed generators, where here $k<n$. An \emph{equation} over $\Homeo^+(\R)$ is an expression of the form 
\[w(f_1,\ldots,f_k,x_1,\ldots,x_{n-k})=g.\] A tuple $\{y_1,\ldots,y_{n-k}\}\subset\Homeo^+(\R)$ is a \emph{solution} to the equation if this expression becomes an equality after substituting $y_i$ for $x_i$ for each $i$, and interpreting the expression in $\Homeo^+(\R)$.

We will restrict out attention to the case where $n=2$. Even here, equations may not admit solutions. A trivial example can be given by taking $f\neq g$ and setting $w$ to be the first free generator. A slightly less trivial example can be given by taking $f$ to be fixed point free, taking $g$ to have at least one fixed point, and setting $w$ to be a conjugate of the first free generator.

We will concern ourselves with a particular commutator word $w$ with free generators $s$ and $t$, so that under the map ${\bf F_2}\to P$ given by $s\mapsto (p_1,q_2)$ and $t\mapsto (p_2,q_1)$, 
the element $w$ lies in the kernel. 

%Indeed, under the natural embedding $P\to F\times F$, the commutator $[st^{-1},s^{-2}ts^2]$ is the identity in the left factor and the commutator $[st^{-1},t^{-1}s^{-1}t]$ is the identity on the right factor.

The following lemma is key in proving Theorem~\ref{thm:uncountable}:

\begin{lem}\label{lem:equation}
Fix a group $N\in \mathcal{N}$ and let $\tau$ be the map $\tau(t)=t+1$. 
There is a homeomorphism $\kappa\in \Homeo^+(I)$ and a nontrivial commutator word $w\in \ker\{{\bf F_2}\to P\}$ such that:
\begin{enumerate}
\item The group $\langle \kappa,\tau \rangle$ contains $N$ as a subgroup.
\item The equation $w(\tau,x)=\kappa$ admits a solution $y\in\Homeo^+(\R)$.
\end{enumerate}
\end{lem}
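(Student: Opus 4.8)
\textbf{Proof plan for Lemma~\ref{lem:equation}.}

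The plan is to realize $N\in\mathcal{N}$ as a subgroup of $\Homeo^+(\R)$ in a way that is compatible with the translation $\tau$, and then to arrange the second generator of this copy of $N$ to be expressible as a specific commutator evaluated on $\tau$ and an auxiliary homeomorphism $y$. First I would fix a free generating set $s,t$ for $\bf F_2$ and choose the commutator word $w$ once and for all: since the abelianization of $P$ is $\Z^2$ and the map ${\bf F_2}\to P$ sends $s,t$ to $(p_1,q_2),(p_2,q_1)$, every element of the kernel is a product of commutators, and I need a nontrivial one. A convenient choice is a short iterated commutator, e.g. $w=[s,[s,t]]$ or some conjugate-commutator of the form $[t, s^{-1}ts]$ that already appears in the defining relations of $F$; the precise choice will be dictated by step two, so I would keep it flexible and only pin it down after seeing what the equation requires.

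The main technical point is solving the equation $w(\tau,x)=\kappa$. Here is where I would exploit the structure of $\tau$: conjugation by $\tau$ is the shift $x\mapsto \tau x\tau^{-1}$, which translates the support of a homeomorphism one unit to the right. So if I take $x=y$ to be supported on a single fundamental domain, say $[0,1]$, then the various $\tau$-conjugates of $y$ occurring in $w(\tau,y)$ have pairwise disjoint supports (shifted copies of $[0,1]$), and the commutator word collapses: a product of commutators of homeomorphisms with disjoint supports can be computed componentwise, and on each translate $[m,m+1]$ the result is a fixed word (independent of $m$ up to conjugation by $\tau^m$) in $y$ and its inverse. By choosing $w$ so that on the relevant fundamental domain this word is just $y^{\pm 1}$ (or more generally a homeomorphism that can be prescribed freely), I can make $w(\tau,y)$ equal to $\bigcup_m \tau^m y \tau^{-m}$ — a $\tau$-periodic homeomorphism. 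Thus the equation $w(\tau,x)=\kappa$ is solvable precisely when $\kappa$ is chosen to be such a $\tau$-periodic homeomorphism supported in $\bigcup_m[m,m+1]=\R$, and then $y=\kappa\restriction[0,1]$ (viewed in $\Homeo^+(I)$) is the solution. The hard part is checking that $w$ can simultaneously be taken nontrivial in $\ker\{{\bf F_2}\to P\}$ \emph{and} have the disjoint-support collapse property yielding exactly a single copy of $y$ per fundamental domain; this is a bookkeeping computation in the free group, tracking which $\tau$-translate each letter lands on, and is the step I expect to be most delicate.

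For part (1), once $\kappa$ is chosen to be $\tau$-periodic with a prescribed restriction to $[0,1]$, the group $\langle\kappa,\tau\rangle$ is a semidirect-product-like object: $\tau$ acts on $\langle \tau^m\kappa_0\tau^{-m}: m\in\Z\rangle$ (where $\kappa_0=\kappa\restriction[0,1]$) by shifting indices. I would arrange $\kappa_0$ together with one further element obtained as a commutator $[\kappa,\tau^j\kappa\tau^{-j}]$ or $[\kappa,\tau]$-type expression to generate a faithful copy of the two-generated group $N$ inside $\langle\kappa,\tau\rangle$; since $N$ is left orderable by Proposition~\ref{prop:neumann}, Lemma~\ref{lem:extension} lets me realize $N$ faithfully in $\Homeo^+(I)$, and by rescaling I place this action inside $[0,1]$ so that $\kappa_0$ is its first generator and a suitable $\tau$-conjugate expression supplies the second. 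The only thing to verify is that the subgroup of $\langle\kappa,\tau\rangle$ generated by these two elements is isomorphic to $N$ and not some quotient — which again follows because distinct fundamental domains contribute independent copies, so no unwanted relation can be imposed. I would then confirm that $w$ can be chosen as a nontrivial commutator in the kernel: since the kernel $\mathcal{R}=\ker\{{\bf F_2}\to P\}$ is a nontrivial (indeed infinitely generated free) normal subgroup of $\bf F_2$ contained in the commutator subgroup, and the disjoint-support condition only constrains the \emph{shape} of $w$ as a word in $s,t$ up to replacing $t$ by $x$, there is enough room to meet both requirements; pinning down an explicit such $w$ is the concrete task that completes the proof.
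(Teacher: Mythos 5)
The central technical claim in your plan for part~(2) does not hold. You propose to take $y$ supported on a single fundamental domain $[0,1]$ so that the $\tau$-conjugates of $y$ appearing in $w(\tau,y)$ have pairwise disjoint supports, and then to choose $w$ so that the result ``collapses'' to a $\tau$-periodic homeomorphism $\bigcup_m\tau^m y\tau^{-m}$. This is impossible: since $w$ is a fixed finite word with vanishing $s$-exponent sum, $w(\tau,y)$ is a finite product of conjugates $\tau^{k}y^{\pm1}\tau^{-k}$, hence compactly supported, and no finite word can produce an infinite periodic product. There is also an exponent obstruction to the refined version of your claim (that on $[0,1]$ the result is exactly $y^{\pm1}$ and trivial on the other translates): writing $w(\tau,y)=\prod_k\tau^k y^{e_k}\tau^{-k}$, the fact that $w\in[{\bf F_2},{\bf F_2}]$ forces $\sum_k e_k=0$, so you cannot have $e_0=\pm1$ with all other $e_k$ vanishing. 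Disjoint supports do give commutation between the pieces, but the outcome is inevitably a product of nontrivial powers of $y$ on several translates, not an isolated copy of $y$, and hence not a homeomorphism you can prescribe freely on a single translate without controlling the debris on the others.

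The paper sidesteps this by exploiting the fact that \emph{every} element of $\Homeo^+(I)$ is a single commutator (Calegari, Theorem~2.65), and by building $y$ as a product of \emph{four} disjointly supported pieces rather than one. Concretely: choose generators $\phi,\psi$ of $N$, write $\psi=[\mu,\nu^{-1}]$ and $\phi=[\xi,\chi^{-1}]$, let $\kappa$ act by $\psi$ on $[2,3]$ and by $\phi$ on $[102,103]$, and let $y$ act by $\mu,\nu,\chi,\xi$ on $[1,2],[2,3],[101,102],[102,103]$ respectively. The explicit commutator $w=[w_1,w_2]$ with $w_1=[st^{-1},s^{-2}ts^2]$ and $w_2=t[st^{-1},t^{-1}s^{-1}t]t^{-1}$ is then verified by direct support-tracking to satisfy $w(\tau,y)=\kappa$: the disjoint-support calculus you describe is used, but the nontrivial output on $[2,3]$ and $[102,103]$ is precisely the pair of commutators $[\mu,\nu^{-1}]$ and $[\chi,\xi^{-1}]$, while the extra support on adjacent translates is arranged (via the far-apart placement around $2$ and $102$) so that the two ``halves'' never interact. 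For part~(1), the paper takes $\langle\kappa,\tau^{-100}\kappa\tau^{100}\rangle$, which acts as $N$ on $[102,103]$ and as $\Z$ elsewhere; since $N$'s relators lie in $[{\bf F_2},{\bf F_2}]$, the abelian components impose no further relations and $N$ embeds. Your sketch does contain the right germ (disjoint supports, shifting by $\tau$), but the ``every homeomorphism is a commutator'' input, the four-piece structure of $y$, and the separated placement of the components of $\kappa$ are the missing ingredients that make the equation actually solvable.
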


We first show how Lemma~\ref{lem:equation} implies Theorem~\ref{thm:uncountable}:

\begin{proof}[Proof of Theorem~\ref{thm:uncountable}]
We recall some of the notation and the construction in Subsection~\ref{subsec:building}.
We will use informal language below, since we already have a precise description in that subsection.

Given any $h_1,h_2\in \Homeo^+(I)$, we can build a square root $G\in\mathcal{S}$ generated by $\lambda_1,\lambda_2$ such that the group $H=\langle \lambda_1^{-1}\lambda_2,\lambda_2\lambda_1^{-1}\rangle$ satisfies the following.
\begin{enumerate}
\item $H$ acts as a dynamical realization of $P$ on $$([0,1]\cup J_{\{1,...,5\}})\bigcup (J_{\{12,...,16\}}\cup [2,3])$$
\item The group $\langle \lambda_2\lambda_1^{-1}\rangle$ acts faithfully by $\mathbb{Z}$ on the interval $J_6$ and the element $ \lambda_1^{-1}\lambda_2$ acts trivially on the interval $J_6$.
\item The element $\lambda_1^{-1}\lambda_2$ acts as $h_1$ on $J_{10}$ and the element $\lambda_2\lambda_1^{-1}$ acts as $h_2$ on $J_{10}$.
\item The action of $H$ outside the above intervals is trivial.
\end{enumerate} 

Let $\tau,\kappa$ and $y$ be the homeomorphisms of the real line from Lemma \ref{lem:equation}.
For the rest of the proof, we fix dynamical realizations of $\tau,\kappa$ on $J_{10}$
obtained from conjugating by a homeomorphism of $\mathbb{R}$ to the interior of $J_{10}$.
We shall now denote by $\tau,\kappa,y$ as these homeomorphisms supported on $J_{10}$.

We use the input $h_1=\tau$ and let $h_2=y$ to produce a square root $G$ of $F$.
Consider the subgroup $K$ of $H$ generated by 
$$k_1=\lambda_1^{-1}\lambda_2,\qquad k_2=w(\lambda_1^{-1}\lambda_2,\lambda_2\lambda_1^{-1}).$$
We check the following:
\begin{enumerate}
\item $k_1\restriction J_{10}=\tau$ and $k_2\restriction J_{10}=\kappa$.
\item $k_2$ acts trivially outside $J_{10}$
since $w(s,t)$ represents the identity in $P$ under the map $s\mapsto (p_1,q_2)$ and $t\mapsto (p_2,q_1)$.
\item Any commutator vanishes on $J_6$.
\item $k_1$ acts trivially on $J_6$ and by $\mathbb{Z}$ on $$([0,1]\cup J_{\{1,...,5\}})\bigcup (J_{\{12,...,16\}}\cup [2,3])$$
\end{enumerate}

By our assumption, $N<\langle k_1,k_2\rangle\restriction J_{10}$.
We claim that in fact, $N<\langle k_1,k_2\rangle$. 
This follows from the fact that the relations in $N$ are elements of the commutator subgroup of the free group,
and since $\langle k_1, k_2\rangle$ acts by $\mathbb{Z}$ outside $J_{10}$.
Therefore $N<G$ where $G$ is the corresponding square root of $F$.

We thus obtain that if $N\in\mathcal{N}$ is given, then there is a square root $G_N\in\mathcal{S}$ which contains $N$ as a subgroup. 
Since the class $\mathcal{N}$ contains uncountably many different isomorphism types and since any element of $\mathcal{S}$ is two--generated and hence countable, 
the class $\{G_N\mid N\in\mathcal{N}\}\subset\mathcal{S}$ consists of uncountably many different isomorphism types.
\end{proof}

\begin{proof}[Proof of Lemma~\ref{lem:equation}]
We shall use the commutator word $$w(s,t)=[w_1(s,t), w_2(s,t)],$$
where $$w_1=[st^{-1},s^{-2} t s^2],\qquad w_2=t [st^{-1}, t^{-1} s^{-1} t] t^{-1}.$$
It is straightforward to check that for the map ${\bf F_2}\to P$ given by $s\mapsto (p_1,q_2)$ and $t\mapsto (p_2,q_1)$, 
the element $w$ lies in the kernel. 

Let $\phi,\psi\in\Homeo^+(I)$ be given generators of $N$. 
We first choose homeomorphisms $\mu,\nu\in\Homeo^+(I)$ such that $\psi=[\mu,\nu^{-1}]$ and homeomorphisms 
$\chi,\xi\in\Homeo^+(I)$ such that $\phi=[\xi,\chi^{-1}]$. Such choices are possible, since every element of $\Homeo^+(I)$ is a commutator (see Theorem 2.65 of~\cite{Calegari2007}, for instance).

Identify $I$ with the unit interval $[0,1]\subset\R$. 
Recall that $\tau$ is translation by $1$ on $\R$.
We set $\kappa\in\Homeo^+(\R)$ as 
$$\kappa=(\tau^{-2}\psi\tau^2) (\tau^{-102}\phi\tau^{102})$$
Intuitively, $\kappa$ acts by $\psi$ on the interval $[2,3]$, by $\phi$ on the interval $[102,103]$, and by the identity otherwise.
We now verify that $\kappa$ witnesses the conditions of the lemma. 

We set \[y=(\tau^{-1}\mu\tau^{1})(\tau^{-2}\nu\tau^{2}) (\tau^{-101}\chi\tau^{101}) (\tau^{-102}\xi\tau^{102}).\] 
Intuitively, the homeomorphism $y$ acts by $\mu$ on $[1,2]$, by $\nu$ on $[2,3]$, by $\chi$ on $[101,102]$, and by $\xi$ on $[102,103]$. 
We check that $y$ is the solution to the equation.
We proceed by analysing the two inner commutators separately, and then considering the outer commutator. 

Consider the commutator $[\tau y^{-1},\tau^{-2}y\tau^2]$.
First note that since $y^{-1}$ has disjoint support from $\tau^{-2} y \tau^2$, they commute, and hence 
$$[\tau y^{-1},\tau^{-2}y\tau^2]=[\tau,\tau^{-2}y\tau^2]$$
We can now easily check that the action of the resulting homeomorphism is as follows: 
\begin{enumerate}
\item It acts by $\mu$ on $[2,3]$, by $\nu\mu^{-1}$ on $[3,4]$, and by $\nu^{-1}$ on $[4,5]$.
\item It acts by $\chi$ on $[102,103]$, by $\xi\chi^{-1}$ on $[103,104]$, and by $\xi^{-1}$ on $[104,105]$. 
\end{enumerate}
We denote this homeomorphism by $\alpha$.

Next, consider the commutator $$[\tau y^{-1}, y^{-1} \tau^{-1} y]$$
First, note that $$[\tau y^{-1},y^{-1}\tau^{-1}y]=[\tau,y^{-2}][\tau^{-1},y^{-1}]$$

It is straightforward to check that the homeomorphism resulting from the product of these commutators is as follows:
\begin{enumerate}
\item It acts by $\mu^{-2}$ on $[0,1]$, by $\nu^{-2}\mu^3$ on $[1,2]$, by $\nu^2\mu^{-1}\nu$ on $[2,3]$, and by $\nu^{-1}$ on $[3,4]$. 
\item  It acts by $\chi^{-2}$ on $[100,101]$, by $\xi^{-2}\chi^3$ on $[101,102]$, by $\xi^2\chi^{-1}\xi$ on $[102,103]$, and by $\xi^{-1}$ on $[103,104]$.
\end{enumerate}
We denote by $\beta$ the homeomorphism $t [\tau y^{-1}, y^{-1} \tau^{-1} y] t^{-1}$.

Finally, we consider the homeomorphism $[\alpha,\beta]$. 
Observe that the supports of $\alpha$ and $\beta$ intersect in the intervals $[2,3]$ and $[102,103]$. 
Since $\psi=[\mu,\nu^{-1}]$, 
we see that $[\alpha,\beta]$ acts by $\psi$ on $[2,3]$. 
Similarly, since $\phi=[\chi,\xi^{-1}]$, we have that $[\alpha,\beta]$ acts by $\phi$ on $[102,103]$. 
It follows that $[\alpha,\beta]$ agrees with $\kappa$, whence $y$ is a solution to the equation as claimed.

Finally, we show that $N<\langle \kappa, \tau\rangle$.
Indeed the group generated by $\tau^{-100}\kappa \tau^{100}, \kappa$ acts as $N$ on $[102,103]$ and as $\mathbb{Z}$ outside this interval.
Since the relations in $N$ are elements of the commutator subgroup of the free group, it follows that this group is isomorphic to $N$.
\end{proof}

\section{General square root phenomena}\label{sec:generalroot}
In this section, we pass to the completely general setup of finitely generated subgroups of $\Homeo^+(I)$ and address the results in Subsection~\ref{subsec:other}.

\subsection{Roots of homeomorphisms}

We begin with a completely general construction in $\Homeo^+(I)$ for producing roots of homeomorphisms.
The following is a well--known fact, whose proof we recall for the convenience of the reader:

\begin{lem}\label{lem:root}
Let $f\in\Homeo^+(I)$. Then for all $n\in\bN$, there exists an element $g=g_n\in\Homeo^+(I)$ such that $g^n=f$.
Moreover, there are uncountably many possible choices of such a map $g$.
\end{lem}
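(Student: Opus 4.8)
The statement asserts that every $f\in\Homeo^+(I)$ has an $n$-th root, and indeed uncountably many. The plan is to reduce to the case where $f$ has no interior fixed points, build an explicit root there, and then assemble roots componentwise over the (possibly complicated) fixed-point set of $f$, being careful that the pieces glue to a genuine homeomorphism of $I$.

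\textbf{Step 1: reduce to a single component of the support.} Let $U=\supp f=\bigsqcup_i (a_i,b_i)$ be the (open) set where $f$ moves points, a disjoint union of open intervals. On the complement $I\setminus U$ we will take $g$ to be the identity, so $g^n=f=\id$ there automatically. It therefore suffices to construct, on each component $(a_i,b_i)$, a homeomorphism $g_i$ of $[a_i,b_i]$ fixing the endpoints with $g_i^n=f|_{[a_i,b_i]}$, and then to check that the map $g$ defined piecewise by $g|_{[a_i,b_i]}=g_i$ and $g|_{I\setminus U}=\id$ is continuous. Continuity at a point $x\in\partial U$ is the one subtle point: since $f(x)=x$ and $f$ is continuous, points near $x$ are moved only slightly by $f$; we will arrange $g_i$ so that $g_i$ also moves points near $a_i,b_i$ only slightly (e.g. by requiring $|g_i(y)-y|\le |f(y)-y|$ for $y$ near the endpoints, which is the natural outcome of the construction below). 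This makes $g$ continuous, and it is a bijection with continuous inverse by the same reasoning, hence $g\in\Homeo^+(I)$.

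\textbf{Step 2: the root on one component.} Fix a component $(a,b)$ on which, after possibly replacing $f$ by $f^{-1}$ (which only flips the sign of the root), we may assume $f(x)>x$ for all $x\in(a,b)$. Pick any point $x_0\in(a,b)$ and consider its forward and backward $f$-orbit $\{x_k=f^k(x_0)\}_{k\in\Z}$, which is a bi-infinite increasing sequence with $x_k\to a$ as $k\to-\infty$ and $x_k\to b$ as $k\to+\infty$. On the single ``fundamental domain'' $[x_0,x_1]$ choose any orientation-preserving homeomorphism $\sigma\colon [x_0,x_1]\to[x_0,x_1]$; then define $g$ on $[x_{-1},x_0]=f^{-1}[x_0,x_1]$ by $g=\id$ on... more precisely, the cleanest recipe is: subdivide by declaring $g$ to cyclically permute the $n$ subintervals obtained from inserting $n-1$ interior division points. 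Concretely, choose points $x_0=y_0<y_1<\cdots<y_n=x_1$, set $g(y_j)=y_{j+1}$ for $0\le j\le n-1$ and $g(y_{n-1})\mapsto x_1$, with $g$ interpolated to an orientation-preserving homeomorphism $[y_{n-1},y_n]\to[y_n,f(y_1)]$... the accounting is fiddly, so I would instead present the standard ``$n$-th root via an $n$-fold covering of a fundamental domain'' argument: identify $(a,b)$ $f$-equivariantly with $\R$ carrying the translation $t\mapsto t+1$ (possible since $f$ is a fixed-point-free increasing homeomorphism of an interval, so it is conjugate to $\tau(t)=t+n$ say), and then $g(t)=t+1/n$ (pulled back) is the required root. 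Uncountably many choices arise because the conjugating homeomorphism $(a,b)\cong\R$ can be precomposed on a fundamental domain with an arbitrary homeomorphism, and distinct such choices give distinct $g$ (they have different germs or different values at chosen orbit points); already varying $\sigma$ over the uncountable set $\Homeo^+([x_0,x_1])$ yields uncountably many distinct roots, since the restriction of $g$ to one fundamental domain recovers $\sigma$.

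\textbf{Step 3: uncountability, globally.} Having produced uncountably many roots on one component $(a_1,b_1)$ — and at least one root on every other component — combining them gives uncountably many roots $g$ of $f$ on all of $I$; two global roots that differ on $(a_1,b_1)$ differ as maps of $I$. The continuity check from Step 1 must be redone uniformly, but it only uses the endpoint-slowness property, which we build into every choice. \textbf{The main obstacle} is precisely this gluing issue: ensuring that the freedom exercised on each component (especially when $U$ has infinitely many components accumulating somewhere) does not destroy continuity of the assembled map at the boundary of $\supp f$; this is handled by the uniform constraint $|g(y)-y|\le|f(y)-y|$ near $\partial U$, which follows automatically if on each component one chooses the division points $y_j$ within the orbit segment $[x_0,f(x_0)]$ and lets the fundamental-domain data be arbitrary only on a compact piece bounded away from the endpoints of that component.
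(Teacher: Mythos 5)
Your proof follows essentially the same route as the paper's: reduce to a single component of $\supp f$, conjugate the fixed-point-free homeomorphism on that component to a translation of $\R$, exploit the freedom in building an $n$-th root of translation, and reassemble. A few remarks, none fatal.

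The gluing concern that occupies Steps~1 and~3 is actually automatic and needs no auxiliary constraint. Any root $g_i$ must preserve the component $(a_i,b_i)$ and fix its endpoints, so $|g(y)-y|<b_i-a_i$ for $y\in(a_i,b_i)$. Given $y_k\to x\in\partial(\supp f)$, either infinitely many $y_k$ lie in a single component having $x$ as an endpoint (then $g(y_k)\to x$ by continuity of that $g_i$ on the closed interval), or the components containing the $y_k$ are eventually pairwise distinct, hence have lengths tending to $0$ (disjoint intervals in $[0,1]$), forcing $g(y_k)\to x$ in any case. The same argument applies to $g^{-1}$, so $g\in\Homeo^+(I)$ without imposing $|g(y)-y|\le|f(y)-y|$. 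The paper correctly treats this reduction as routine.

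Step~2 sketches three variants of the construction and only the last is carried through, with a small slip: you conjugate $f$ to $\tau(t)=t+n$ and then take $g(t)=t+1/n$, but $(t\mapsto t+1/n)^n$ equals $t\mapsto t+1$, not $t\mapsto t+n$; you want to conjugate to $t+1$ and use $t+1/n$, or to $t+n$ and use $t+1$. The paper implements the cleaner version of your ``cyclic permutation'' variant: after conjugating $f$ to $t\mapsto t+1$, freely choose orientation-preserving homeomorphisms $h_m:[m/n,(m+1)/n]\to[(m+1)/n,(m+2)/n]$ for $0\le m\le n-2$, and then the requirement that every composition of $n$ consecutive $h_k$'s equals translation by $1$ uniquely determines the rest; these glue into the root. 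The uncountable freedom is then manifest in the choice of $(h_0,\ldots,h_{n-2})$, with distinct choices visibly giving distinct $g$. Your alternative of conjugating the ``linear'' root $t\mapsto t+1/n$ by centralizer elements of $\tau$ also works, but you must observe that two conjugators give the same $g$ iff they differ by an element of the (strictly smaller) centralizer of $t\mapsto t+1/n$, and that the quotient is uncountable; your phrasing ``distinct such choices give distinct $g$'' glosses over this.
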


\begin{proof}
By considering the components of the support of $f$ separately, we may consider the case where $f$ has no fixed points in the interval $(0,1)$. In this case, $f$ is topologically conjugate to the homeomorphism of $\R\cup\{\pm\infty\}$ given by $x\mapsto x+1$.

We now build an $n^{th}$ root of $f$ defined on all of $\R$ in the following manner. 
First, we choose arbitrary orientation-preserving homeomorphisms 
\[h_m:[\frac{m}{n},\frac{m+1}{n}]\to [\frac{m+1}{n},\frac{m+2}{n}]\]  for $0\leq m\leq n-2$.
Next, we inductively define homeomorphisms \[h_m:[\frac{m}{n},\frac{m+1}{n}]\to [\frac{m+1}{n},\frac{m+2}{n}]\] for all $m\in \Z$, 
such that 
\[h_{k+(n-1)}\circ \cdots \circ h_{k}=x+1\]
for each $k\in \mathbb{Z}$. 
It is clear then that the homeomorphisms $h_m$ piece together to give a homeomorphism $g$ of $\R\cup\{\pm\infty\}$, whose $n^{th}$ power is translation by one. Moreover, the arbitrariness of the choices made guarantees that there are uncountably many choices for $g$.
\end{proof}

\subsection{Free groups}

Classical result from combinatorial and geometric group theory show that there exist two--generated groups $\langle a,b\rangle$ which are not free, but such that $\langle a^2,b^2\rangle$ is free. Moreover, one can arrange for these groups to be left orderable, and hence to be realized as subgroups of $\Homeo^+(I)$. For instance, we take the braid group on three strands \[B_3=\langle a,b\mid aba=bab\rangle.\] All braid groups are left orderable (in fact the reader may consult~\cite{DDRW08} as a book dedicated entirely to this subject),
and it is a standard fact that the squares of the standard braids generate a free group (see Chapter 3, Section 5 of~\cite{FM2012}).

\subsection{The lamplighter group}

Using square roots of $F$, we can produce many square roots of the lamplighter group $L=\Z\wr\Z$. Recall that \[\Z\wr\Z\cong\Z\ltimes \big(\bigoplus_{i\in\Z}\Z_i\big),\] where the natural action of $\Z$ is by translating the index $\Z_i\mapsto\Z_{i+1}$. The group $L$ is naturally realized as a subgroup of $\Homeo^+(\R)<\Homeo^+(I)$ as follows. We choose an arbitrary homeomorphism $\psi$ such that $\supp\psi=(0,1)\subset\R$, and then we consider the group generated by $\psi$ and $\tau$, where as before $\tau(x)=x+1$. It is clear that $\langle\psi,\tau\rangle\cong L$. The following result clearly implies Theorem~\ref{thm:lamplighter}, in light of Theorem~\ref{thm:uncountable}

\begin{thm}\label{thm:lamproot}
Let $G$ be a left orderable marked extension of $P$. Then there exists a square root of $L$ containing an isomorphic copy of $G$.
\end{thm}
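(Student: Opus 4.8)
The plan is to mirror the construction used in the proof of Theorem~\ref{thm:uncountable} (and the recipe of Subsection~\ref{subsec:building}), but with the cyclic group $\Z$ replacing $N\in\mathcal{N}$ on the interval $J_{10}$ and with $L$ in place of $F$ as the ``target'' group. Recall that $L=\langle\psi,\tau\rangle<\Homeo^+(\R)$, where $\tau(x)=x+1$ and $\supp\psi=(0,1)$, and that $L\cong\Z\wr\Z$. First I would fix a countable left orderable marked extension $G=\langle g_1,g_2\rangle$ of $P$, together with its surjection $G\to P$, and fix a dynamical realization of $G$ on an interval $K$; I will build a pair of homeomorphisms $(\lambda_1,\lambda_2)$ of the line so that $\langle\lambda_1^2,\lambda_2^2\rangle\cong L$ while $G<\langle\lambda_1,\lambda_2\rangle$.

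Concretely, I would reuse the sixteen intervals $\{J_1,\dots,J_{16}\}$ and the homeomorphisms $f,g$ of Step~2 in Subsection~\ref{subsec:building}, so that $\lambda_1=h_1^{-1}h_3^{-1}f$ and $\lambda_2=g$ have $\lambda_1^2,\lambda_2^2$ satisfying the dynamical condition of Lemma~\ref{lem:dyn criterion}. The difference is that rather than aiming for $\langle\lambda_1^2,\lambda_2^2\rangle\cong F$, I want it isomorphic to $L$; the cleanest way is to route the ``lamplighter'' dynamics through the interval $J_{10}$ (or rather a sub-configuration of the $J_i$'s), exactly as the cyclic factor $\Z$ was routed through $J_{10}$ in Theorem~\ref{thm:uncountable}. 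That is: on $J_{10}$ I place the standard realization $\langle\psi,\tau\rangle\cong L$ of the lamplighter group, with $\lambda_1^{-1}\lambda_2\restriction J_{10}$ and $\lambda_2\lambda_1^{-1}\restriction J_{10}$ (equivalently $\lambda_1^2\restriction J_{10}$ and $\lambda_2^2\restriction J_{10}$ after conjugating) playing the roles of $\psi$ and $\tau$; while on the remaining components $[0,1]\cup J_{\{1,\dots,5\}}$ and $J_{\{12,\dots,16\}}\cup[2,3]$ the restrictions act as the copies $\langle p_1,p_2\rangle\cong F$ and $\langle q_1,q_2\rangle\cong F$, contributing only the factor $P$ to the group generated by $\lambda_1^{-1}\lambda_2$ and $\lambda_2\lambda_1^{-1}$; and on $J_6$ everything dies under the $\Z$-quotient of Proposition~\ref{mainsub}. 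Here one uses the word $h_3=g^{-1}h_2g=f^{-1}h_2f$ to set up the copy of $G$ on $J_{10}$ as in Step~3, but now one also needs, in a neighboring interval, a realization that forces the relator structure of $L$ rather than $F$; this is where one invokes Lemma~\ref{lem:root} to extract the square roots explicitly, and the fact that in the standard presentation of $L$ every relator lies in the derived subgroup of the free group.

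The verification then splits into two parts, both in the spirit of Proposition~\ref{mainsub}. First, $\langle\lambda_1^2,\lambda_2^2\rangle\cong L$: this follows because the restrictions to $J_{10}$ already generate a faithful copy of $L$, and on every other component of support the squares act either trivially ($J_6$) or through the copies of $F$ assembled into $P$ — but one must check that the relators of $L$ (which are commutators, hence lie in $[{\bf F_2},{\bf F_2}]$) are satisfied on those other components, and they are, since each of the $F$-pieces and the trivial piece abelianizes the relevant words. Second, $G<\langle\lambda_1,\lambda_2\rangle$: the map $g_1\mapsto\lambda_1^{-1}\lambda_2$, $g_2\mapsto\lambda_2\lambda_1^{-1}$ restricts on $J_{10}$ to the chosen dynamical realization of $G$ (so it is injective there), and on the other components it factors through $G\surj P$ or through the abelianization, exactly as in Proposition~\ref{mainsub}; hence the assignment extends to an embedding $G\inj\langle\lambda_1,\lambda_2\rangle$. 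The main obstacle I expect is bookkeeping: reconciling the sixteen-interval combinatorial scaffold of Subsection~\ref{subsec:building}, which was tuned so that $\lambda_1^2,\lambda_2^2$ generate $F$, with the requirement that they now generate $L$ — in particular, deciding whether to nest the lamplighter data inside $J_{10}$ and absorb it into $h_2$ (so that $\lambda_1^2\restriction J_{10}$ and $\lambda_2^2\restriction J_{10}$ realize $\psi$ and $\tau$ after a conjugacy), or to enlarge the partition; I would opt for the former, since it keeps the Lemma~\ref{lem:dyn criterion} dynamics untouched outside $J_{10}$ and confines all the new work to a single interval, at the cost of one more application of Lemma~\ref{lem:root} to see that the requisite square roots on that interval exist.
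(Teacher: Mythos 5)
Your approach contains a genuine contradiction, not just a ``bookkeeping'' difficulty, and it is quite different from the paper's much shorter and simpler argument.

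The fatal problem: you want to reuse the sixteen-interval scaffold of Subsection~\ref{subsec:building} with $\lambda_1=h_1^{-1}h_3^{-1}f$, $\lambda_2=g$, and you explicitly keep the requirement that ``$\lambda_1^2,\lambda_2^2$ satisfy the dynamical condition of Lemma~\ref{lem:dyn criterion}.'' But Lemma~\ref{lem:dyn criterion} \emph{forces} $\langle\lambda_1^2,\lambda_2^2\rangle\cong F$. Since $F\not\cong L=\Z\wr\Z$ (for instance, $L$ has nontrivial center-like structure in its abelian normal subgroup and is not finitely presented; $F$ is finitely presented with trivial center), you cannot simultaneously have that dynamical condition hold and have $\langle\lambda_1^2,\lambda_2^2\rangle\cong L$. ``Nesting the lamplighter data inside $J_{10}$'' cannot evade this: once the fast-chain dynamics are in place, the isomorphism type of the square group is pinned down to $F$ regardless of what the generators do inside a single interval. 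This is not something you can reconcile by choosing square roots more cleverly via Lemma~\ref{lem:root}; the two demands are incompatible. A secondary error is the parenthetical ``$\lambda_1^{-1}\lambda_2\restriction J_{10}$ (equivalently $\lambda_1^2\restriction J_{10}$ after conjugating)'' — the elements $\lambda_1^{-1}\lambda_2,\lambda_2\lambda_1^{-1}$ are not conjugates of $\lambda_1^2,\lambda_2^2$, and in Proposition~\ref{mainsub} they play an entirely different role (they are the ``small'' elements carrying the copy of $H$, whereas the squares carry the copy of $F$).

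The paper's argument does not go through the chain-group machinery at all. It takes $T(x)=x+\tfrac12$, so that $T^2=\tau$ is translation by $1$, and a homeomorphism $\psi$ supported on $(0,1)$ with $\psi(\tfrac12)=\tfrac12$. Then $\psi^2$ is still supported on $(0,1)$, so $\langle\psi^2,T^2\rangle=\langle\psi^2,\tau\rangle\cong\Z\wr\Z=L$ automatically (the $\tau$-conjugates of $\psi^2$ have pairwise disjoint supports). The copy of $G$ is produced by choosing the two halves of $\psi$ on $(0,\tfrac12)$ and $(\tfrac12,1)$ independently so that $\psi$ and $T\psi T^{-1}$, which overlap on $(\tfrac12,1)$, realize the generators of $G$ there. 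In other words, the square-root structure and the embedded $G$ live in the same two generators $\psi, T$, not in a rescaled copy of the $F$-construction. You should replace your plan with this direct construction; the $J_1,\dots,J_{16}$ framework is tailored specifically to produce $F$ upon squaring and is not compatible with producing $L$.
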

\begin{proof}[Sketch of proof]
Let $T\in\Homeo^+(\R)$ be given by $T(x)=x+1/2$. Note that the intervals $(0,1)$ and $T((0,1))$ together form a chain of intervals.

Let $G$ be a given left orderable marked extension of $P$, with distinguished generators $g_1$ and $g_2$.
Let $\psi$ be a homeomorphism supported on $(0,1)$ satsifying the following
conditions:

\begin{enumerate}
\item
$\psi(1/2)=1/2$.
\item
The group $\langle\psi,T\psi T^{-1}\rangle<\Homeo^+(\R)$ contains a copy of $G$.
\end{enumerate}

It is easy to see that such a $\psi$ exists, since besides the two conditions above it is otherwise arbitrary,
and its action on the two halves of $(0,1)$
can be chosen independently. Thus, one may arrange 
that the action on $(0,1/2)$ recovers the generator $g_1$ and the action on $(1/2,1)$ recovers the generator
$g_2$. It is clear that squaring $\psi$ and $T$ gives rise to a group isomorphic to the lamplighter group. If one would like the copy of $G$
to lie in a square root of $F$, one can further compose $\psi$ with a suitably chosen increasing homeomorphism of $(0,1)$.
The reader should compare with Subsection~\ref{subsec:building} for the details of the latter construction.
\end{proof}

\subsection{Square roots of $\Z$}
In this section, we give a recipe for producing many $n$--generated groups of homeomorphisms of the interval, so that the squares of the generators generate a cyclic group.

\begin{proof}[Proof of Theorem~\ref{thm:Z^n}]
Let $\tau_1,\ldots,\tau_{n+1}$ be $n+1$ copies of the translation $\tau(x)=x+1$ viewed as a homeomorphism of $\R$, and let \[\langle h_1,\ldots,h_n\rangle=H<\Homeo^+(I)\] be an arbitrary $n$--generated subgroup. We set $T_{n+1}(x)=x+1/2$.

Lemma~\ref{lem:root} constructs all possible roots $\tau$, and we follow the construction given there. We first scale down $H$ to be a group of homeomorphisms of $[0,1/2]$, and we abuse notation and label the generators of $H$ by $\{h_1,\ldots,h_n\}$. We now define $T_i$ to be the homeomorphism $T_{n+1}\circ h_i$ on $[0,1/2]$. The requirement $T_i^2=\tau$ determines the values of $T_i$ on the rest of $\R$.

Now let $S_i=T_{n+1}^{-1}\circ T_i$ for $1\leq i\leq n$. Observe that $S_i$ acts by $h_i$ on each interval of the form $[k,k+1/2]$ and by $h_i^{-1}$ on each interval of the form $[k-1/2,k]$, where $k\in\Z$. It is clear then the $\yh H\cong \langle S_1,\ldots,S_n\rangle<\langle T_1,\ldots,T_{n+1}\rangle$.
\end{proof}

\begin{proof}[Proof of Corollary~\ref{cor:uncgeneral}]
First, note that if $H$ is free then the skew subdirect product $\yh H$ is also free, which establishes the second part of the corollary. For the first part, we perform a mild modification of the Hall--Neumann groups as discussed in Proposition~\ref{prop:neumann} (see~\cite{KKL16} for a detailed discussion of these groups). All the Hall--Neumann groups quotients of a single two--generated group $\Gamma=\langle t,s_0\rangle$ which is left orderable. The element of $\mathcal{N}$ are given as quotients of $\Gamma$ by certain central normal subgroups $N_X<\Gamma$.

We recall the definition of $\Gamma$ for the convenience of the reader, following III.C.40 of de la Harpe's book~\cite{MR1786869}
(cf. Lemma 5.1 of~\cite{KKL16}). We begin
with a set $S=\{s_i\}_{i\in\Z}$. Then, we define \[R=\{[[s_i,s_j],s_k]=\}_{i,j,k\in\Z}\cup \{[s_i,s_j]=[s_{i+k},s_{j+k}]\}_{i,j,k\in\Z}.\] The group
$\Gamma_0$ is defined by $\langle S\mid R\rangle$, and $\Gamma$ is defined as a semidirect product of $\Z$ with $\Gamma_0$, where
the generator $t$ of the $\Z$--factor acts by $t^{-1}s_it=s_{i+1}$. One sets $u_i=[s_0,s_i]$, and if $X\subset \Z\setminus \{0\}$, we write
$N_X=\langle \{u_i\}_{i\in X}\rangle$.

Note that the group $\Gamma$ is generated by $t$ and $s_0$.
It is straightforward to check that the map given by $t\mapsto t^{-1}$ and $s_0\mapsto s_0^{-1}$ extends to a well--defined automorphism of $\Gamma$, whence $\yh\Gamma\cong\Gamma$. Moreover, the subgroups $N_X$ are all stable under this automorphism of $\Gamma$. In particular, it follows that if $N\in\mathcal{N}$ is one of the Hall--Neumann groups, then $N\cong\yh N$. The first claim of the corollary follows from Theorem~\ref{thm:Z^n}.
\end{proof}

\subsection{General groups}\label{subsec:generalroot}
For a general finitely generated group $H=\langle x_1,\ldots,x_n\mid R\rangle$, one can \emph{formally take the square root} of $H$ by setting \[G=\langle y_1,\ldots,y_n,x_1,\ldots,x_n\mid R, x_1=y_1^2,\ldots,x_n=y_n^2\rangle.\] Note that this definition depends on the presentation of $H$ which is given. If $H$ is given as a free group with no relations then $G$ will be free of the same rank. However, if $H$ is not freely presented then $G$ can be very complicated. We will call a presentation for a group $H$ \emph{reduced} if $x_i$ is nontrivial in $H$ for each $i$.

\begin{thm}\label{thm:formalorder}
Let $H$ be a left orderable finitely generated group with a reduced presentation. Then the formal square root $G$ of $H$ is left orderable.
\end{thm}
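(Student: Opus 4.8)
The plan is to build a left ordering on $G$ by "refining" a given left ordering on $H$ along the kernel of the natural surjection $\pi\colon G\twoheadrightarrow H$ obtained by forgetting the new generators $y_i$ (equivalently, sending $y_i\mapsto$ a square root of $x_i$ — but the cleanest description is that $\pi$ sends $y_i\mapsto x_i$ and $x_i\mapsto x_i^2$ is \emph{not} quite it; rather the map that matters is $G\to H$ killing the subgroup that makes $y_i^2=x_i$ collapse). More precisely, there is an obvious surjection $\rho\colon G\to H$ defined on generators by $y_i\mapsto x_i$ and $x_i\mapsto x_i^2$, which is well defined because the relations $R$ of $H$ are sent to words in the $x_i^2$ that hold in $H$, and the relations $x_i=y_i^2$ go to $x_i^2 = x_i^2$. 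By Lemma~\ref{lem:extension}, to order $G$ it suffices to left order the kernel $K=\ker\rho$, and then lift a chosen left ordering of $H$.

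The key step will be identifying $K$. The intuition is that $K$ should be free: $G$ is built from $H$ by adjoining genuine "new" square roots, and the only relations in $G$ are those of $H$ together with $x_i = y_i^2$, which are exactly what is needed to make $\rho$ a well-defined map with the $x_i$ no longer free but the $y_i$ "as free as possible modulo their squares lying in $H$". I would make this precise using a graph-of-groups / Bass--Serre decomposition or a direct normal form argument: $G$ is an iterated amalgamated free product (or HNN extension) that realizes it as $H *_{\langle x_i\rangle} (\text{groups } \langle y_i\mid -\rangle)$ with the edge group $\langle x_i\rangle\cong\Z$ (here the reduced hypothesis is exactly what guarantees $x_i$ has infinite order provided... wait, "reduced" only says $x_i\neq 1$, so $\langle x_i\rangle$ could be finite cyclic; I will need to handle that, see below). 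Granting that $K$ is free — hence left orderable, since free groups are bi-orderable — the theorem follows immediately from Lemma~\ref{lem:extension} applied to $1\to K\to G\to H\to 1$.

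The main obstacle, and the step I expect to require the most care, is precisely the structure of $K$ when some $x_i$ has finite order in $H$: then $\langle x_i\rangle$ is a finite cyclic edge group and $K$ need not be free. However, a left orderable group has no torsion, so every nontrivial $x_i$ automatically has infinite order; thus "reduced" together with left orderability of $H$ does give $\langle x_i\rangle\cong\Z$ for each $i$, and the amalgam/HNN picture is of a group splitting over infinite cyclic subgroups. One then argues that $K$, being the kernel of the retraction-like map $\rho$ onto a vertex group's quotient, acts on the Bass--Serre tree with trivial or infinite-cyclic-but-actually-trivial edge stabilizers, so $K$ is free by the structure theorem for groups acting on trees with cyclic stabilizers intersected with $K$ — more carefully, $K\cap H = \{1\}$ and $K\cap \langle y_i, x_i\rangle$ is the free group $\langle y_i\rangle$ modulo nothing, i.e.\ $K$ is a free product of copies of $\Z$ and a free group, hence free. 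I would present the argument in the order: (1) define $\rho$ and check it is well defined and surjective; (2) invoke torsion-freeness of left orderable groups to get $\langle x_i\rangle\cong\Z$; (3) exhibit $G$ as an iterated amalgam over these $\Z$'s and describe the Bass--Serre tree; (4) show $K=\ker\rho$ acts freely (equivalently with trivial edge and vertex stabilizers) on this tree, hence $K$ is free; (5) apply Lemma~\ref{lem:extension} to conclude $G$ is left orderable. The delicate bookkeeping is step (4) — verifying that a vertex stabilizer of the tree intersected with $K$ is trivial, which amounts to checking $K\cap H=1$ (clear, since $\rho|_H$ is injective on $H\hookrightarrow G$... or rather that $H$ embeds in $G$, which itself needs the amalgam structure) and $K\cap\langle y_i,x_i\rangle \cong \langle y_i\rangle\cong\Z$.
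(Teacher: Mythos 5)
The central step in your proposal fails: the map $\rho\colon G\to H$ you define by $y_i\mapsto x_i$ and $x_i\mapsto x_i^2$ is not a well-defined homomorphism in general. For $\rho$ to respect the relators in $R$ you would need every relation $r(x_1,\ldots,x_n)=1$ of $H$ to imply $r(x_1^2,\ldots,x_n^2)=1$ in $H$, and there is no reason for that. A concrete counterexample that is even left orderable and has a reduced two--generator presentation is the braid group $B_3=\langle x_1,x_2\mid x_1x_2x_1=x_2x_1x_2\rangle$: the squares $x_1^2,x_2^2$ generate a nonabelian free subgroup of $B_3$ (a fact the paper itself uses), so $x_1^2x_2^2x_1^2\neq x_2^2x_1^2x_2^2$ and $\rho$ is not defined. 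More fundamentally there is no natural surjection $G\twoheadrightarrow H$ at all: after the Tietze move eliminating the $x_i$, one has $G=\langle y_1,\ldots,y_n\mid R(y_1^2,\ldots,y_n^2)\rangle$, and killing the $y_i$'s or the $x_i=y_i^2$'s gives $1$ or $\ast_n\,\Z/2\Z$, not $H$. Since your entire strategy hinges on applying Lemma~\ref{lem:extension} to the (nonexistent) short exact sequence $1\to K\to G\to H\to 1$ and on showing its kernel $K$ is free, the argument does not get off the ground.

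By contrast, the paper never tries to produce a quotient map $G\to H$. It instead adjoins one square root at a time, observing that $K=\langle y,x_1,\ldots,x_n\mid R,\,x_1=y^2\rangle$ is the amalgamated product $\Z\ast_{2\Z=\langle x_1\rangle}H$, and then quotes the Bludov--Glass theorem (cf.\ Bergman) that such an amalgam of left orderable groups over a cyclic subgroup is again left orderable once the orderings on the two factors are chosen to agree on the edge group; the theorem then follows by induction. Note this orderability of cyclic amalgams is a genuinely nontrivial input — it is not a consequence of Lemma~\ref{lem:extension}, precisely because the amalgam is not an extension of $H$ by a normal free subgroup. Your observation that the reduced hypothesis plus torsion--freeness of left orderable groups forces $\langle x_i\rangle\cong\Z$ is correct and is indeed needed to set up the amalgam, but the rest of your route does not reach the conclusion.
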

\begin{proof}
If $H=\langle x_1,\ldots,x_n\mid R\rangle$, set \[K=\langle y,x_1,\ldots,x_n\mid R,x_1=y^2\rangle.\] If we can prove that $K$ is left orderable then the result will follow by induction on $n$.

To this end, note that $K$ admits a description as an amalgamated product via \[K=\Z*_{2\Z=\langle x_1\rangle}H.\] Since we can order $\Z$ either positively or negatively, we may assume that the isomorphism $2\Z\cong \langle x_1\rangle$ is order preserving. Then, a result of Bludov--Glass~\cite{BludovGlass} (cf. Bergman~\cite{Bergman90}) implies that the corresponding amalgamated product is again orderable.
\end{proof}

Note that the assumption that the presentation for $H$ in Theorem~\ref{thm:formalorder} is reduced was essential, since otherwise the formal square root would contain torsion. Moreover, we need not assume that $H$ be finitely generated in Theorem~\ref{thm:formalorder}, and this hypothesis could be replaced by countable generation. We include this hypothesis since nearly all groups under consideration in this paper are finitely generated.

Finally, we show that formal square roots generally contain nonabelian free groups, so that free subgroups are in some precise sense a general phenomenon in square roots of groups of homeomorphisms:

\begin{thm}\label{thm:generalfree}
Let $H=\langle x_1,\ldots,x_n\mid R\rangle$ be a reduced presentation for a non--cyclic finitely generated left orderable group, and let $K=\langle y,x_1,\ldots,x_n\mid R,x_1=y^2\rangle$. Then $K$ contains a nonabelian free group.
\end{thm}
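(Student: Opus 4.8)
The plan is to realize $K$ as an amalgamated free product and then invoke a standard criterion for the existence of nonabelian free subgroups in such products. As in the proof of Theorem~\ref{thm:formalorder}, we have the decomposition
\[
K=\Z *_{2\Z=\langle x_1\rangle}H,
\]
where the edge group $2\Z$ sits inside $\langle y\rangle\cong\Z$ with index $2$, and inside $H$ as the subgroup generated by $x_1$. The key observation is that $H$ is non-cyclic and left orderable, hence in particular torsion-free, so $\langle x_1\rangle$ is a proper subgroup of $H$: indeed, if we had $H=\langle x_1\rangle$, then $H$ would be cyclic, contrary to hypothesis. Thus both vertex groups properly contain the amalgamated edge subgroup: the index of $\langle x_1\rangle$ in $\Z=\langle y\rangle$ is $2$, and the index of $\langle x_1\rangle$ in $H$ is strictly greater than $1$.

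With both factors strictly larger than the edge group, one applies the classical fact (a consequence of the structure theory of groups acting on trees, e.g.\ Bass--Serre theory; see also the ping-pong lemma applied to the Bass--Serre tree) that an amalgamated free product $A*_C B$ contains a nonabelian free subgroup as soon as $[A:C]\geq 2$ and $[B:C]\geq 3$, or more symmetrically, whenever $A*_C B$ is not virtually cyclic and not an extension of $C$ by a group of order $\le 2$ on each side —- the only exceptional cases being $C$ of index $2$ in both factors (which gives a group with an infinite dihedral quotient but still often contains free subgroups) and a few small cases. Here $[\langle y\rangle:\langle x_1\rangle]=2$; if $[H:\langle x_1\rangle]\geq 3$ we are immediately done. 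The remaining case is $[H:\langle x_1\rangle]=2$, i.e.\ $H$ has a cyclic subgroup of index $2$. But a left orderable group with a cyclic subgroup of index $2$ must itself be cyclic: the index-$2$ subgroup is normal, the quotient is $\Z/2\Z$, and a left orderable group has no nontrivial finite quotients compatible with... — more carefully, a torsion-free group which is virtually $\Z$ is itself $\Z$ (it is a torsion-free virtually cyclic group, hence $\Z$). This contradicts non-cyclicity of $H$. Therefore $[H:\langle x_1\rangle]\geq 3$, and $K=\Z*_{2\Z}H$ contains a nonabelian free group.

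The main obstacle is making precise the combinatorial group theory input: one must cite or prove the correct statement about when $A*_C B$ contains $\mathbf{F_2}$, being careful about the borderline $[A:C]=[B:C]=2$ situation. The cleanest route is to note that since $[H:\langle x_1\rangle]\geq 3$, we may pick $h\in H$ with $h,x_1 h$ in distinct nontrivial cosets, and then the elements $yhy^{-1}$ and $y x_1 h\, y^{-1}$ (or a similar explicit pair) play ping-pong on the Bass--Serre tree of the splitting, their axes being translated far enough apart by the index hypothesis to generate a free group of rank two. Carrying out the ping-pong estimate on the tree, using normal forms in the amalgamated product, is routine once the index bound $[H:\langle x_1\rangle]\geq 3$ is in hand, which is the crux established above via left orderability.
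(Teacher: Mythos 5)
Your proof is correct and takes a genuinely different route from the paper. Both begin from the amalgam decomposition $K = \Z *_{2\Z = \langle x_1\rangle} H$, but the paper exhibits an explicit free subgroup: it chooses $z \in H \setminus \langle x_1\rangle$ and argues directly via the normal form theorem for amalgamated products that $\langle z, yzy^{-1}\rangle$ is free of rank two. You instead establish the index bound $[H : \langle x_1\rangle] \geq 3$---using that a left orderable (hence torsion-free) group with a cyclic subgroup of index at most $2$ is itself infinite cyclic---and then invoke the standard criterion that an amalgam $A *_C B$ contains a nonabelian free group whenever $[A:C]\geq 2$ and $[B:C]\geq 3$. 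Your route is in a sense more robust: for the paper's normal form argument to run, the syllables $z^{n_i}$ (for all nonzero $n_i$) must lie outside the amalgamated subgroup, which requires $\langle z\rangle \cap \langle x_1\rangle = 1$, not merely $z\notin\langle x_1\rangle$, and this stronger condition is left tacit; your index bound sidesteps the choice of $z$ entirely, since $[H:\langle x_1\rangle]\geq 3$ is exactly what makes the Bass--Serre tree fail to be a line. The trade-off is that the paper's approach is more self-contained, producing a concrete free pair rather than citing a black-box amalgam criterion. One small caveat on your write-up: the middle paragraph hedges imprecisely about the $[A:C]=[B:C]=2$ borderline case (``still often contains free subgroups''---in fact that case can genuinely fail, as the Klein bottle group $\langle a,b\mid a^2=b^2\rangle = \Z *_{2\Z}\Z$ shows), but since you correctly rule that case out via torsion-freeness, which is the essential step, the logic is unaffected.
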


Thus, Theorem~\ref{thm:generalfree} implies that the formal square root of a non--cyclic group always contains nonabelian free groups.

\begin{proof}[Proof of Theorem~\ref{thm:generalfree}]
The result follows from general Bass--Serre theory. One can construct free subgroups explicitly using the standard theory of
 normal forms for amalgamated products
 (see~\cite{Trees,LS2001} for general introductions to combinatorial group theory and in particular Theorem 2.6 in \cite{LS2001} for the normal form theorem for amalgamated products). 
 To do this, let $z\in H\setminus\langle x_1\rangle$, which exists since $H$ is assumed not to be cyclic. Note that $z$ has infinite order since $H$ is left orderable. Consider the group $\langle z,yzy^{-1}\rangle$. An arbitrary word in these generators will be of the form \[z^{n_1}yz^{m_1}y^{-1}\cdots z^{n_k}yz^{m_k}y^{-1},\] where all these exponents are nonzero except possibly $n_1$ and $m_k$. This word cannot collapse to the identity since it is in normal form. It follows that the group $\langle z,yzy^{-1}\rangle$ is free.
\end{proof}

M. Kassabov has pointed out to the authors that if $H$ is given a presentation with at least three generators then the formal square root $G$ of $H$ surjects onto \[\Z/2\Z*\Z/2\Z*\Z/2\Z,\] which contains a nonabelian free group.

Finally, we remark that there appears to be little general compatibility between formal square roots of groups and ``dynamical" square roots of groups, such as in our discussion of square roots of $F$ in this paper. That is, let $G=\langle f,g\rangle\in\mathcal{S}$ be a square root of $F$, so that the supports of $f$ and $g$ form a two--chain. Then it is never the case that $G$ is the formal square root of $F$. Indeed, by Theorem~\ref{thm:generalfree}, we would have that $g$ and $h=f^{-1}gf$ generate a free group. This cannot happen, since there is an endpoint $x$ of $\supp g$ which is fixed by both $g$ and $h$, and where the germs of these two homeomorphisms at $x$ agree. By conjugating $g$ or $h$ suitably, one can obtain a homeomorphism $k$ such that $\supp k$ is contained in a neighborhood of $x$ on which $g$ and $h$ agree. But then $g^{-1}kg=h^{-1}kh$, violating the fact that $\langle g,h\rangle$ is free.

If on the other hand $f$ and $g$ are fully supported homeomorphisms, then fairly easy Baire Category methods as in Proposition 4.5 of~\cite{Ghys2001} show that by choosing generic square roots of $f$ and $g$, one obtains a group which is isomorphic to the formal square root of $\langle f,g\rangle$.

%%%%%%%%%%%%%%%%%%%%%%%%%%%%%%%%%%%%%%%%%%%%%%%%%%%%%%%%%%%%%%%

%%%%%%%%%%%%%%%%%%%%%%%%%%%%%%%%%%%%%%%%%%%%%%%%%%%%%%%%%%%%%%%

%%%%%%%%%%%%%%%%%%%%%%%%%%%%%%%%%%%%%%%%%%%%%%%%%%%%%%%%%%%%%%

\section*{Acknowledgements}
The authors thank C. Bleak, M. Kassabov, J. Moore, and D. Osin for helpful discussions. The authors thank M. Brin for pointing out Corollary~\ref{cor:pl}. The authors are particularly grateful to M. Sapir for suggesting the content of Section~\ref{sec:generalroot}. The authors thank an anonymous referee for providing a large number of helpful comments. The first author was partially supported by Simons Foundation Collaboration Grant number 429836, and is partially supported by an Alfred P. Sloan Foundation Research Fellowship and by NSF Grant DMS-1711488. The second author has been supported by an EPFL-Marie Curie fellowship and the Swiss National Science Foundation Grant ``Ambizione" PZ00P2\_174137.

%%%%%%%%%%%%%%%%%%%%%%%%%%%
% END of body
%%%%%%%%%%%%%%%%%%%%%%%%%%%

\bibliographystyle{amsplain}

\begin{thebibliography}{10}

\bibitem{Bergman90}
George~M. Bergman, \emph{Ordering coproducts of groups and semigroups}, J.
  Algebra \textbf{133} (1990), no.~2, 313--339. \MR{1067409}

\bibitem{BieriStrebel16}
Robert Bieri and Ralph Strebel, \emph{On groups of {PL}-homeomorphisms of the
  real line}, Mathematical Surveys and Monographs, vol. 215, American
  Mathematical Society, Providence, RI, 2016. \MR{3560537}

\bibitem{BBKMZ16}
Collin Bleak, Matthew~G. Brin, Martin Kassabov, Justin~Tatch Moore, and
  Matthew~C.B. Zaremsky, \emph{Groups of fast homeomorphisms of the interval
  and the ping-pong argument}, J. Comb. Algebra, to appear.

\bibitem{BludovGlass}
V.~V. Bludov and A.~M.~W. Glass, \emph{On free products of right ordered groups
  with amalgamated subgroups}, Math. Proc. Cambridge Philos. Soc. \textbf{146}
  (2009), no.~3, 591--601. \MR{2496345}

\bibitem{BrinJLMS99}
Matthew~G. Brin, \emph{The ubiquity of {T}hompson's group {$F$} in groups of
  piecewise linear homeomorphisms of the unit interval}, J. London Math. Soc.
  (2) \textbf{60} (1999), no.~2, 449--460. \MR{1724861}
  
 \bibitem{BrinPersonal}
 Matthew~G. Brin, personal communication, 2008.

\bibitem{BS1985}
Matthew~G Brin and Craig~C Squier, \emph{{Groups of piecewise linear
  homeomorphisms of the real line}}, Inventiones Mathematicae \textbf{79}
  (1985), no.~3, 485--498.

\bibitem{Brown1985}
Kenneth~S. Brown, \emph{Finiteness properties of groups}, Proceedings of the
  {N}orthwestern conference on cohomology of groups ({E}vanston, {I}ll., 1985),
  vol.~44, 1987, pp.~45--75. \MR{885095}

\bibitem{BrownGeoghegan}
Kenneth~S. Brown and Ross Geoghegan, \emph{An infinite-dimensional torsion-free
  {${\rm FP}_{\infty }$}\ group}, Invent. Math. \textbf{77} (1984), no.~2,
  367--381. \MR{752825}

\bibitem{BurilloBook}
Jos{\'e} Burillo, \emph{Thompson's group {F}}, 2016.

\bibitem{Calegari2007}
Danny Calegari, \emph{Foliations and the geometry of 3-manifolds}, Oxford
  Mathematical Monographs, Oxford University Press, Oxford, 2007. \MR{2327361
  (2008k:57048)}

\bibitem{CFP1996}
J.~W. Cannon, W.~J. Floyd, and W.~R. Parry, \emph{Introductory notes on
  {R}ichard {T}hompson's groups}, Enseign. Math. (2) \textbf{42} (1996),
  no.~3-4, 215--256. \MR{1426438}

\bibitem{CFP96}
\bysame, \emph{Introductory notes on {R}ichard {T}hompson's groups}, Enseign.
  Math. (2) \textbf{42} (1996), no.~3-4, 215--256. \MR{1426438}

\bibitem{MR1786869}
Pierre de~la Harpe, \emph{Topics in geometric group theory}, Chicago Lectures
  in Mathematics, University of Chicago Press, Chicago, IL, 2000. \MR{1786869
  (2001i:20081)}

\bibitem{DDRW08}
Patrick Dehornoy, Ivan Dynnikov, Dale Rolfsen, and Bert Wiest, \emph{Ordering
  braids}, Mathematical Surveys and Monographs, vol. 148, American Mathematical
  Society, Providence, RI, 2008. \MR{2463428 (2009j:20046)}

\bibitem{DeroinNavasRivas}
B.~Deroin, A.~Navas, and C.~Rivas, \emph{Groups, orders, and dynamics},
  To appear, 2019.

\bibitem{FF2003}
Benson Farb and John Franks, \emph{Groups of homeomorphisms of one-manifolds.
  {III}. {N}ilpotent subgroups}, Ergodic Theory Dynam. Systems \textbf{23}
  (2003), no.~5, 1467--1484. \MR{2018608 (2004k:58013)}

\bibitem{FM2012}
Benson Farb and Dan Margalit, \emph{A primer on mapping class groups},
  Princeton Mathematical Series, vol.~49, Princeton University Press,
  Princeton, NJ, 2012. \MR{2850125 (2012h:57032)}

\bibitem{Ghys2001}
{\'E}tienne Ghys, \emph{Groups acting on the circle}, Enseign. Math. (2)
  \textbf{47} (2001), no.~3-4, 329--407. \MR{1876932 (2003a:37032)}

\bibitem{GS1987}
{\'E}tienne Ghys and Vlad Sergiescu, \emph{Sur un groupe remarquable de
  diff\'eomorphismes du cercle}, Comment. Math. Helv. \textbf{62} (1987),
  no.~2, 185--239. \MR{896095 (90c:57035)}

\bibitem{Higman}
Graham Higman, \emph{On infinite simple permutation groups}, Publ. Math.
  Debrecen \textbf{3} (1954), 221--226 (1955). \MR{0072136}

\bibitem{HigmanBook}
\bysame, \emph{Finitely presented infinite simple groups}, Department of Pure
  Mathematics, Department of Mathematics, I.A.S. Australian National
  University, Canberra, 1974, Notes on Pure Mathematics, No. 8 (1974).
  \MR{0376874}

\bibitem{KKL16}
Sang-hyun Kim, Thomas Koberda, and Yash Lodha, \emph{Chain groups of
  homeomorphisms of the interval}, Ann. Sci. \'Ec. Norm. Sup\'er., to appear.

\bibitem{LS2001}
R.~C. Lyndon and P.~E. Schupp, \emph{Combinatorial group theory}, Classics in
  Mathematics, Springer-Verlag, Berlin, 2001, Reprint of the 1977 edition.
  \MR{1812024 (2001i:20064)}

\bibitem{Navas2011}
Andr{{\'e}}s Navas, \emph{Groups of circle diffeomorphisms}, spanish ed.,
  Chicago Lectures in Mathematics, University of Chicago Press, Chicago, IL,
  2011. \MR{2809110}

\bibitem{PT1976}
J.~F. Plante and W.~P. Thurston, \emph{Polynomial growth in holonomy groups of
  foliations}, Comment. Math. Helv. \textbf{51} (1976), no.~4, 567--584.
  \MR{0436167 (55 \#9117)}

\bibitem{Trees}
Jean-Pierre Serre, \emph{Trees}, Springer Monographs in Mathematics,
  Springer-Verlag, Berlin, 2003, Translated from the French original by John
  Stillwell, Corrected 2nd printing of the 1980 English translation.
  \MR{1954121}
\end{thebibliography}

\end{document}